\setlist[enumerate]{itemsep=0mm,parsep=2mm,topsep=5pt,label=\textit{(\alph*)}}
\setlist[itemize]{itemsep=1mm,parsep=2mm,topsep=6pt}
\newcommand\myshade{85}
\colorlet{myurlcolor}{Aquamarine}
\definecolor{mycitecolor}{HTML}{5590B4}
\crefname{equation}{}{}
\theoremstyle{definition}
\newtheorem{theorem}{Theorem}[section]
\newtheorem{lemma}[theorem]{Lemma}
\newtheorem{claim}[theorem]{Claim}
\newtheorem{proposition}[theorem]{Proposition}
\DeclareMathOperator{\cl}{cl}
\DeclareMathOperator{\rank}{rank}
\newcommand{\MM}{\mathcal{M}}
\newcommand{\NN}{\mathbb{N}}
\newcommand{\RR}{\mathbb{R}}
\newcommand{\CC}{\mathcal{C}}
\newcommand{\extendableconstant}{r_0}
\renewcommand{\deg}{d}
\newcommand{\dimensionality}[1]{d_{#1}}
\newcommand{\threshold}[1]{t_{#1}}
\newcommand{\graphicmatroid}{\mathcal{G}}
\newcommand{\evencycle}{\mathcal{E}}
\newcommand{\bicircular}{\mathcal{B}}
\title{Rigidity and reconstruction in matroids of highly connected graphs}
\author{Dániel Garamvölgyi\thanks{HUN-REN-ELTE Egerváry Research Group on Combinatorial Optimization and the MTA-ELTE Momentum Matroid Optimization Research Group, Pázmány Péter sétány 1/C, 1117 Budapest, Hungary, and the HUN-REN Alfréd Rényi Institute of Mathematics, Reáltanoda utca 13-15, 1053 Budapest, Hungary. e-mail: \texttt{daniel.garamvolgyi@ttk.elte.hu}}}
\date{}
\begin{document}

\maketitle
\begin{abstract}
    A \emph{graph matroid family} $\MM$ is a family of matroids $\MM(G)$ defined on the edge set of each finite graph $G$ in a compatible and isomorphism-invariant way. We say that $\MM$ has the \emph{Whitney property} if there is a constant $c$ such that every $c$-connected graph $G$ is uniquely determined by $\MM(G)$. Similarly, $\MM$ has the \emph{Lovász-Yemini property} if there is a constant $c$ such that for every $c$-connected graph $G$, $\MM(G)$ has maximal rank among graphs on the same number of vertices.

    We show that if $\MM$ is unbounded (that is, there is no absolute constant bounding the rank of $\MM(G)$ for every $G$), then $\MM$ has the Whitney property if and only if it has the Lovász-Yemini property. We also give a complete characterization of these properties in the bounded case. As an application, we show that if some graph matroid families have the Whitney property, then so does their union. Finally, we show that every $1$-extendable graph matroid family has the Lovász-Yemini (and thus the Whitney) property. These results unify and extend a number of earlier results about graph reconstruction from an underlying matroid.
\end{abstract}

\section{Introduction}

Matroids arise as an abstraction of linear independence in vector spaces and cycle-freeness in graphs. In addition to their intrinsic appeal, matroids have become a valuable tool in discrete mathematics and combinatorial optimization, for the same reason as many successful abstractions: they distill a fundamental concept -- in this case, the notion of linear independence -- to its essence, while discarding what is only incidental. 
Nonetheless, it is natural to wonder how much information is lost in this process of abstraction.

This question was answered by Whitney~\cite{whitney_1933} for graphs and their graphic matroids. More precisely, Whitney gave a characterization of the edge maps between two graphs that induce isomorphisms between their respective graphic matroids. It follows as a corollary to his result that $3$-connected graphs are uniquely determined by their graphic matroids, in a certain (strong) sense.
In other words, every $3$-connected graph is \emph{reconstructible} from its graphic matroid.\footnote{Let us stress that by reconstructibility, we mean only that the problem of reconstructing the graph from the matroid has a unique solution, and not that we can also find this solution efficiently. For graphic matroids, there is, in fact, an efficient algorithm that reconstructs the graph from its matroid, due to Seymour~\cite{seymour_1981}.}

Thus we have a statement of the form ``every sufficiently highly connected graph is reconstructible from $\MM(G)$,'' where in the case of Whitney's result, $\MM(G)$ denotes the graphic matroid of $G$. Similar results are known when $\MM(G)$ is the bicircular matroid of $G$~\cite{wagner_1985}, the $2$-dimensional generic rigidity matroid of $G$~\cite{jordan.kaszanitzky_2013}, or more generally, the $(k,\ell)$-count matroid of $G$ for $k \geq 1$ and $\ell \leq 2k-1$~\cite{garamvolgyi.etal_2024}. Apart from count matroids, the only known positive result of this type is for the generic $C^1_
2$-cofactor matroid of $G$~\cite{garamvolgyi.etal_2024}. 

The goal of this paper is the general investigation of reconstructibility problems for families of matroids associated to graphs. To make sense of this problem, we have to clarify what we mean by such a family of matroids. Roughly speaking, a \emph{graph matroid family} $\MM$ is a family of matroids $\MM(G)$, defined on the edge set of each finite graph $G$, that is isomorphism-invariant (i.e., if $G$ and $H$ are isomorphic, then so are $\MM(G)$ and $\MM(H)$), and compatible (i.e., if $H$ is a subgraph of $G$, then $\MM(H)$ is a restriction of $\MM(G)$). We say that a graph matroid family $\MM$ has the \emph{Whitney property (with constant $c$)} if there exists a constant $c$ such that every $c$-connected graph $G$ is uniquely determined by the corresponding matroid $\MM(G)$. (Precise definitions are given in the following sections.) Using this terminology, each result in the previous paragraph states that a given graph matroid family has the Whitney property.

It turns out that whether a graph matroid family $\MM$ has the Whitney property is closely tied to whether the matroid $\MM(G)$ has high rank for every sufficiently highly connected graph $G$. To make this connection more precise, let us say that a graph $G$ is \emph{$\MM$-rigid} if $\MM(G)$ has maximal rank among graphs on the same number of vertices. We say that $\MM$ has the \emph{Lovász-Yemini property (with constant $c$)} if there is a nonnegative integer $c$ such that every $c$-connected graph is $\MM$-rigid.

At first glance, the Whitney and the Lovász-Yemini properties may seem unrelated, and one might expect that little could be said about either in such a general setting. Nonetheless, it turns out that the two properties are, in fact, equivalent for almost all graph matroid families. Let us say that a graph matroid family $\MM$ is \emph{unbounded} if the rank of $\MM(K_n)$ is unbounded as a sequence of $n$, where $K_n$ denotes the complete graph on $n$ vertices. Our main result is the following theorem.

\begin{theorem}\label{theorem:main}
    An unbounded graph matroid family $\MM$ has the Whitney property if and only if it has the Lovász-Yemini property. 
\end{theorem}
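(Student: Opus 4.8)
The plan is to establish the two implications separately. Both directions use the function $f(n) := \mathrm{rank}(\MM(K_n))$, which by compatibility is nondecreasing in $n$ and, since $\MM$ is unbounded, tends to infinity; and the observation that if $G$ is $c'$-connected then $G$ itself, together with every graph obtained from it by deleting a bounded number of vertices, stays highly connected, so that (under the Lovász-Yemini property) all of these are $\MM$-rigid.

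\textbf{Lovász-Yemini $\Rightarrow$ Whitney.} Suppose $\MM$ has the Lovász-Yemini property with constant $c$; I aim to show it has the Whitney property with a sufficiently large constant $c'$. Let $G$ be $c'$-connected on $n$ vertices. Then $\mathrm{rank}(\MM(G)) = f(n)$, and for every vertex $v$ the matroid deletion $\MM(G)\setminus E_v$ (where $E_v$ is the set of edges at $v$) equals $\MM(G-v)$, of rank $f(n-1)$; similarly every $\MM(G-W)$ with $|W|$ bounded is $\MM$-rigid. The crux is to recognise, purely from the abstract matroid $\MM(G)$, which subsets of the ground set are the stars $E_v$: I would characterise them as the inclusion-minimal subsets $S$ whose deletion produces the ``next'' rigid matroid $\MM(K_{n-1})$ (i.e.\ such that $\mathrm{rank}(\MM(G)\setminus S)$ drops to the appropriate value and $\MM(G)\setminus S$ keeps the relevant connectivity), and then argue that in a sufficiently highly connected graph no \emph{other} minimal set behaves this way -- the only conceivable competitors would come from small edge cuts, which are excluded by connectivity. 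Having identified the stars, one reads off the incidences ($e$ is incident to ``$v$'' iff $e\in E_v$), checks that each edge lies in exactly two stars and that the intersection pattern of the stars is that of a graph, and thereby recovers a graph isomorphic to $G$. Applied to an isomorphism $\MM(G)\cong\MM(H)$ of two $c'$-connected graphs, this forces the isomorphism to carry stars to stars, hence to be induced by a graph isomorphism.

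\textbf{Whitney $\Rightarrow$ Lovász-Yemini.} I would argue contrapositively: assuming $\MM$ fails the Lovász-Yemini property, I produce, for every $c$, two non-isomorphic $c$-connected graphs with isomorphic $\MM$-matroids, contradicting the Whitney property. Fix a $c$-connected graph $G$ (with $c$ large) that is not $\MM$-rigid, say on $n$ vertices with $\mathrm{rank}(\MM(G)) = r < f(n)$. The idea is to exploit the rank deficiency $f(n)-r\ge 1$ as slack that permits a nontrivial rewiring of $G$ leaving its matroid unchanged: a non-$\MM$-rigid highly connected graph contains a ``flexible'' portion, and one can reorganise it -- by splitting a high-degree vertex, or by re-routing a bounded number of edges through a fresh vertex -- in two inequivalent ways that still yield isomorphic matroids (here unboundedness lets one realise the common matroid, which no longer pins down the vertex count, on two graphs of possibly different sizes). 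High connectivity of $G$ guarantees the two modified graphs remain highly connected, and one verifies they are not isomorphic.

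The main obstacle, in both directions, is the tight interplay between \emph{graph} connectivity and \emph{matroid} structure: in the first direction, isolating exactly the matroid-theoretic condition that singles out the vertex-stars (and controlling degenerate behaviour of $f$, e.g.\ where it is locally constant); in the second, making the rewiring construction explicit and proving it simultaneously preserves the matroid and alters the isomorphism type. Quantifying ``sufficiently highly connected'' in terms of the Lovász-Yemini constant is where the bulk of the work lies.
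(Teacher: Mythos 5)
Your high-level plan matches the structure of the paper's proof (two implications), but both directions contain genuine gaps that the paper's actual argument is built precisely to overcome.

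\textbf{Lovász-Yemini $\Rightarrow$ Whitney.} The crucial difficulty you never resolve is that $\MM$-reconstructibility quantifies over \emph{all} graphs $H$ admitting a matroid isomorphism $\psi\colon E(G)\to E(H)$, not just over $c'$-connected ones. Your concluding step (``Applied to an isomorphism $\MM(G)\cong\MM(H)$ of two $c'$-connected graphs...'') silently assumes $H$ shares the connectivity of $G$, which is exactly what has to be proven. This is where the paper introduces \emph{vertical connectivity} of $\MM(G)$ as an intermediary: high vertex-connectivity of $G$ plus vertex-redundant $\MM$-rigidity forces $\MM(G)$ to be highly vertically connected (Proposition~\ref{theorem:vertexredundantrigidtovertconn}); since vertical connectivity is a matroid invariant, $\MM(H)$ inherits it; and high vertical connectivity of $\MM(H)$ forces $H$ to be highly vertex-connected (Proposition~\ref{theorem:vertconntoconn}). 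Without some such bridge, you cannot ``read off'' stars in $H$ at all, because $H$ could in principle be highly disconnected or have tiny cuts. Your proposed characterisation of stars as inclusion-minimal sets whose deletion drops the rank appropriately ``and keeps the relevant connectivity'' implicitly invokes the notion that is missing, and the parenthetical about small edge cuts ``excluded by connectivity'' does not apply to $H$, whose connectivity is unknown. The paper in fact identifies \emph{complements} of stars rather than stars directly, via the fact that closed sets of the right rank in a rigid matroid correspond to induced rigid subgraphs.

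\textbf{Whitney $\Rightarrow$ Lovász-Yemini.} Your proposed ``rewiring'' construction (splitting a vertex, rerouting edges through a fresh vertex) is left entirely unspecified, and as stated it is not clear it can be made to work: modifying a non-rigid graph in a way that provably preserves the matroid yet changes the isomorphism type is nontrivial in general. The paper's actual argument here is both simpler and different. If $G$ is $c$-connected but not $\MM$-rigid, then there are nonadjacent $u,v$ with $uv$ a bridge of $\MM(G+uv)$; one then observes (Lemma~\ref{lemma:reconstructiblebridge}) that in any unbounded family the presence of a bridge immediately kills reconstructibility, because replacing the bridge $e$ by a pendant edge $vu_1$ or by an isolated edge $u_1u_2$ (on new vertices $u_1,u_2$) yields, via Lemma~\ref{lemma:0extension}, graphs with isomorphic matroids but different vertex counts, so they cannot both be isomorphic to $G$. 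No ``slack'' bookkeeping and no rank arithmetic is needed. You should look at the bridge-based argument; it replaces your vague rewiring step with a two-line construction.

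In summary, the proposal correctly identifies the two implications but leaves the hard step in each unsolved: in one direction, transferring connectivity from $G$ to $H$ through the matroid (the paper's vertical-connectivity machinery), and in the other, an explicit construction certifying non-reconstructibility (the paper's bridge-replacement lemma).
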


Let us comment on the constants achieved by our theorem. It is easy to show that if an unbounded graph matroid family $\MM$ has the Whitney property with some constant, then it has the Lovász-Yemini property with the same constant (\cref{lemma:whitneytoLY}). Hence the main content of \cref{theorem:main} is the other implication. In this direction, we show that if $\MM$ has the Lovász-Yemini property with constant $c$, then it has the Whitney property with constant \[c' = \max(c,t) + \rank(\MM(K_{\max(c,t)-1})) + 2,\] where $t$ is a certain parameter of $\MM$ which we call its \emph{threshold}, and $K_n$ denotes the complete graph on $n$ vertices (\cref{theorem:mainprecise}). We do not know how tight this bound is in general. It is possible that $c' = c + k$ suffices for some constant $k$; perhaps for $k=2$, which would be best possible, as shown by Whitney's theorem for the graphic matroid.

In the bounded case, the equivalence between the Lovász-Yemini and the Whitney properties breaks down: every bounded graph matroid family has the Lovász-Yemini property (\cref{lemma:boundedLY}), but not all of them have the Whitney property. For completeness, we give a combinatorial characterization of those that do (\cref{theorem:boundedwhitney}).

As an application of \cref{theorem:main}, we prove that taking unions preserves the Lovász-Yemini and the Whitney properties.

\begin{theorem}\label{theorem:unionWhitney}
    Let $\MM$ be the union of the graph matroid families $\MM_1,\ldots,\MM_k$. 
    \begin{enumerate}
        \item If each of $\MM_1,\ldots,\MM_k$ has the Lovász-Yemini property, then so does $\MM$.
        \item If each of $\MM_1,\ldots,\MM_k$ has the Whitney property, then so does $\MM$.
    \end{enumerate}
\end{theorem}

\cref{theorem:main} further motivates the investigation of the Lovász-Yemini property.
Showing that a particular graph matroid family has the Lovász-Yemini property may be interesting and nontrivial even in cases when the rank function of the matroid $\MM(G)$ can be computed efficiently. We derive our terminology from a result of Lovász and Yemini~\cite{lovasz.yemini_1982}, who showed that every $6$-connected graph $G$ is $\mathcal{R}_2$-rigid, where $\mathcal{R}_2$ is the family of $2$-dimensional generic rigidity matroids. Thus, the graph matroid family $\mathcal{R}_2$ has the Lovász-Yemini property with constant $6$. In fact, the first theorem in this vein is due to Nash-Williams~\cite{nash-williams_1961} and Tutte~\cite{tutte_1961}, who showed that every $2k$-edge-connected graph contains $k$ edge-disjoint spanning trees. This implies that the $k$-fold union of the graphic matroid has the Lovász-Yemini property with constant $2k$. 

In a recent series of breakthroughs, Clinch, Jackson and Tanigawa~\cite{clinch.etal_2022a} proved a similar result for the generic $C^1_2$-cofactor matroid (which is conjectured to be the same as the $3$-dimensional generic rigidity matroid), and then Villányi~\cite{villanyi_2023} established analogous results for all $1$-extendable abstract rigidity matroids, a family which includes both the $d$-dimensional generic rigidity matroid and the generic $C^{d-2}_{d-1}$-cofactor matroid, for all $d$. 
Extensions to unions of rigidity matroids and to count matroids were given in~\cite{cheriyan.etal_2014,garamvolgyi.etal_2024,garamvolgyi.etal_2024a, jordan_2005}.

The proof method of Villányi is especially notable as it is the first example of a result of this type that is not based on a combinatorial characterization of the rank function of the corresponding matroid. Indeed, finding such a characterization for the generic $d$-dimensional rigidity matroid is a major open problem in the $d \geq 3$ case.

Following the ideas of Villányi, we prove a rather general sufficient condition for the Lovász-Yemini property. We first need some additional terminology. Let $\MM$ be a graph matroid family. We say that a graph $G$ is \emph{$\MM$-independent} if $\MM(G)$ is the free matroid on $E(G)$. The \emph{dimensionality} of $\MM$ is the supremum of the numbers $d$ such that the operation of adding a vertex of degree $d$ preserves $\MM$-independence. For a positive integer $d$, a \emph{$d$-dimensional edge split operation} consists of replacing an edge $uv$ of a graph $G$ with
a new vertex joined to $u$ and $v$, as well as to $d-1$ other vertices of $G$. Finally, a graph matroid family $\MM$ is \emph{$1$-extendable} if its dimensionality $d$ is finite and positive and the $d$-dimensional edge split operation preserves $\MM$-independence.

Using this terminology, we give the following sufficient condition for the Lovász-Yemini property.

\begin{theorem}\label{theorem:1extendable}
    Let $\MM$ be a graph matroid family. If $\MM$ is $1$-extendable, then it has the Lovász-Yemini (and hence the Whitney) property.
\end{theorem}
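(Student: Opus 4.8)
My plan is to adapt the inductive method of Villányi. Fix a $1$-extendable graph matroid family $\MM$ with dimensionality $d$, so that $1\le d<\infty$, adding a vertex of degree $d$ preserves $\MM$-independence, and the $d$-dimensional edge split preserves $\MM$-independence. Since repeatedly adding degree-$d$ vertices to $K_{d+1}$ produces $\MM$-independent graphs of unbounded rank, $\MM$ is unbounded, so by \cref{theorem:main} the Lovász-Yemini property will also yield the Whitney property; it therefore suffices to prove the Lovász-Yemini property. Write $f(n)=r(\MM(K_n))$. Since every $n$-vertex graph is isomorphic to a subgraph of $K_n$ and $\MM$ is compatible, $f(n)$ is the maximum of $r(\MM(H))$ over all $n$-vertex graphs $H$; hence $G$ is $\MM$-rigid if and only if $r(\MM(G))=f(|V(G)|)$, and since $r(\MM(G))\le f(|V(G)|)$ always holds, one only needs the matching lower bound on $r(\MM(G))$. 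I would prove this by induction on $|V(G)|$, reducing a highly connected graph to a smaller one by a vertex-removal operation.

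The first ingredient is a rank estimate for the two reduction operations. Call the deletion of a vertex of degree at least $d$ a \emph{$0$-reduction}, and, for a vertex $v$ with at least $d+1$ neighbours, call the operation that deletes $v$ and adds an edge $xy$ between two non-adjacent neighbours $x,y$ of $v$ a \emph{$1$-reduction}. I claim that if $G^-$ arises from $G$ by a $0$- or a $1$-reduction, then $r(\MM(G))\ge r(\MM(G^-))+d$. For a $0$-reduction at $v$, take a basis $B$ of $\MM(G^-)$ and add $v$ back joined to $d$ of its neighbours; since adding a degree-$d$ vertex preserves $\MM$-independence, this graph is $\MM$-independent, and it is a subgraph of $G$ with $|B|+d$ edges. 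For a $1$-reduction with added edge $xy$: if $xy$ lies in some basis $B$ of $\MM(G^-)$, perform the $d$-dimensional edge split of $(V(G^-),B)$ at $xy$ with the new vertex joined to $x$, $y$, and $d-1$ further neighbours of $v$ in $G$ (which exist since $v$ has at least $d+1$ neighbours in $G$); by $1$-extendability this graph is $\MM$-independent, it is a subgraph of $G$, and it has $|B|-1+(d+1)=|B|+d$ edges. If $xy$ lies in no basis of $\MM(G^-)$, then $r(\MM(G^-))=r(\MM(G-v))$, and the $0$-reduction argument applied to $G-v$ finishes the case.

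The second ingredient describes the eventual behaviour of $f$: I expect $f(n)-f(n-1)=d$ for all sufficiently large $n$, equivalently that $f(n)-dn$ is eventually constant. (Note that the value $dn-\binom{d+1}{2}$ familiar from abstract rigidity matroids need not be correct here --- it already fails for the bicircular matroid --- so this requires a separate argument.) The inequality $f(n)\ge f(n-1)+d$ for $n\ge d+1$ is a $0$-extension of a basis of $\MM(K_{n-1})$. For the reverse inequality for large $n$ one genuinely uses $1$-extendability, not merely finiteness of the dimensionality: since the dimensionality is finite, $K_t$ is $\MM$-dependent for some $t\ge d+2$, and $1$-extendability implies that $1$-reductions preserve $\MM$-independence. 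Starting from a densest $\MM$-independent subgraph of $K_n$ and repeatedly removing a degree-at-most-$d$ vertex or performing a $1$-reduction, one peels off at most $d$ edges per vertex removed, down to an ``irreducible'' $\MM$-independent graph whose components can be analysed directly and shown to have at most $d$ edges per vertex up to a bounded additive error; this gives $f(n)\le dn+O(1)$, which together with the monotonicity of $f(n)-dn$ forces stabilisation. I expect this step to require a careful, but purely local, case analysis.

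The third, and I expect most difficult, ingredient is a purely graph-theoretic reduction lemma of the kind underlying the arguments of Lovász and Yemini and of Villányi: there is a constant $c$ depending only on $d$ (possibly enlarged so as to exceed the stabilisation threshold above) such that every $c$-connected graph $G$ with at least $c+2$ vertices admits a vertex at which a $0$-reduction or a $1$-reduction produces a $c$-connected graph on $|V(G)|-1$ vertices. A useful starting observation is that if $G-v$ is not $c$-connected, then a minimum cut $C$ of $G-v$ has size $c-1$ and $v$ must have a neighbour in every component of $G-v-C$ --- otherwise $C$ would already separate $G$ --- so the components can be reconnected using edges inside $N_G(v)$. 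Distilling this down to a single $1$-reduction while handling all minimum cuts of $G-v$ simultaneously, and making $c$ explicit in $d$, is the delicate part; in the cases where one added edge does not restore $c$-connectivity one needs either a more careful choice of reduction or a short extension of the first ingredient. Granting the lemma, the theorem follows by induction on $n=|V(G)|$: for $n\le c+1$ a $c$-connected graph is necessarily complete and hence $\MM$-rigid; for $n\ge c+2$, the lemma produces a $c$-connected $G^-$ on $n-1$ vertices, which is $\MM$-rigid by induction, so $r(\MM(G^-))=f(n-1)$, whence $r(\MM(G))\ge f(n-1)+d=f(n)$ by the first two ingredients, and $r(\MM(G))=f(n)$ since the opposite inequality is automatic; that is, $G$ is $\MM$-rigid.
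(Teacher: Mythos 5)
Your overall framework --- induction on $|V(G)|$, reducing via vertex removal, and using $1$-extendability to bound ranks --- is the right idea, and your first ingredient (that $0$- and $1$-reductions satisfy $r(\MM(G)) \geq r(\MM(G^-)) + d$) is correct and is used in the paper as well. Your second ingredient is also fine, though it requires no separate work: linearity of the rank function (\cref{lemma:ranklinear}) already holds for \emph{any} nontrivial graph matroid family, without $1$-extendability, so $f(n) - dn$ stabilizes once $n$ exceeds the threshold.

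The genuine gap is in your third ingredient, which as stated is false: it is not true that every sufficiently highly connected graph admits a vertex at which a single $0$- or $1$-reduction produces a graph of the same connectivity. Consider $K_{c,c}$, which is $c$-connected and has $2c \geq c+2$ vertices for $c \geq 2$. For any vertex $v$, the graph $K_{c,c}-v = K_{c-1,c}$ has $c$ vertices of degree $c-1$, and adding one edge between two neighbours of $v$ raises only two of these degrees, so the resulting graph still has minimum degree $c-1$ and hence is at most $(c-1)$-connected. No choice of $c$ (no matter how large) avoids this family of examples, so your reduction lemma cannot be proved and the induction does not close. You do flag this as the delicate point and suggest one might need ``a more careful choice of reduction,'' but the fix is not a small perturbation of your plan: one genuinely needs to add \emph{many} edges at once.

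The paper handles this differently. Instead of deleting $v_0$ and adding a single edge, one passes to $G' = G - v_0 + K_{N_G(v_0)}$, i.e.\ the neighbourhood of $v_0$ is completed to a clique; this always preserves $c$-connectivity. The price is that one must show $r(\MM(G')) = r(\MM(G-v_0))$, i.e.\ that none of the added edges increases the rank. This is where $1$-extendability and the choice of $v_0$ enter: $v_0$ is found via a $d$-dominating set argument (\cref{theorem:dominating}, \cref{lemma:removablevertex}) to satisfy $r(\MM(G)) \leq r(\MM(G-v_0)) + d$, and then the edge-split operation shows that if any edge $xy$ between neighbours of $v_0$ increased the rank of $G - v_0$, one could lift a basis of $G-v_0+xy$ to an $\MM$-independent subgraph of $G$ with $r(G-v_0)+d+1$ edges, contradicting the choice of $v_0$. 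Note in particular that the relevant inequality for $v_0$ goes in the \emph{opposite} direction from your first ingredient: you bound $r(G)$ from below by $r(G^-)+d$, while the removable-vertex lemma bounds it from above by $r(G-v_0)+d$, and both bounds are needed. Your proposal contains no analogue of the dominating-set step or the upper bound, which is the mechanism that makes the ``complete the neighbourhood'' step admissible.
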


It is unclear whether the converse of \cref{theorem:1extendable} is also true. Let us note that the family of even cycle matroids is not $1$-extendable and does not have the Lovász-Yemini property. See \cref{subsection:extendable} for more details.

The proofs of \cref{theorem:main,theorem:unionWhitney,theorem:1extendable} are presented in \cref{section:main}. Before that, in \cref{section:graphmatroid}, we establish some basic properties of graph matroid families. Finally, we conclude in \cref{section:concluding} by highlighting potential directions for future research.

\section{Graph matroid families}\label{section:graphmatroid}

We start by setting some notation. Throughout the paper, graphs are understood to be finite and simple, that is, without loops and parallel edges. 
To avoid some trivialities, we also (often implicitly) assume that graphs have no isolated vertices. Given a graph $G$, $V(G)$ and $E(G)$ denote the vertex set and edge set of $G$, respectively. For a subset of edges $E_0 \subseteq E(G)$, we let $G[E_0]$ denote the subgraph of $G$ induced by $E_0$; similarly, we use $G[V_0]$ for the subgraph of $G$ induced by a set of vertices $V_0 \subseteq V(G)$. For a vertex $v \in V(G)$, $N_G(v)$ denotes the set of neighbors of $v$, $\partial_G(v)$ denotes the set of edges incident to $v$, and $d_G(v) = |\partial_G(v)|$ denotes the degree of $v$ in $G$. Finally, we let $K_n$ denote the complete graph on $n$ vertices, and similarly, given a finite set $V$, $K_V$ denotes the complete graph on vertex set $V$.

A \emph{graph matroid family} $\MM$ is a family of matroids $\MM(G)$, defined on the edge set of each\footnote{To avoid set-theoretical issues, we assume that every graph considered in the paper has its vertex set taken from some fixed countably infinite set, say, $\NN$.} (finite, simple) graph $G$ in a way that is
\begin{itemize}
    \item \emph{well-defined}: every graph isomorphism $\varphi: V(G) \to V(H)$ induces an isomorphism between $\MM(G)$ and $\MM(H)$; and
    \item \emph{compatible}: for every subgraph $H$ of $G$, $\MM(H)$ is the restriction of $\MM(G)$ to $E(H)$.
\end{itemize}
In the language of infinite matroids, a graph matroid family is equivalent to a finitary matroid on the edge set of the countable complete graph $K_{\NN}$ that is invariant under automorphisms of $K_{\NN}$.

The notion of graph matroid families is rather natural, and hence it is no surprise that essentially the same concept has been investigated in the past, under multiple names: as \emph{matroidal families} by Simões-Pereira~\cite{simoes-pereira_1975}, as \emph{symmetric $2$-matroids} by Kalai~\cite{kalai_1990}, and as \emph{graph matroid limits} by Király et al.~\cite{kiraly.etal_2013}. 
Since each of these sources has a different focus and uses different terminology, we opted to give a self-contained treatment, using language inspired by that of combinatorial rigidity theory. As we will see, despite being rather general objects, graph matroid families carry a surprising amount of structure.  

Let $\MM$ be a graph matroid family.
We define the \emph{rank function} $r$ of $\MM$ by letting $r(G)$ be the rank of $\MM(G)$, for each graph $G$. By a slight abuse of notation, we also use the notation $r(E_0) = r(G[E_0])$ for a subset of edges $E_0 \subseteq E(G)$. A graph $G$ is \emph{$\MM$-independent} if $r(G) = |E(G)|$, and \emph{$\MM$-dependent} otherwise. We say that $G$ is an \emph{$\MM$-circuit} if it is not $\MM$-independent, but $G-e$ is $\MM$-independent for every edge $e \in E(G)$. Finally, $G$ is \emph{$\MM$-rigid} if $r(G) = r(K_{V(G)})$. In other words, $\MM$-rigid graphs are the ones with maximal rank among graphs on the same vertex set.

Just as a matroid is uniquely determined by its independent sets, its circuits, or its spanning sets, a graph matroid family $\MM$ is uniquely determined by the collection of $\MM$-independent graphs, the collection of $\MM$-circuits, or the collection of $\MM$-rigid graphs. Let us also highlight that an edge $e \in E(G)$ is a bridge in $\MM(G)$ if and only if $e$ is not contained in any subgraph of $G$ that is an $\MM$-circuit.

We start our investigation of graph matroid families in the next subsection by introducing the notion of dimensionality and by establishing some basic but useful combinatorial results. In \cref{subsection:boundedness}, we examine graph matroid families of dimensionality zero in some more detail. Finally, in \cref{subsection:operations} we provide many examples of graph matroid families and discuss operations for constructing new ones from existing families.

\subsection{Dimensionality}

We say that a graph matroid family $\MM$ is \emph{trivial} if every graph is $\MM$-independent; otherwise $\MM$ is \emph{nontrivial}. For a nontrivial graph matroid family $\MM$, we define its \emph{dimensionality} $\dimensionality{\MM}$ and \emph{threshold} $\threshold{\MM}$ as \[\dimensionality{\MM} = \min \left\{d: \text{there exists an } \MM\text{-circuit with minimum degree } d + 1\right\}\] 
and
\[\threshold{\MM} = \min \left\{|V(C)| - 1 : C \text{ is an } \MM\text{-circuit with minimum degree } \dimensionality{\MM} + 1\right\}.\]
Note that we always have $\dimensionality{\MM} < \threshold{\MM}$.

As far as we are aware, these definitions are new. Nonetheless, the idea underlying the definitions is implicit in a number of papers. In particular, the following lemma, which shows that the rank function of a nontrivial graph matroid family grows linearly, is a special case of a theorem of Kalai~\cite[Theorem 3.2]{kalai_1990} (see also~\cite[Theorem 10.2]{kalai_1985}), and appears independently in~\cite[Proposition 4.69]{kiraly.etal_2013}. For completeness, we provide a proof.

\begin{lemma}\label{lemma:ranklinear} \emph{(Linearity of rank.)}
    Let $\MM$ be a nontrivial graph matroid family with rank function $r$, dimensionality $\dimensionality{}$, and threshold $\threshold{}$. We have $r(K_n) = \dimensionality{}(n - \threshold{}) + r(K_{\threshold{}})$ for every $n \geq \threshold{}$.
\end{lemma}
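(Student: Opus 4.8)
The goal is to show $r(K_n) = \deg(n - \threshold{}) + r(K_{\threshold{}})$ for $n \ge \threshold{}$, where $\deg = \dimensionality{\MM}$ and $\threshold{} = \threshold{\MM}$. The natural strategy is induction on $n$, and the heart of the matter is the single-step claim: for $n > \threshold{}$, we have $r(K_n) = r(K_{n-1}) + \deg$.

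First I would establish the upper bound $r(K_n) \le r(K_{n-1}) + \deg$. Pick a vertex $v$ of $K_n$; then $K_n = K_{n-1} + v$ where $v$ has degree $n-1 \ge \threshold{} > \deg$. By submodularity / the definition of matroid rank, $r(K_n) \le r(K_{n-1}) + r(\partial(v))$ in the contraction, so it suffices to show that at most $\deg$ of the edges at $v$ are independent over $\MM(K_{n-1})$; equivalently, that among any $\deg + 1$ edges incident to $v$ together with a spanning independent set of $K_{n-1}$, there is a circuit. Here is where I would use the minimality in the definition of $\deg$: since $\deg = \dimensionality{\MM}$ is the \emph{minimum} degree of a minimum degree among $\MM$-circuits is $\deg+1$, adding a vertex of degree $\deg+1$ to \emph{any} $\MM$-independent graph must create dependence — because if it never did, one could build arbitrarily... more carefully, I would argue that adding a degree-$(\deg+1)$ vertex to an $\MM$-independent graph is never $\MM$-independent. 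Suppose for contradiction it were always independent; then iterating, every graph obtained from an edge by repeatedly adding vertices of degree $\le \deg+1$... the cleanest route: if $v$ has degree $d+1$ over an independent set and the result is independent, then in particular taking the circuit $C$ witnessing $\deg$ (minimum degree $\deg+1$), delete a minimum-degree vertex $w$ of $C$; $C - w$ is independent, and re-adding $w$ (degree $\deg+1$) gives $C$, which is dependent — so adding \emph{some} degree-$(\deg+1)$ vertex to \emph{some} independent graph creates dependence, and by the well-definedness/compatibility axioms plus the fact that any $\deg$ edges at $v$ extend to arbitrarily, one shows it always does. I'd phrase this via: in $\MM(K_{n-1} + v)$, for any independent $I \subseteq E(K_{n-1})$ and any $\deg+1$ edges at $v$, the spanned subgraph contains a copy of (a subdivision-free restriction of) the circuit $C$, hence is dependent; thus $r(K_n) \le r(K_{n-1}) + \deg$.

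Next, the lower bound $r(K_n) \ge r(K_{n-1}) + \deg$. Take a maximum $\MM$-independent subgraph $I$ of $K_{n-1}$, so $|E(I)| = r(K_{n-1})$. I want to add a new vertex $v$ joined to $\deg$ vertices of $K_{n-1}$ keeping independence. This is exactly the statement ``adding a vertex of degree $\deg$ preserves $\MM$-independence,'' which should follow from the definition of $\deg$ as the minimum degree in an $\MM$-circuit: a graph with a vertex of degree $\le \deg$ cannot be an $\MM$-circuit, and more strongly, if $H$ is $\MM$-independent and $H'$ is obtained by adding a vertex $v$ of degree $\deg$, then $H'$ is $\MM$-independent — for otherwise $H'$ contains an $\MM$-circuit $C$, which must use $v$ (as $H$ is independent), but then $d_C(v) \le \deg < \deg + 1$, contradicting that every $\MM$-circuit has minimum degree $\ge \deg + 1$. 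Hence $I + v$ with $d(v) = \deg$ is independent, giving $r(K_n) \ge r(K_{n-1}) + \deg$, and combined with the upper bound, $r(K_n) = r(K_{n-1}) + \deg$. Summing this telescoping identity from $\threshold{}+1$ to $n$ yields $r(K_n) = r(K_{\threshold{}}) + \deg(n - \threshold{})$.

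**Main obstacle.** The delicate point is the upper bound, specifically proving that adding a vertex of degree exactly $\deg + 1$ to an arbitrary $\MM$-independent graph always destroys independence, given only that \emph{some} $\MM$-circuit has minimum degree $\deg+1$. The circuit $C$ realizing the threshold witnesses this for one particular independent graph (namely $C$ minus a minimum-degree vertex, with one particular neighborhood), but we need it for the specific configuration arising in $K_n$: a maximal independent subgraph of $K_{n-1}$ plus $\deg+1$ arbitrary edges at $v$. Bridging this gap requires a genericity/transitivity argument — using well-definedness (isomorphism-invariance) to move the circuit around, and compatibility to embed it — to show that whenever $v$ has $\deg+1$ neighbors in a set spanning an independent subgraph, some circuit appears. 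I expect this to be the technically substantive step; the rest is bookkeeping.
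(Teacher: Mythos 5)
Your overall strategy — induction on $n$ via the single-step claim $r(K_n) = r(K_{n-1}) + \deg$, split into an upper and a lower bound — matches the paper's, and your lower bound argument (adding a vertex of degree $\deg$ to an $\MM$-independent graph keeps it $\MM$-independent, since any new circuit would have a vertex of degree at most $\deg$) is exactly the vertex-addition lemma used in the paper. The problem lies in the upper bound, and you have correctly flagged it as the delicate step, but the route you sketch through it does not work.

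The intermediate claim you lean on — that adding a vertex of degree $\deg+1$ to an arbitrary $\MM$-independent graph always destroys $\MM$-independence — is false. Take $\MM = \graphicmatroid$, $\deg = 1$: the graph on two disjoint edges $ab$, $cd$ is a forest, and attaching a new vertex $v$ of degree $2$ to $a$ and $c$ gives a path, still a forest. The refined version you offer, that for a maximal $\MM$-independent $I$ spanning $K_{n-1}$ and any $\deg+1$ edges $F$ at $v$, the graph $I \cup F$ contains a \emph{copy of the circuit $C$} realizing the threshold, is also not right: in the graphic case with $I$ a spanning tree of $K_{n-1}$ and $F = \{va, vb\}$, the graph $I \cup F$ contains a cycle through $v$, but that cycle need not be a triangle ($= C$), because the $a$--$b$ path in the tree $I$ can be long. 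So ``find a copy of $C$ inside a maximal independent set plus $\deg+1$ edges'' is not a valid reduction.

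The paper sidesteps this by never restricting to a maximal independent subgraph: it considers $G$ obtained from $K_{n+1}$ by deleting all but $\deg$ edges at $v$, notes $r(G) = r(K_n) + \deg$ by the vertex-addition lemma, and then shows that for each remaining edge $e$ at $v$, the graph $G + e$ contains a copy of $C$ through $e$. This copy is found inside the \emph{complete} graph $K_n$ together with the $\deg+1$ edges at $v$ in $G+e$; the completeness of $K_n$ (and $n \ge \threshold{}$) is what makes the embedding possible, and it is exactly the ingredient your argument is missing. From this, $e$ lies in the closure of $E(G)$, so adding all the missing edges at $v$ does not increase the rank, giving $r(K_{n+1}) = r(G) = r(K_n) + \deg$. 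You should replace your attempted reduction to $I \cup F$ by this closure argument over the full complete graph.
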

\begin{proof}
    Fix $n \geq \threshold{}$. It suffices to show that $r(K_{n+1}) = r(K_n) + \dimensionality{}$. Consider a vertex $v \in V(K_{n+1})$, and let $G$ be obtained from $K_{n+1}$ by deleting all but $\dimensionality{}$ edges incident to $v$. Since every $\MM$-circuit has minimum degree at least $\dimensionality{} + 1$, the edges incident to $v$ must be bridges in $\MM(G)$. It follows that $r(G) = r(K_n) + \dimensionality{}$. 
    
    Let us fix an $\MM$-circuit $C$ on $\threshold{}+1$ vertices and with minimum degree $\dimensionality{} + 1$. We have $r(G) = r(K_{n+1})$, since any edge $e \in E(K_{n+1}) - E(G)$ is contained in a copy of the $\MM$-circuit $C$ in $G+e$, and hence adding $e$ to $G$ does not increase the rank. It follows that $r(K_{n+1}) = r(K_n) + \dimensionality{}$, as desired.
\end{proof} 

The next lemma provides a useful interpretation of the definition of dimensionality.

\begin{lemma}\label{lemma:0extension} \emph{(Vertex addition.)}
    A nontrivial graph matroid family $\MM$ has dimensionality at least $\dimensionality{}$ if and only if the addition of vertices of degree $\dimensionality{}$ preserves $\MM$-independence. More generally, if $\MM$ has dimensionality $d$, $G$ is a graph, and $v \in V(G)$ is a vertex with $d_G(v) \leq \dimensionality{}$, then every edge incident to $v$ is a bridge in $\MM(G)$.
\end{lemma}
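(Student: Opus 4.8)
The plan is to reduce everything to the circuit characterization of bridges highlighted just before the lemma: an edge $e \in E(G)$ is a bridge in $\MM(G)$ if and only if no subgraph of $G$ that contains $e$ is an $\MM$-circuit. Granting this, the ``more generally'' clause is the substantive part, and the first sentence of the lemma follows from it by a short rank computation.

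First I would establish the general claim. Fix a graph $G$ and a vertex $v$ with $d_G(v) \le \dimensionality{}$, and let $e$ be an edge incident to $v$. Suppose for contradiction that $e$ is not a bridge in $\MM(G)$; then some subgraph $C$ of $G$ with $e \in E(C)$ is an $\MM$-circuit. Since $e$ is incident to $v$ and lies in $E(C)$, we have $v \in V(C)$, and evidently $d_C(v) \le d_G(v) \le \dimensionality{}$. Thus $C$ is an $\MM$-circuit whose minimum degree is at most $\dimensionality{}$, hence strictly less than $\dimensionality{} + 1$, contradicting the definition of $\dimensionality{\MM}$ as the smallest $d$ admitting an $\MM$-circuit of minimum degree $d + 1$. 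So $e$ is a bridge, which proves the second assertion.

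Next I would deduce the first assertion. It suffices to handle a single vertex addition, since adding several vertices of degree at most $\dimensionality{}$ amounts to iterating this case (each new vertex being added to the current graph). Let $G$ be $\MM$-independent and let $G'$ be obtained from $G$ by adding a new vertex $v$ with $d_{G'}(v) \le \dimensionality{}$, so that $G = G' - v$ and $E(G') = E(G) \cup \partial_{G'}(v)$. By the claim just proved, each edge of $\partial_{G'}(v)$ is a bridge (coloop) of $\MM(G')$. Peeling these coloops off one at a time — each deletion lowering the rank by exactly $1$ — and using compatibility to identify the resulting matroid, which lives on the edge set $E(G)$, with $\MM(G)$, we obtain $r(G') = r(G) + d_{G'}(v) = |E(G)| + d_{G'}(v) = |E(G')|$, so $G'$ is $\MM$-independent.

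I do not anticipate a real obstacle here: the argument is essentially a restatement of the definition of dimensionality through the lens of bridges. The only points needing mild care are that the $\MM$-circuit subgraph $C$ really does contain $v$ with $d_C(v) \le d_G(v)$ (immediate once one recalls that $G[E_0]$ carries exactly the endpoints of the edges in $E_0$), and the elementary matroid bookkeeping that a set of coloops contributes its cardinality to the rank; both are routine.
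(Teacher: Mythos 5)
The paper states this lemma without proof, treating it as an immediate reading-off from the definition of dimensionality. Your argument correctly supplies the missing details using precisely the circuit characterization of bridges that the paper highlights just before the lemma, followed by the routine coloop bookkeeping for the independence-preservation clause; it is the intended argument.
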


Finally, we have the following analogue of the so-called Gluing Lemma from combinatorial rigidity theory~\cite{whiteley_1996}.

\begin{lemma}\label{lemma:gluing} \emph{(Gluing lemma.)}
    Let $\MM$ be a nontrivial graph matroid family with dimensionality $\dimensionality{}$ and threshold $\threshold{}$. Let $G_1,G_2$ be graphs, and let $G = G_1 \cup G_2$. 
    \begin{enumerate}
        \item Suppose that $|V(G_1 \cap G_2)| \geq d$ and $|V(G_1)| \geq \threshold{}$. If $G_1$ and $G_2$ are both $\MM$-rigid, then so is $G$.
        \item Suppose that $|V(G_1 \cap G_2)| \geq t$ and $G_1 \cap G_2$ is $\MM$-rigid. If $G_1$ and $G_2$ are both $\MM$-independent, then so is $G$.
    \end{enumerate}
\end{lemma}
\begin{proof} Let $n_i = |V(G_i)|, i \in \{1,2\}$, and let $k = |V(G_1 \cap G_2)|$.
    \textit{(a)} Since $G_1$ and $G_2$ are $\MM$-rigid, we may assume that they are complete graphs. Since $n_1 \geq \threshold{}$, \cref{lemma:ranklinear} implies that
    \begin{align*}
    r(G) \leq r(K_{V(G)}) &= d(n_1 + n_2 - k - \threshold{}) + r(K_{\threshold{}}), \text{ and} \\
        r(G_1) &= d(n_1 - \threshold{}) + r(K_{\threshold{}}),
    \end{align*}
    where $r$ denotes the rank function of $\MM$.
    
    To show that $G$ is $\MM$-rigid, we need to show that the above inequality is satisfied with equality. This is equivalent to showing that $r(G) = r(G_1) + d(n_2 - k)$.
    Finally, this follows from \cref{lemma:0extension}: since $k \geq d$, every vertex in $V(G_2) - V(G_1)$ has at least $\dimensionality{}$ neighbors in $V(G_1 \cap G_2)$. Thus we can construct an $\MM$-independent subgraph of $G$ of size $r(G_1) + d(n_2 - k)$ by starting from a maximal $\MM$-independent subgraph of $G_1$ and adding vertices of degree $\dimensionality{}$.

    \textit{(b)} For $i \in \{1,2\}$, let $F_i$ be a minimal set of edges such that $G_i+F_i$ is $\MM$-rigid. Then $G_i+F_i$ is also $\MM$-independent, and since $G_1 \cap G_2$ is $\MM$-rigid, none of the edges in $F_i$ are induced by $V_1 \cap V_2$. 
    In particular, $F_1$ and $F_2$ are disjoint.

    Since $k \geq \threshold{} > \dimensionality{}$, part \emph{(a)} implies that $G+F_1+F_2$ is $\MM$-rigid. Hence by \cref{lemma:ranklinear} we have
    \begin{align*}r(G+F_1+F_2) &= d(n_1+n_2-k - \threshold{}) + r(K_{\threshold{}}), \\ r(G_i + F_i) &= d(n_i - \threshold{}) + r(K_{\threshold{}}), \hspace{.2em} i \in \{1,2\}, \text{ and} \\ r(G_1 \cap G_2) &= d(k - \threshold{}) + r(K_{\threshold{}}).
    \end{align*}
    A straightforward computation now gives
    \begin{align*}
        r(G+F_1+F_2) &= r(G_1+F_1)+r(G_2+F_2)-r(G_1\cap G_2) \\ &= |E(G_1+F_1)| + |E(G_2+F_2)| - |E(G_1\cap G_2)| \\ &= |E(G+F_1+F_2)|.
    \end{align*}
    This shows that $G+F_1+F_2$ is $\MM$-independent, and hence so is $G$, as claimed.
\end{proof}

\subsection{Boundedness}\label{subsection:boundedness}

We say that a graph matroid family $\MM$ with rank function $r$ is \emph{unbounded} if $r(K_n)$ is unbounded as a sequence of $n$; otherwise, we say that $\MM$ is \emph{bounded}. From \cref{lemma:ranklinear} it is apparent that a nontrivial graph matroid family $\MM$ is bounded if and only if its dimensionality is zero. If $\MM$ is bounded, then the sequence $\{r(K_n), n \geq 1\}$ (being monotone increasing and bounded) has a limit. We call this limit the \emph{rank of $\MM$}, and denote it by $r(\MM)$.

The following lemma provides a useful characterization of the rank of a bounded graph matroid family.

\begin{lemma}\label{lemma:unboundedforest}
    Let $\MM$ be a graph matroid family with rank function $r$.
    \begin{enumerate}
        \item $\MM$ is unbounded if and only if every forest is $\MM$-independent.
        \item If $\MM$ is bounded, then the graph consisting of $r(\MM) + 1$ disjoint edges is an $\MM$-circuit.
    \end{enumerate}
\end{lemma}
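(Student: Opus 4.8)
The plan is to treat the two parts separately: part~(a) follows from the linearity of rank (\cref{lemma:ranklinear}) together with the vertex-addition lemma (\cref{lemma:0extension}), whereas part~(b) ultimately rests on the exchange axiom of the matroids $\MM(K_n)$.

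For part~(a), the implication ``every forest is $\MM$-independent'' $\Longrightarrow$ ``$\MM$ is unbounded'' is immediate: a spanning tree of $K_n$ is a forest, hence $\MM$-independent by hypothesis, so $r(K_n) \ge n-1$. For the converse, suppose $\MM$ is unbounded. If $\MM$ is trivial there is nothing to prove, so assume $\MM$ is nontrivial; then its dimensionality satisfies $\dimensionality{} \ge 1$, since otherwise $r(K_n)$ would be eventually constant by \cref{lemma:ranklinear}. Now fix an arbitrary forest $F$ and order its vertices so that each vertex has at most one neighbour among the earlier ones (possible because forests are $1$-degenerate). Building $F$ up from the empty graph according to this order, each step adds a vertex of degree at most $1 \le \dimensionality{}$, so \cref{lemma:0extension} shows by induction that $F$ is $\MM$-independent.

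For part~(b), write $r = r(\MM)$; since the sequence $\{r(K_n) : n \ge 1\}$ is monotone and bounded, $r(K_n) = r$ for all sufficiently large $n$. Let $M_k$ denote the graph consisting of $k$ independent edges. First, $M_{r+1}$ is $\MM$-dependent, because $r(M_{r+1}) \le r(K_{2(r+1)}) \le r < r+1$. The crucial step is the claim that $M_\ell$ is $\MM$-independent for every $\ell \le r$. Suppose not, and let $\ell \le r$ be smallest with $M_\ell$ $\MM$-dependent; then $M_{\ell-1}$ is $\MM$-independent. Fix $n$ large enough that $r(K_n) = r$ and $n \ge 4\ell$. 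Since $r(K_n) = r \ge \ell$, choose an $\MM$-independent edge set $I \subseteq E(K_n)$ with $|I| = \ell$, and then a set $M'$ of $\ell-1$ pairwise disjoint edges of $K_n$ none of whose endpoints lies in the vertex set of $I$ (possible since $I$ spans at most $2\ell$ vertices); note $M'$ is $\MM$-independent, being an $(\ell-1)$-edge matching. By the exchange axiom applied to the independent sets $M'$ and $I$ in the matroid $\MM(K_n)$ (here $|M'| < |I|$), there is an edge $e \in I \setminus M'$ with $M' \cup \{e\}$ $\MM$-independent. But both endpoints of $e$ lie in the vertex set of $I$ and hence avoid the vertices of $M'$, so $M' \cup \{e\}$ is an $\ell$-edge matching, i.e.\ isomorphic to the $\MM$-dependent graph $M_\ell$ --- contradicting the isomorphism-invariance of $\MM$. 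This proves the claim. Finally, since $M_{r+1}$ is $\MM$-dependent it contains an $\MM$-circuit $C$; as $C$ is a subgraph of a matching and has no isolated vertices, it is itself a matching $M_\ell$ with $1 \le \ell \le r+1$, and the claim forbids $\ell \le r$, so $C = M_{r+1}$, which is therefore an $\MM$-circuit.

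The main obstacle is the claim in part~(b). In the bounded case $\dimensionality{} = 0$, so \cref{lemma:0extension} gives no information about matchings --- indeed, adding even a degree-$1$ vertex can turn an $\MM$-independent graph into an $\MM$-circuit --- and the argument must instead exploit the exchange axiom of the ambient matroid $\MM(K_n)$ rather than proceed purely combinatorially. Everything else is routine bookkeeping about $\MM$-circuits contained in matchings.
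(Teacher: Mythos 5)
Your proof is correct. Part (a) essentially matches the paper's argument (the paper phrases it via circuits having minimum degree at least two, you build up the forest via \cref{lemma:0extension}; these are the same idea).

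For part (b) you take a genuinely different route. The paper lets $k$ be the least integer for which the matching $G_k$ is $\MM$-dependent (so $G_k$ is a circuit), observes $k \le r(\MM)+1$, and then uses the \emph{circuit elimination axiom} in an induction on the number of components to show that \emph{every} $k$-edge graph is $\MM$-dependent, yielding $r(\MM) \le k-1$ and hence $k = r(\MM)+1$. You instead attack the lower bound on $k$ directly: embedding a size-$\ell$ independent set $I$ and a vertex-disjoint $(\ell-1)$-edge matching $M'$ into a large $K_n$, the \emph{augmentation (exchange) axiom} produces an $\ell$-edge matching $M' + e$ that is $\MM$-independent, contradicting minimality; then the circuit inside $M_{r+1}$ must be a matching with at least $r+1$ edges, so it equals $M_{r+1}$. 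Your argument is shorter and avoids the component-counting induction, at the cost of proving only what is needed (that matchings of size $\le r(\MM)$ are independent), whereas the paper's induction incidentally establishes the stronger fact that every graph on $k$ edges is $\MM$-dependent. Both are valid; yours is arguably the more economical path to the stated lemma.
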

\begin{proof}
    \textit{(a)} If $\MM$ is unbounded, then by the definition of dimensionality, every $\MM$-circuit has minimum degree at least two, and thus every forest is $\MM$-independent. Conversely, if $\MM$ is bounded, then every forest with more than $r(\MM)$ edges must be $\MM$-dependent.
    
    \textit{(b)} For $k \geq 1$, let $G_k$ denote the graph consisting of $k$ disjoint edges. Let $k$ be the smallest positive integer for which $G_k$ is $\MM$-dependent. With this choice, $G_k$ must be an $\MM$-circuit. By the previous paragraph, we have $k \leq r(\MM) + 1$. To establish \textit{(b)}, we show that every graph $G$ on $k$ edges is $\MM$-dependent, and hence $r(\MM) \leq k - 1$.

    We prove the statement by induction on $\ell$, where $G$ has $k - \ell$ components. The only graph with $k$ edges and $k$ components (and without isolated vertices) is $G_k$, which is $\MM$-dependent by assumption. Thus, let us assume that $\ell \geq 1$. Then $G$ has a component that contains at least two edges, say $e$ and $f$. Let $u,v$ be a pair of new vertices. By the induction hypothesis, $G - e + uv$ and $G - f + uv$ are both $\MM$-dependent, and hence they contain $\MM$-circuits $C_1$ and $C_2$, respectively. If $uv \notin C_i$ for some $i \in \{1,2\}$, then $C_i$ is a subgraph of $G$, and thus $G$ is $\MM$-dependent. Otherwise, by the circuit elimination axiom, there is a circuit contained in $C_1 \cup C_2 - uv$. Since this is a subgraph of $G$, we deduce, once again, that $G$ is $\MM$-dependent. 
\end{proof}

Let us also note that a graph matroid family $\MM$ is unbounded if and only if for every graph $G$ and every induced subgraph $H \subseteq G$, $E(H)$ is a closed set in $\MM(G)$.

We will also need the following
technical lemma regarding the $\MM$-circuits of bounded graph matroid families. 
Given a bounded graph matroid family $\MM$ with rank $r(\MM)$, let us say that an $\MM$-circuit is \emph{small} if it has at most $r(\MM)$ edges.

\begin{proposition}\label{lemma:smallcircuits}
    Let $\MM$ be a bounded graph matroid family.
    If there is a small $\MM$-circuit with minimum degree one, then there exists an integer $m \geq 2$ such that the star $K_{1,m}$ is the unique (up to isomorphism) small $\MM$-circuit with at most $m$ edges and with minimum degree one.
\end{proposition}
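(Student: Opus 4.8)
The plan is to reduce the statement to a claim about minimum-size circuits, and then attack that claim with an exchange argument.

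\emph{Setup and reduction.} Under the hypothesis $\MM$ has an $\MM$-circuit, so it is nontrivial; being bounded, its dimensionality is $0$ (by \cref{lemma:ranklinear}), so it has an $\MM$-circuit with a vertex of degree one. For any $\MM$-circuit $C$ we have $|E(C)| = r(C)+1 \le r(\MM)+1$ (since $\MM(C)$ is a restriction of $\MM(K_{V(C)})$ and $r(K_n)\le r(\MM)$ for all $n$), so
\[
m := \min\{\,|E(C)| : C \text{ an } \MM\text{-circuit with a vertex of degree one}\,\}
\]
is well defined, and by the hypothesis $m \le r(\MM)$; thus $m$ is also the minimum number of edges in a \emph{small} $\MM$-circuit with a vertex of degree one. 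It suffices to prove that \emph{every $\MM$-circuit with exactly $m$ edges and a vertex of degree one is isomorphic to $K_{1,m}$}: applying this to any $\MM$-circuit with $m$ edges and a degree-one vertex (which exists by definition of $m$) shows that $K_{1,m}$ itself is such a circuit, and it is small since $m\le r(\MM)$; and if $D$ is any small $\MM$-circuit with at most $m$ edges and minimum degree one then $|E(D)| = m$ by minimality of $m$, so $D \cong K_{1,m}$ as well — which is exactly the assertion.

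\emph{The bound $m \ge 3$.} If $P_3$ were an $\MM$-circuit then, by isomorphism-invariance, every two incident edges would form an $\MM$-circuit, so the $\MM$-independent subsets of $E(K_4)$ would be precisely the matchings of $K_4$; but these are not the independent sets of any matroid (basis exchange fails among the three perfect matchings of $K_4$), contradicting that $\MM(K_4)$ is a matroid. Hence $P_3$ is $\MM$-independent and $m \ge 3$.

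\emph{The exchange argument.} Among all pairs $(C,v)$ with $C$ an $\MM$-circuit on $m$ edges and $v$ a degree-one vertex of $C$, fix one for which $d_C(u)$ is maximal, where $N_C(v) = \{u\}$. Since $\MM$-circuits are connected and $|V(C)| \ge 3$, we have $d_C(u) \ge 2$. Suppose, for contradiction, that $C \not\cong K_{1,m}$; then $d_C(u) < m$, and since $C$ is connected but not a star centered at $u$, a short argument (looking at the component of $C-u$ containing a prescribed edge) yields an edge $xy \in E(C)$ with $ux \in E(C)$ and $y \notin \{u,v\}$. Form $C' := C - xy + uy$ (the degenerate case $uy \in E(C)$, which forces a triangle in $C$, is treated separately). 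Then $C'$ has $m$ edges, $v$ is still a degree-one vertex of $C'$, and $d_{C'}(u) = d_C(u)+1$. The crux is to show that $C'$ is $\MM$-dependent, equivalently that $uy$ lies in the flat $\cl(C) = \cl(C - xy)$: this holds because $\cl(C)$ contains the length-two path $\{ux, xy\}$ and — at least when $K_3$ is an $\MM$-circuit — the closure of such a path contains its chord $uy$. Granting $\MM$-dependence of $C'$, it contains an $\MM$-circuit, necessarily through $uy$; this circuit has at most $m$ edges, so by minimality of $m$ and the fact that $v$ still has degree one, it must be $C'$ itself. But then $(C',v)$ violates the maximality of $d_C(u)$. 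Therefore $C \cong K_{1,m}$.

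I expect the main difficulties to be (i) justifying the ``crux'' unconditionally — in particular also when $K_3$ (and similarly small complete graphs) is $\MM$-independent, which seems to require either a separate treatment of that case, a structural description of bounded graph matroid families, or a bulkier gadget in place of the triangle — and (ii) ruling out that the $\MM$-circuit contained in $C'$ is a \emph{proper} (hence leafless) subgraph of $C'$ rather than $C'$ itself; handling (ii) will likely require choosing the pair $(C,v)$ more carefully (for instance also minimizing $|V(C)|$ or the number of degree-one vertices of $C$) or re-routing along a different edge.
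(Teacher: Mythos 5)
Your reduction to a minimum-size statement is fine and matches the paper's setup. But the exchange argument that follows has a fatal flaw at its very first step, plus two gaps you have flagged yourself, and the overall route is quite different from (and, I believe, harder than) the one the paper takes.

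The fatal flaw: you assert ``Since $\MM$-circuits are connected \dots\ we have $d_C(u) \ge 2$.'' $\MM$-circuits of a bounded graph matroid family need \emph{not} be connected. In fact \cref{lemma:unboundedforest}(b) says that the graph consisting of $r(\MM)+1$ independent edges is always an $\MM$-circuit, so disconnected circuits always exist; and a small circuit with minimum degree one could a priori have, say, a single-edge component next to a cycle. The entire case analysis (``since $C$ is connected but not a star centered at $u$, \dots\ yields an edge $xy$ with $ux \in E(C)$ and $y \notin \{u,v\}$'') collapses without connectivity, and there is no obvious repair: you would need an argument that works even when $u$ is the unique neighbour of $v$ in a component that is just the edge $uv$.

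Beyond that, the two difficulties you name are real and are precisely where the argument is incomplete. The ``crux'' ($uy \in \cl(\{ux,xy\})$) is equivalent to $K_3$ being an $\MM$-circuit, and there are perfectly good bounded families where $K_3$ is independent: for instance the family $\mathcal{U}_{\mathcal X}$ with ${\mathcal X} = \{K_{1,k}\}$, $k\ge 3$, in which the unique small circuit is $K_{1,k}$ and $K_3$ is free. Your strategy of rerouting a single edge through $u$ has no leverage in such families. The paper avoids this entirely by never trying to certify dependence via a triangle; instead it applies the \emph{strong circuit elimination axiom} to $C$ and an isomorphic copy $C_2$ of $C$ that shares all but one edge with $C$. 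Because $C_2 \cong C$, dependence of $C_2$ is automatic (no closure computation is needed), and strong elimination produces a circuit $C_0$ through a prescribed edge inside the $m$-edge graph $(C_1 \cup C_2) - e$; minimality of $m$ together with the prescribed edge having a leaf endpoint then forces $C_0$ to be the whole graph. This simultaneously handles your issue (ii), which you left open. Finally, the paper's target is not ``show $C$ is a star directly'' but rather ``show $\mathcal C$ contains the $m$-edge matching,'' after which \cref{lemma:unboundedforest}(b) gives $r(\MM) = m-1$, contradicting smallness. This indirect route is essential precisely because disconnected circuits must be allowed. As written, your proof is not correct, and the missing ideas (strong circuit elimination against an isomorphic copy; aiming for the matching rather than the star) are the substance of the paper's argument.
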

\begin{proof}
    Let $r$ denote the rank function of $\MM$. Assuming that there is a small $\MM$-circuit with minimum degree one, let $m$ denote the minimum number of edges in such an $\MM$-circuit, and let $\mathcal{C}$ denote the set of small $\MM$-circuits with $m$ edges and with minimum degree one. Note that $m \geq 2$, since if $m=1$, i.e., if a single edge is an $\MM$-circuit, then $r(\MM) = 0$, and thus there cannot be any small $\MM$-circuits. 
    
    Suppose, for a contradiction, that $\mathcal{C}$ contains a graph that is not a star. We will show that $\mathcal{C}$ contains the graph consisting of $m$ disjoint edges. Since the members of $\mathcal{C}$ are $\MM$-circuits, this implies, by \cref{lemma:unboundedforest}(b), that $r(\MM) = m - 1$, contradicting the assumption that $m$ was the number of edges of a small $\MM$-circuit.

    We start by introducing a pair of operations. Given a graph $G$ and a pair of distinct edges $e$ and $uv$, where $v$ is a leaf vertex, the \emph{pruning} operation deletes $e$ from $G$ and adds a new vertex $v'$ and an edge $uv'$. (If the deletion of $e$ creates an isolated vertex, then we also delete this vertex from the graph.) Similarly, given a graph $G$ and a component $P$ of $G$ that is a path of length $2$, the \emph{path cutting} operation deletes $P$ from $G$ and adds two disjoint edges $uv$ and $u'v'$, where $u,v,u',v'$ are new vertices. Note that both of these operations preserve the property of having minimum degree one.
    \begin{claim}\label{claim:operations}
        $\mathcal{C}$ is closed under the pruning and path cutting operations.
    \end{claim}
    \begin{proof}
        Fix $C_1 \in \mathcal{C}$, and let $e$ and $uv$ be a pair of distinct edges of $C_1$, where $v$ is a leaf vertex. Let us also consider the isomorphic copy $C_2$ of $C_1$ obtained as $C_1-v+uv'$, where $v'$ is a new vertex. The strong circuit elimination axiom now implies that there is an $\MM$-circuit $C_0$ in $C = (C_1 \cup C_2) - e$ with $uv \in E(C_0)$. Note that $C$ is precisely the graph obtained from $C_1$ by the pruning operation on $e$ and $uv$. In particular, it is a graph on $m$ edges with minimum degree one. Since $uv \in E(C_0)$, $C_0$ also has minimum degree one, and hence by the minimality of $m$, we must have $C_0 = C$. Thus $C \in \mathcal{C}$.

        The proof for the path cutting operation is similar. Fix $C_1 \in \mathcal{C}$, and let $P = \{u,v,w\}$ be a component of $C_1$ that induces a path $\{uv,vw\}$. Let us consider the isomorphic copy $C_2$ of $C_1$ obtained as $C_1 - w + uv'$, where $v'$ is a new vertex. It follows from the strong circuit elimination axiom that there is an $\MM$-circuit $C_0$ in $C = (C_1 \cup C_2) - uv$ with $vw \in E(C_0)$. Now $C$ is precisely the graph obtained from $C_1$ by the path cutting operation on $P$; in particular, it is a graph  on $m$ edges with minimum degree one. As in the previous case, the minimality of $m$ implies that $C_0 = C$, and thus that $C \in \mathcal{C}$, as desired.
    \end{proof}
    \begin{claim}\label{claim:independentedge}
        $\mathcal{C}$ contains a graph with at least two components and such that one of the components is a single edge.
    \end{claim}
    \begin{proof}
        By the indirect assumption, there is a graph $C \in \mathcal{C}$ that is not a star. First, assume that $C$ has a single component. Since $C$ has minimum degree one, it has a leaf vertex $v$. Let $u$ be the vertex adjacent to $v$. 
        Let $v_1,\ldots,v_k$ denote the non-leaf neighbors of $u$; since $C$ is not a star, we have $k \geq 1$. By applying the pruning operation on $e_i = uv_i$ and $uv$, for each $i \in \{1,\ldots,k\}$, and using  \cref{claim:operations}, we obtain a member of $\mathcal{C}$ with at least two components.

        Hence we may assume that $C$ has at least two components. Let $u$ be the neighbor of a degree one vertex of $C$, and let $C'$ be a component of $C$ that does not contain $u$. Now by repeatedly applying the pruning operation to delete an edge from $C'$ and add a new edge incident to $u$, we can reduce $C'$ to a single edge. By \cref{claim:operations}, the resulting graph is a member of $\mathcal{C}$, as desired. 
    \end{proof}
    Now we are ready to finish the proof. Let $C$ be a member of $\mathcal{C}$ that has the maximum number $\ell$ of components consisting of a single edge. By \cref{claim:independentedge}, $\ell \geq 1$, and hence there is a component of $C$ consisting of some edge $uv$. If $\ell < m$, then $C$ also contains a component with at least two edges; let $e$ be an edge in such a component. By applying the pruning operation with $uv$ and $e$, and then the path cutting operation to the component containing $uv$, we obtain a graph $C'$ with at least $\ell+1$ components that consist of a single edge. Since $C' \in \mathcal{C}$ by \cref{claim:operations}, this contradicts the maximal choice of $C$.

    Hence $\ell = m$, which means that $C$ consists of $m$ disjoint edges. We deduce, using \cref{lemma:unboundedforest}(b), that $r(\MM) = m-1$, contradicting the 
    definition of $m$.
\end{proof}

\subsection{Examples and operations}\label{subsection:operations}

We close this section by illustrating the concepts introduced so far through several examples. We also recall the notion of an abstract rigidity matroid; as we will see, families of abstract rigidity matroids fit naturally within our framework of graph matroid families. Finally, we define and investigate the union and de-coning operations on graph matroid families.

\paragraph*{Graphic, bicircular, and even cycle matroids.}

The prototypical example of a graph matroid family is the family $\graphicmatroid$ of graphic matroids. A graph is $\graphicmatroid$-rigid if and only if it is connected, and the $\graphicmatroid$-circuits are simply the cycle graphs. Thus each $\graphicmatroid$-circuit has minimum degree two, and the smallest $\graphicmatroid$-circuit---the triangle---has three vertices. Hence $\graphicmatroid$ has dimensionality $1$ and threshold $2$.

Apart from $\graphicmatroid$, the most basic examples of graph matroid families may be the family $\bicircular$ of \emph{bicircular matroids}, and the family $\evencycle$ of \emph{even cycle matroids}. A graph $G$ is $\bicircular$-independent if and only if $G$ is a pseudoforest (i.e., each component of $G$ contains at most one cycle), and $G$ is $\bicircular$-rigid if and only if each component of $G$ contains a cycle. Similarly, $G$ is $\evencycle$-independent if and only if $G$ is a pseudoforest without cycles of even length, and $G$ is $\evencycle$-rigid if and only if each component of $G$ contains a cycle of odd length. These examples show that rigidity with respect to a graph matroid family does not, in general, imply connectivity. Both $\bicircular$ and $\evencycle$ have dimensionality $1$ and threshold $3$. 

As we will see shortly, there are infinitely many graph matroid families with dimensionality one. However, it is shown in~\cite[Theorem 5]{simoes-pereira_1975} that $\graphicmatroid,\bicircular$, and $\evencycle$ are the only graph matroid families with dimensionality one in which all circuits are connected graphs.

\paragraph*{Count matroids.}

Let $k$ and $\ell$ be a pair of integers with $k \geq 1$ and $\ell \leq 2k-1$. We say that a graph $G$ is \emph{$(k,\ell)$-sparse} if $i_G(X) \leq k|X| - \ell$ holds for every set of vertices $X \subseteq V(G)$ with $|X| \geq 2$. Here, $i_G(X)$ denotes the number of edges induced by $X$ in $G$. We define the family $\MM_{k,\ell}$ of \emph{$(k,\ell)$-count matroids} by letting a graph be $\MM_{k,\ell}$-independent if and only if it is $(k,\ell)$-sparse. It is known that $\MM_{k,\ell}$ is indeed a graph matroid family~\cite{lorea_1979}, and different choices of $k$ and $\ell$ give rise to different graph matroid families. Using this notation, we have $\graphicmatroid = \MM_{1,1}$ and $\bicircular = \MM_{1,0}$.

It is easy to see that $\MM_{k,\ell}$ has dimensionality $k$. Calculating the threshold is nontrivial; it can be deduced from the proof of~\cite[Lemma 2.3]{garamvolgyi.etal_2024} that when $\ell \geq 0$, the threshold $\threshold{\MM_{k,\ell}}$ is as follows: 
\begin{align*}
    \threshold{\MM_{k,\ell}} = 
    \begin{cases}
        2k-1 \hspace{1em} & \text{if } k < \ell \leq 2k-1, 
        \\ 2k  & \text{if } 0 < \ell \leq k,
        \\ 2k+1 & \text{if } \ell = 0.
    \end{cases}
\end{align*}

Schmidt~\cite{schmidt_1979} used the following modification of $\MM_{k,0}$ to construct uncountably many distinct graph matroid families. Let $k \geq 2$, and let $\CC$ be a collection of $3$-connected $2k$-regular graphs. We define the family $\MM_\CC$ by letting a graph be $\MM_\CC$-independent if it is $(k,0)$-sparse and does not contain any copy of a graph in $\CC$ as a subgraph.
Schmidt showed that $\MM_\CC$ is indeed a graph matroid family, and that different choices of $\CC$ give rise to different graph matroid families. It is unclear whether there are uncountably many graph matroid families with dimensionality one.

\paragraph*{Abstract rigidity matroids.}

The graph matroid families we have seen so far are defined in a purely combinatorial way. Another set of examples comes from discrete geometry, the most prominent being the family of generic rigidity matroids.

Given a graph $G$ and an embedding $p : V(G) \to \RR^d$, the \emph{rigidity matrix} $R(G,p)$ is a real matrix that has one row for each edge of $G$, $d$ columns for each vertex of $G$, and whose entries are given by \[R(G,p) = \begin{blockarray}{cccccc}
    & \cdots & u & \cdots  & v & \cdots \\
   \begin{block}{c(ccccc)}
      \vdots &  &  &  &  &  \\
     uv & 0  & p(u) - p(v) & 0  & p(v) - p(u) & 0  \\
      \vdots &  &  &  &  &  \\
     \end{block}
   \end{blockarray}.\]
The \emph{generic $d$-dimensional rigidity matroid} $\mathcal{R}_d(G)$ of $G$ is the row matroid of $R(G,p)$ for a generic embedding $p : V(G) \to \RR^d$. It is known that this is well-defined, and that in this way we obtain a nontrivial graph matroid family $\mathcal{R}_d$ with dimensionality $d$. We call $\mathcal{R}_d$ the \emph{family of generic $d$-dimensional rigidity matroids}.

The combinatorial characterization of $\mathcal{R}_d$-rigid graphs is a difficult problem that is wide open already in the $d=3$ case. 
To explore this problem and to capture some key combinatorial properties of $\mathcal{R}_d$, Graver~\cite{graver_1991} introduced the notion of an abstract rigidity matroid. Let $n$ and $d$ be positive integers with $n \geq d$, and let $\MM_0$ be a matroid on the edge set of $K_n$ with closure operator $\cl_0$. We say that $\MM_0$ is a \emph{$d$-dimensional abstract rigidity matroid} if it satisfies the following pair of conditions.

\begin{itemize}
    \item If $E_1,E_2 \subseteq E(K_n)$ and $|V(E_1) \cap V(E_2)| \leq d-1$, then $\cl_0(E_1 \cup E_2) = \cl_0(E_1) \cup \cl_0(E_2)$.
    \item If $E_1,E_2 \subseteq E(K_n)$ and $V_i = V(E_i), i \in \{1,2\}$ are such that $|V_1 \cap V_2| \geq d$ and $\cl_0(E_i) = E(K_{V_i}), i \in \{1,2\}$, then $\cl_0(E_1 \cup E_2) = E(K_{V_1 \cup V_2})$.
\end{itemize}

Nguyen gave the following characterization of abstract rigidity matroids.
\begin{theorem}\cite[Lemma 2.1 and Theorem 2.2]{nguyen_2010}\label{theorem:nguyen}
    Let $n$ and $d$ be positive integers with $n \geq d$, and let $\MM_0$ be a matroid on the edge set of $K_n$. 
    \begin{enumerate}
        \item $\MM_0$ is a $d$-dimensional abstract rigidity matroid if and only if its rank is $dn - \binom{d+1}{2}$ and the addition of vertices of degree at most $d$ preserves independent sets in $\MM_0$.
        \item If $\MM_0$ is an abstract rigidity matroid, then every copy of $E(K_{d+2})$ in $K_n$ is a circuit of $\MM_0$.
    \end{enumerate}
\end{theorem}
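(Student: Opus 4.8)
Part (b) reduces to part (a): granting (a), the localized rank formula gives $r_0(E(K_{d+2}))=d(d+2)-\binom{d+1}{2}=\binom{d+2}{2}-1=|E(K_{d+2})|-1$, so every copy of $K_{d+2}$ is dependent, while for each edge $e$ the graph $K_{d+2}-e$ is obtained from a $d$-clique by two successive additions of a vertex of degree $d$ and is therefore independent; hence $K_{d+2}$ is a circuit. So the plan is to prove the equivalence in (a), and throughout I would lean on two facts, established at the outset: \emph{locality} of the closure, $\cl_0(E)\subseteq E(K_{V(E)})$ for every $E\subseteq E(K_n)$; and the \emph{localized rank formula} $r_0(E(K_V))=d|V|-\binom{d+1}{2}$ for every $V$ with $|V|\ge d$, together with the independence of $K_V$ when $|V|\le d$ (here $r_0$ denotes the rank function of $\MM_0$). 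I tacitly restrict to the non-degenerate range in which the notion is the intended one, in particular $n\ge d+2$.

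\emph{From the rank formula and vertex-addition to the two closure axioms.} Here locality is immediate: if $xy\in\cl_0(E)$ with $x\notin V(E)$, then appending $xy$ to a basis of $E$ realizes an addition of a vertex of degree one, which preserves independence and so contradicts $r_0(E+xy)=r_0(E)$. The localized rank formula then follows from a two-sided estimate --- the lower bound from a Henneberg-type build-up of $K_V$ (start from a $d$-clique, then add the remaining vertices of $V$ one at a time with degree $d$), the upper bound by extending such a build-up from $V$ to all of $V(K_n)$ and comparing with the global rank $dn-\binom{d+1}{2}$. For the second closure axiom: since $\cl_0(E_1\cup E_2)=\cl_0(E(K_{V_1})\cup E(K_{V_2}))$, it suffices to check, when $|V_1\cap V_2|\ge d$, that $E(K_{V_1})\cup E(K_{V_2})$ spans $K_{V_1\cup V_2}$, which I would do by exhibiting an independent spanning subgraph of the right size: take a spanning independent subgraph of $K_{V_1}$ and attach the vertices of $V_2\setminus V_1$ one by one, each joined to a fixed $d$-subset of $V_1\cap V_2$ (edges lying in $E(K_{V_2})$). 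For the first closure axiom, locality reduces the task to showing that $F_1\cup F_2$ is closed whenever $F_1,F_2$ are closed and $|V(F_1)\cap V(F_2)|\le d-1$; the engine is a rank-additivity lemma $r_0(F_1\cup F_2)=r_0(F_1)+r_0(F_2)-r_0(F_1\cap F_2)$ in this regime, proved by extending bases of the two sides independently, the small overlap preventing interaction. Granting additivity, an edge $xy\in\cl_0(F_1\cup F_2)\setminus(F_1\cup F_2)$ with both endpoints on a single side is pushed into $\cl_0(F_1)$ or $\cl_0(F_2)$ by a one-line rank comparison; the one delicate point is a \emph{crossing} edge (endpoints in $V(F_1)\setminus V(F_2)$ and $V(F_2)\setminus V(F_1)$), which I would rule out by first checking that $\cl_0(F_1+xy)=F_1+xy$ and then applying additivity to $F_1+xy$ and $F_2$, whose vertex sets now meet in at most $d$ vertices --- the case of exactly $d-1$ original common vertices requiring a short separate computation.

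\emph{From the two closure axioms to the rank formula and vertex-addition.} This is where the main difficulty lies, since now the only tools are the two closure identities. I would prove locality and the localized rank formula simultaneously by induction on $n$. The base case has to be done by hand, using in particular that the axioms force every copy of $K_{d+2}$ to be dependent and that small cliques are independent. For the inductive step, write $K_n=K_{n-1}+v$ with $v$ of degree $n-1$; apply the first axiom (together with locality) to see that all but $d$ of the edges at $v$ are bridges over the rest, giving $r_0(E(K_n))\le r_0(E(K_{n-1}))+d$; and apply the second axiom to glue $E(K_{n-1})$ to a $(d+1)$-clique through $d$ shared vertices, so that $d$ suitably chosen edges at $v$ already span $K_n$ while raising the rank of $E(K_{n-1})$ by $d$, which yields the matching lower bound. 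Once the rank formula is in hand, the vertex-addition property follows: for a new vertex of degree at most $d-1$, apply the first axiom with $E_1$ the old edge set and $E_2$ the star at the new vertex (their vertex sets meeting in fewer than $d$ vertices) and invoke locality; the boundary case of degree exactly $d$ takes one further step, comparing ranks via the second axiom and the formula just proved. I expect bootstrapping locality and settling the base case --- with no geometric model and, a priori, no vertex-addition tool at hand --- to be the crux of the whole argument.
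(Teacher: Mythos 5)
The paper does not prove this statement: it is cited verbatim from Nguyen \cite[Lemma 2.1 and Theorem 2.2]{nguyen_2010} and used as a black box. So there is no in-paper argument to compare yours against; all I can do is assess your sketch on its own.

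Your reduction of (b) to (a) is correct, and in the forward direction of (a) the locality claim, the localized rank formula, and the verification of the second closure axiom all go through as you describe. The substantive gap is the rank-additivity lemma that is supposed to drive the first closure axiom. You assert $r_0(F_1\cup F_2)=r_0(F_1)+r_0(F_2)-r_0(F_1\cap F_2)$ for closed $F_1,F_2$ with $|V(F_1)\cap V(F_2)|\le d-1$, ``proved by extending bases of the two sides independently, the small overlap preventing interaction.'' But the phrase ``the small overlap preventing interaction'' is precisely the content of the lemma, and it does not follow from a definition-chase: you must actually show that a basis $B_1$ of $F_1$ and a basis $B_2$ of $F_2$, both extending a common basis of $F_1\cap F_2$, have independent union, and it is not clear how the rank formula plus vertex-addition force this without a genuine combinatorial argument. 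Your subsequent treatment of the crossing edge then leans on the same additivity applied to $F_1+xy$ and $F_2$, whose vertex sets now overlap in up to $d$ vertices --- outside the regime you have argued for --- and also presumes, without justification, that $F_1+xy$ is closed. For the converse direction, you candidly flag the base case and the bootstrapping of locality as unresolved; in addition, the claim that the first axiom yields ``all but $d$ of the edges at $v$ are bridges over the rest'' is not a statement the first axiom (applicable only when the vertex overlap is at most $d-1$) directly delivers, and the roles of the two axioms in producing the upper versus lower bound for $r_0(K_n)-r_0(K_{n-1})$ appear to be swapped in your description. In short: the plan is directionally reasonable, but the two load-bearing steps --- the additivity lemma in one direction and the base case plus bootstrap in the other --- are left as gaps, so this is a sketch rather than a proof.
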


We stress that an abstract rigidity matroid need not be symmetric, i.e., invariant under automorphisms of $K_n$. Nonetheless, the most studied abstract rigidity matroids do have this property, and in fact, they are usually part of a graph matroid family. This motivates the investigation of families of abstract rigidity matroids.
 
We say that a graph matroid family $\MM$ is a \emph{family of $d$-dimensional abstract rigidity matroids} if $\MM(K_n)$ is a $d$-dimensional abstract rigidity matroid, for every $n \geq d$. It is immediate from \cref{theorem:nguyen}(a) that in this case $\MM$ is nontrivial and has dimensionality $d$. 

The following result provides a simple characterization of families of abstract rigidity matroids. We note that this result is implicit in~\cite[Section 10]{kalai_1985}. In fact, the notion of \emph{$d$-hypergraphic matroids} introduced in~\cite{kalai_1985} coincides with the notion of $d$-dimensional abstract rigidity matroid families.

\begin{proposition}\label{theorem:abstractrigidity}
    Let $\MM$ be an unbounded, nontrivial graph matroid family with dimensionality $\dimensionality{}$ and threshold $\threshold{}$. The following are equivalent:
    \begin{enumerate}
        \item $\MM$ is a family of abstract $\dimensionality{}$-dimensional rigidity matroids,
        \item $K_{d+2}$ is an $\MM$-circuit,
        \item we have $\threshold{} = \dimensionality{} + 1$.
    \end{enumerate}
\end{proposition}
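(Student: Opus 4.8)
The plan is to prove the cycle of implications (a) $\Rightarrow$ (b) $\Rightarrow$ (c) $\Rightarrow$ (a), with the equivalence of (b) and (c) being immediate from the definitions of dimensionality and threshold. The key elementary observation is that $K_{\dimensionality{}+2}$ is the only graph on $\dimensionality{}+2$ vertices with minimum degree $\dimensionality{}+1$, since such a graph is necessarily $(\dimensionality{}+1)$-regular, hence complete. Consequently, if $K_{\dimensionality{}+2}$ is an $\MM$-circuit, then it is an $\MM$-circuit with minimum degree $\dimensionality{}+1$ on $\dimensionality{}+2$ vertices, so $\threshold{}\le \dimensionality{}+1$; together with $\threshold{}>\dimensionality{}$ this forces $\threshold{}=\dimensionality{}+1$. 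Conversely, if $\threshold{}=\dimensionality{}+1$, then an $\MM$-circuit achieving the threshold has $\dimensionality{}+2$ vertices and minimum degree $\dimensionality{}+1$, hence is a copy of $K_{\dimensionality{}+2}$. This settles (b) $\Leftrightarrow$ (c).

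For (a) $\Rightarrow$ (b): if $\MM$ is a family of $\dimensionality{}$-dimensional abstract rigidity matroids, then $\MM(K_{\dimensionality{}+2})$ is a $\dimensionality{}$-dimensional abstract rigidity matroid (as $\dimensionality{}+2\ge \dimensionality{}$), and \cref{theorem:nguyen}(b) says every copy of $E(K_{\dimensionality{}+2})$ is a circuit of it; in particular $K_{\dimensionality{}+2}$ is an $\MM$-circuit.

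The substantive direction is (c) $\Rightarrow$ (a), which I would prove by verifying the two conditions of \cref{theorem:nguyen}(a) for $\MM(K_n)$ for every $n\ge \dimensionality{}$. The requirement that adding a vertex of degree at most $\dimensionality{}$ preserves independence is exactly \cref{lemma:0extension}. For the rank condition, I first observe that, since $\MM$ has dimensionality $\dimensionality{}$, every $\MM$-circuit has minimum degree at least $\dimensionality{}+1$ and hence at least $\dimensionality{}+2$ vertices; therefore every graph on at most $\dimensionality{}+1$ vertices is $\MM$-independent, so in particular $r(K_{\dimensionality{}})=\binom{\dimensionality{}}{2}$ and $r(K_{\dimensionality{}+1})=\binom{\dimensionality{}+1}{2}$. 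Using $\threshold{}=\dimensionality{}+1$, \cref{lemma:ranklinear} gives, for all $n\ge \dimensionality{}+1$,
\[
r(K_n)=\dimensionality{}\bigl(n-(\dimensionality{}+1)\bigr)+\binom{\dimensionality{}+1}{2}=\dimensionality{}n-\dimensionality{}(\dimensionality{}+1)+\binom{\dimensionality{}+1}{2}=\dimensionality{}n-\binom{\dimensionality{}+1}{2},
\]
and the same identity holds for $n=\dimensionality{}$ by the computation of $r(K_{\dimensionality{}})$ above. This is precisely the rank prescribed by \cref{theorem:nguyen}(a), so $\MM(K_n)$ is a $\dimensionality{}$-dimensional abstract rigidity matroid for every $n\ge \dimensionality{}$, i.e.\ (a) holds.

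I do not expect a genuine obstacle: the argument is essentially bookkeeping that combines \cref{lemma:ranklinear,lemma:0extension} with \cref{theorem:nguyen}. The only points that need care are matching the combinatorial hypotheses of \cref{theorem:nguyen} with our lemmas, and noticing that the borderline case $n=\dimensionality{}$ lies just outside the range of \cref{lemma:ranklinear} and so must be checked directly.
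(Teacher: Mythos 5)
Your proposal is correct and follows essentially the same cycle of implications $(a)\Rightarrow(b)\Rightarrow(c)\Rightarrow(a)$ as the paper, using the same ingredients (\cref{theorem:nguyen}, \cref{lemma:0extension}, and \cref{lemma:ranklinear}). You are a bit more careful than the paper in two small places: you spell out why a minimum-degree-$(\dimensionality{}+1)$ graph on $\dimensionality{}+2$ vertices must be $K_{\dimensionality{}+2}$ (giving both directions of $(b)\Leftrightarrow(c)$, though only one is needed for the cycle), and you note that the case $n=\dimensionality{}$ falls outside the range of \cref{lemma:ranklinear} and verify it directly, whereas the paper applies the formula for all $n\ge\dimensionality{}$ without comment.
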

\begin{proof}
    Let $r$ denote the rank function of $\MM$. The first implication is given by \cref{theorem:nguyen}(b), while \textit{(b)} $\Rightarrow$ \textit{(c)} follows from the definition of the threshold. Finally, \textit{(c)} $\Rightarrow$ \textit{(a)} follows from the observation that by \cref{lemma:0extension}, $K_{d+1}$ is $\MM$-independent, and hence \cref{lemma:ranklinear} implies that \[r(K_n) = d(n-d - 1) + \binom{d+1}{2} = dn - \left(d(d+1) - \binom{d+1}{2}\right) = dn - \binom{d+1}{2},\] for every $n \geq d$. Thus $\MM$ is an abstract rigidity matroid family by \cref{theorem:nguyen}(a).
\end{proof}

We can also show that families of abstract $\dimensionality{}$-dimensional rigidity matroids are the graph matroid families of dimensionality $\dimensionality{}$ that have, in a sense, smallest rank.

\begin{lemma}\label{lemma:minimumrank}
    Let $\MM$ be an unbounded, nontrivial graph matroid family with dimensionality $\dimensionality{}$.
    \begin{enumerate}
        \item For every $n \geq d+2$, we have $r(K_n) \geq dn - \binom{d+1}{2}$, and equality holds for some $n$ if and only if $\MM$ is a family of abstract $\dimensionality{}$-dimensional rigidity matroids.
        \item (See~\cite[Theorem 3.11.8]{graver.etal_1993}) The only $1$-dimensional family of abstract rigidity matroids is the family of graphic matroids.
    \end{enumerate}
\end{lemma}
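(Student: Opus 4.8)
The plan is to treat parts (a) and (b) separately, using \cref{lemma:ranklinear} (linearity of rank), \cref{lemma:0extension} (vertex addition), and \cref{theorem:abstractrigidity}.

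For part (a), I would first record two consequences of \cref{lemma:0extension}: the complete graph $K_{d+1}$ is $\MM$-independent (build it up by adding vertices of degrees $1,2,\dots,d$), so $r(K_{d+1}) = \binom{d+1}{2}$; and $r(K_{n+1}) \ge r(K_n) + d$ for every $n$ (extend a maximum $\MM$-independent subgraph of $K_n$ by one new vertex of degree $d$, whose incident edges are bridges). Telescoping these from $n = d+1$ gives $r(K_n) \ge \binom{d+1}{2} + d(n-d-1) = dn - \binom{d+1}{2}$, which is the inequality. For the equality statement, the ``if'' direction is immediate: if $\MM$ is an abstract $d$-dimensional rigidity matroid family then $\threshold{\MM} = d+1$ by \cref{theorem:abstractrigidity}, and \cref{lemma:ranklinear} gives $r(K_n) = d(n-d-1) + \binom{d+1}{2} = dn - \binom{d+1}{2}$ for all $n \ge d+1$. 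For the converse, suppose $r(K_{n_0}) = dn_0 - \binom{d+1}{2}$ for some $n_0 \ge d+2$; since $r(K_{d+1}) = \binom{d+1}{2}$ and each step $r(K_{m+1}) - r(K_m)$ is at least $d$, equality at $n_0$ forces every step up to $n_0$ to equal exactly $d$, so in particular $r(K_{d+2}) = \binom{d+1}{2} + d = \binom{d+2}{2} - 1$. Hence $\MM(K_{d+2})$ is dependent and contains an $\MM$-circuit $C$; by the definition of dimensionality $C$ has minimum degree $\ge d+1$, it cannot lie on only $d+1$ vertices (as $K_{d+1}$ is $\MM$-independent), and as a subgraph of $K_{d+2}$ it has maximum degree $\le d+1$, forcing $C = K_{d+2}$. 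Then \cref{theorem:abstractrigidity} yields the conclusion.

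For part (b), $d = 1$, so by \cref{theorem:abstractrigidity} the triangle $K_3$ is an $\MM$-circuit, while by \cref{lemma:unboundedforest}(a) every forest is $\MM$-independent; thus every $\graphicmatroid$-independent graph is $\MM$-independent, and it remains to show that every graph with a cycle is $\MM$-dependent. For this it suffices to prove, by induction on $m$, that every cycle $C_m$ with $m \ge 3$ is an $\MM$-circuit. The base case $m = 3$ is the hypothesis. For $m \ge 4$, take a triangle $C_1$ on $\{u,v,x\}$ and a copy $C_2$ of $C_{m-1}$ on $\{u,v,y_1,\dots,y_{m-3}\}$, glued so that $E(C_1) \cap E(C_2) = \{uv\}$; both are $\MM$-circuits by the hypothesis and the inductive hypothesis. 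Strong circuit elimination applied with the common edge $uv$ and the edge $xu \in E(C_1) \setminus E(C_2)$ yields an $\MM$-circuit $D$ with $xu \in E(D) \subseteq E(C_1 \cup C_2) \setminus \{uv\}$; but $(C_1 \cup C_2) - uv$ is exactly an $m$-cycle through all $m$ vertices, and every proper subgraph of a cycle is a forest and hence $\MM$-independent, so $D$ must be this entire $m$-cycle. This closes the induction, so the $\MM$-independent graphs are precisely the forests and $\MM = \graphicmatroid$.

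The routine points are the identification of $K_{d+1}$ and of proper subgraphs of cycles as $\MM$-independent, and the bookkeeping showing that deleting the shared edge of the two prescribed cycles splices them into a single $m$-cycle. The main thing to get right is the equality analysis in part (a): one must use that once equality holds the rank of $\MM(K_n)$ increases in steps of size exactly $d$, in order to pin $K_{d+2}$ down as an $\MM$-circuit and then invoke \cref{theorem:abstractrigidity}. I expect part (b) to go through without serious difficulty, provided the two cycles are overlapped in exactly one edge, so that removing that edge produces a longer cycle rather than a theta-type graph.
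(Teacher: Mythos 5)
Your proof is correct and follows essentially the same plan as the paper: the inequality in part~\textit{(a)} comes from $K_{d+1}$ being $\MM$-independent plus degree-$d$ vertex additions, the characterization of equality rests on \cref{theorem:abstractrigidity}, and part~\textit{(b)} is the same induction showing cycles are $\MM$-circuits via circuit elimination on a triangle glued to $C_{n-1}$.

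The one place you diverge is the ``only if'' half of the equality claim in~\textit{(a)}. You argue forward: equality at some $n_0$ forces every step $r(K_{m+1}) - r(K_m)$ below $n_0$ to be exactly $d$, hence $r(K_{d+2}) = \binom{d+2}{2} - 1$, so $K_{d+2}$ is dependent and contains an $\MM$-circuit $C$, and the degree constraints (minimum degree $\geq d+1$ by dimensionality, maximum degree $\leq d+1$ inside $K_{d+2}$) pin down $C = K_{d+2}$, after which \cref{theorem:abstractrigidity} finishes. The paper instead takes the contrapositive: if $\MM$ is not a family of abstract rigidity matroids, then $K_{d+2}$ is $\MM$-independent by \cref{theorem:abstractrigidity}, so the same telescoping argument starting from $r(K_{d+2}) = \binom{d+2}{2}$ gives the strict inequality $r(K_n) \geq \binom{d+2}{2} + d(n-d-2) > dn - \binom{d+1}{2}$ for all $n \geq d+2$. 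Both arguments are correct; the paper's version is a bit shorter because it avoids the step-by-step bookkeeping and the need to deduce that the circuit inside $K_{d+2}$ is $(d+1)$-regular, but your direct argument is a perfectly valid alternative and perhaps makes the role of $K_{d+2}$ being a circuit slightly more transparent.
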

\begin{proof}
    \textit{(a)} By the definition of dimensionality, $K_{d+1}$ is $\MM$-independent, and the addition of vertices of degree $d$ preserves $\MM$-independence. It follows that for every $n \geq d+2$, we have \[r(K_n) \geq d(n - d -1) + \binom{d+1}{2} = dn - \binom{d+1}{2},\] as claimed. Moreover, if $\MM$ is not a family of abstract $\dimensionality{}$-dimensional rigidity matroids, then by \cref{theorem:abstractrigidity}, $K_{\dimensionality{}+2}$ is $\MM$-independent, and hence by an analogous reasoning we have \[r(K_n) \geq d(n - d -2) + \binom{d+2}{2} > dn - \binom{d+1}{2}\] for every $n \geq d+2$.

    \textit{(b)} We will show that if $\MM$ is a $1$-dimensional abstract rigidity matroid, then the $\MM$-circuits are precisely the cycle graphs $C_n, n \geq 3$. In fact, it suffices to show that cycle graphs are $\MM$-circuits. Indeed, $\MM$ has dimensionality one, so forests are $\MM$-independent, and since every graph that is not a forest contains a cycle, if cycles are $\MM$-circuits, then they must be the only $\MM$-circuits.

    We prove that the cycle $C_n$ is an $\MM$-circuit by induction on $n$. The $n=3$ case follows from \cref{theorem:abstractrigidity}. Thus, let us assume that $n \geq 4$ and that $C_{n-1}$ is an $\MM$-circuit. By gluing a triangle onto an edge $e$ of $C_{n-1}$, we can write $C_n$ as $(C_{n-1} \cup C_3) - e$. Thus by the circuit elimination axiom, $C_n$ must contain an $\MM$-circuit. Since forests are $\MM$-independent, this $\MM$-circuit must be all of $C_n$, as desired.  
\end{proof}

Besides the family $\mathcal{R}_d$ of $d$-dimensional generic rigidity matroids, the most studied families of abstract rigidity matroids are the family $\mathcal{H}_d$ of $d$-hypergraphic matroids introduced by Kalai~\cite{kalai_1985}, and the family $\mathcal{C}_d$ of generic $C^{d-2}_{d-1}$-cofactor matroids introduced by Whiteley~\cite{whiteley_1996}. These are also defined as row matroids of certain symbolic matrices. 

Another way of constructing families of abstract rigidity matroids is by fixing an irreducible affine variety $X \subseteq \RR^d$ and letting $\mathcal{R}_X(G)$ be the row matroid of the rigidity matrix $R(G,p)$, where $p : V(G) \to X$ is a generic embedding. Here, ``generic'' is meant in the sense of ``generic among embeddings of $G$ into $X$'', as opposed to a generic embedding of $G$ into $\RR^d$ in which the points happen to lie on $X$. It can be shown using standard techniques that if $X$ is not contained in any affine hyperplane, then $\mathcal{R}_X$ is a family of abstract $d$-dimensional rigidity matroids. 

By taking $X = \RR^d$, we recover the family $\mathcal{R}_d$ of generic rigidity matroids. 
As far as we are aware, the only other case of this construction that has been considered previously is when $X$ is the moment curve in $\RR^d$, which was recently investigated by Crespo Ruiz and Santos~\cite{cresporuiz.santos_2023}.

\paragraph*{Bounded matroid families.}

The matroid families we have seen so far all have positive dimensionality. Let us also consider some bounded families. The most basic example is the family $\mathcal{U}_k$ of rank-$k$ uniform matroids, defined by letting $\mathcal{U}_k(G)$ be the rank-$k$ uniform matroid on $E(G)$, for every graph $G$. More generally, for any graph matroid family $\MM,$ we can take its \emph{rank-$k$ truncation} $\MM_k$, so that a graph $G$ is $\MM_{k}$-independent if and only if $G$ is $\MM$-independent and $|E(G)| \leq k$.

However, not all bounded graph matroids arise as the truncation of some unbounded graph matroid family. Indeed, if $\MM$ is unbounded, then every $\MM$-circuit has minimum degree at least two. Since every small $\MM_k$-circuit is also an $\MM$-circuit, this means that small $\MM_k$-circuits must also have minimum degree at least two. (Recall that an $\MM_k$-circuit is \emph{small} if it has at most $r(\MM_k)$ edges.) This is not true for all bounded graph matroid families, as shown by the next construction.

Let $k$ be a positive integer, and let $\mathcal{X}$ be a set of (isomorphism classes of) graphs, each with $k$ edges. We say that $\mathcal{X}$ is \emph{union-stable} if for any pair of graphs $G,H \in \mathcal{X}$ and any $e \in E(G) \cap E(H)$, either $(G \cup H) - e$ has at least $k+1$ edges, or $(G \cup H) - e \in \mathcal{X}$. It is not difficult to show that for such an $\mathcal{X}$, the assignment 
\[G \text{ is } \mathcal{U}_\mathcal{X}\text{-independent}
\Leftrightarrow |E(G)| \leq k \text{ and } G \notin \mathcal{X}\]
defines a bounded graph matroid family $\mathcal{U}_\mathcal{X}$, see~\cite[Lemma 2.5]{jackson.tanigawa_2024}.

For example, let $\mathcal{X}$ consist of (isomorphism classes of) the star $K_{1,k}$. If $k \geq 3$, then $\mathcal{X}$ is union stable, and in the associated matroid family $\mathcal{U}_\mathcal{X}$, $K_{1,k}$ is the only small $\MM$-circuit.

Another, perhaps more interesting example of a bounded graph matroid family can be constructed as follows.
Let $t$ be a positive integer, let $G$ be a graph, and fix a generic embedding $p: V(G) \to \RR^t$. The set of symmetric tensor products $\{p(u) \otimes p(v) + p(v) \otimes p(u), uv \in E(G)\}$, viewed as vectors in $\RR^t \otimes \RR^t$, defines a linear matroid $\mathcal{S}_t(G)$ on the edge set of $G$. This yields a graph matroid family $\mathcal{S}_t$ which we call the family of \emph{rank-$t$ generic symmetric tensor matroids}. It was recently shown by Brakensiek et al.\ \cite{brakensiek.etal_2024} that $\mathcal{S}_t(K_n)$ is dual to the generic rigidity matroid $\mathcal{R}_{n-t-1}(K_n)$. From this it follows (and it is not difficult to see directly) that $\mathcal{S}_t$ is a bounded family with rank $\binom{t+1}{2}$, and that $K_{1,t+1}$ is a small $\mathcal{S}_t$-circuit.

Jordán~\cite{jordan_2021} and Grasegger, Guler and Jackson~\cite{grasegger.etal_2022} characterized $\mathcal{R}_{d}(K_{d+t+1})$ for all $d$ whenever $t \leq 5$. By duality, this yields a characterization of $\mathcal{S}_t(K_n)$ for all $n$, and hence of $\mathcal{S}_t(G)$ for all graphs $G$, for $t \leq 5$. (See~\cite{jackson.tanigawa_2025} for an alternative proof of this result.) No combinatorial characterization is known when $t \geq 6$.

\paragraph*{Unions and de-coning.}
To end this subsection, we briefly consider a pair of operations for constructing new graph matroid families from old ones. 
Let $\mathcal{M}_0^i = (E,\mathcal{I}_i), i \in \{1,\ldots,k\}$ be a collection of matroids on a common ground set $E$. The \emph{union} of $\mathcal{M}_0^1,\ldots,\mathcal{M}_0^k$, denoted by $\cup_{i=1}^k \MM^i_0$, is the matroid on ground set $E$ whose family of independent sets is
\[\mathcal{I} = \{I_1 \cup \ldots \cup I_k : I_1 \in \mathcal{I}_1, \ldots, I_k \in \mathcal{I}_k\}.\]

Given graph matroid families $\MM_1,\ldots,\MM_k$, we define their \emph{union} $\MM = \cup_{i=1}^k \MM_i$ by letting $\MM(G) = \cup_{i=1}^k \MM_i(G)$ for every finite graph $G$. It is immediate from the definitions that $\MM$ is also a graph matroid family. The following lemma shows that dimensionality is additive under taking unions. %

\begin{lemma}\label{lemma:uniondimensionality}
    Let $\MM$ be the union of the graph matroid families $\MM_1,\ldots,\MM_k$. If each of $\MM_1,\ldots,\MM_k$ is nontrivial, then so is $\MM$. In this case the dimensionality of $\MM$ is the sum of the dimensionalities of $\MM_1,\ldots,\MM_k$. 
\end{lemma}
\begin{proof}
    Suppose that each $\MM_i$ is nontrivial, let $d_i,t_i$ and $r_i$ denote its dimensionality, threshold, and rank function, respectively, and set $d = \sum_{i=1}^k d_i$ and $t = \max_{i=1}^k t_i$. Let $r$ be the rank function of $\MM$. For every integer $n \geq t$ we have \[r(K_n) \leq \sum_{i=1}^k r_i(K_n) = \sum_{i=1}^k d_i(n-t) + r_i(K_t) = d(n-t) + \sum_{i=1}^k r_i(K_t),\]where the first equality uses \cref{lemma:ranklinear}. Thus $\MM$ is nontrivial, and it follows from the same lemma that its dimensionality is at most $d$. 

    To show that the dimensionality of $\MM$ is at least $d$, it suffices by \cref{lemma:0extension} to show that addition of vertices of degree $d$ preserves $\MM$-independence. This follows immediately from the facts that addition of vertices of degree $d_i$ preserves $\MM_i$-independence, and that a graph is $\MM$-independent if and only if it can be written as the union of $\MM_i$-independent graphs.
\end{proof}
In particular, \cref{lemma:uniondimensionality} implies that if some of $\MM_1,\ldots,\MM_k$ is unbounded, then so is $\MM$.

For the threshold of the union, we have the following bound. The proof is an adaptation of~\cite[Lemma 2.1]{garamvolgyi.etal_2024a}.

\begin{lemma}\label{lemma:unionthresholdbound}
    Let $\MM$ be the union of the graph matroid families $\MM_1,\ldots,\MM_k$. Suppose that for each $i \in \{1,\ldots,k\}$, $\MM_i$ is nontrivial and has dimensionality ${\dimensionality{}}_i$ and threshold ${\threshold{}}_i$. Let $\threshold{} = \sum_{i=1}^k \max({\threshold{}}_i,2{\dimensionality{}}_i)$.
    \begin{enumerate}
        \item If a graph $G$ contains edge-disjoint $\MM_i$-rigid spanning subgraphs $G_i, i \in \{1,\ldots,k\}$, then $G$ is $\MM$-rigid.
        \item Conversely, if $G$ is $\MM$-rigid and has at least $\threshold{}$ vertices, then $G$ contains edge-disjoint $\MM_i$-rigid spanning subgraphs $G_i, i \in \{1,\ldots,k\}$.
        \item The threshold of $\MM$ is at most $\threshold{}$.
    \end{enumerate}
\end{lemma}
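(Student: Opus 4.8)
The plan is to prove the three parts in order, deriving (c) as an immediate consequence of (a), (b) and the Gluing Lemma. For part (a), suppose $G$ contains edge-disjoint $\MM_i$-rigid spanning subgraphs $G_i$. Since $\MM(G) = \bigcup_{i=1}^k \MM_i(G)$, a maximal $\MM_i$-independent subset $F_i$ of $E(G_i)$ has $\bigcup_i F_i$ independent in $\MM(G)$, and since the $G_i$ are spanning and $\MM_i$-rigid, $|F_i| = r_i(K_{V(G)})$, where $r_i$ is the rank function of $\MM_i$. By the union formula $r(G) \geq \sum_i r_i(K_{V(G)}) = r(K_{V(G)})$ (the last equality because the rank of the union on $K_{V(G)}$ is exactly the sum of the ranks, which follows from the description of independent sets in a union together with \cref{lemma:ranklinear} applied to each $\MM_i$ and to $\MM$). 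Hence $G$ is $\MM$-rigid. One should double-check that the $F_i$ can be chosen to be pairwise disjoint, which holds because the $G_i$ are edge-disjoint.

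For part (b), the core is a counting/uncrossing argument in the style of \cite[Lemma 2.1]{garamvolgyi.etal_2024a}, using the matroid union rank formula
\[
r(G) = \min_{E_0 \subseteq E(G)} \Bigl( |E(G) \setminus E_0| + \sum_{i=1}^k r_i(E_0) \Bigr).
\]
Assume $G$ is $\MM$-rigid on $n \geq \threshold{}$ vertices, so $r(G) = r(K_{V(G)}) = \sum_i r_i(K_{V(G)})$ by \cref{lemma:ranklinear}. The standard argument decomposes a maximum $\MM$-independent subgraph $F$ of $G$ as $F = \bigcup_i F_i$ with $F_i$ independent in $\MM_i$; one then needs to enlarge each $F_i$ to an $\MM_i$-rigid spanning subgraph $G_i \subseteq G$ while keeping the $G_i$ edge-disjoint. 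Here the bound $\threshold{} = \sum_i \max(\threshold{}_i, 2\dimensionality{}_i)$ enters: because $n$ is this large, one can argue (using \cref{lemma:0extension} and \cref{lemma:ranklinear} for each $\MM_i$) that a maximum $\MM_i$-independent subgraph of $G$ that is a subgraph of $F$ already spans $V(G)$ and has the right rank, so $r_i(F_i) = r_i(K_{V(G)})$, i.e.\ $G_i := G[F_i]$ is $\MM_i$-rigid and spanning; their edge-disjointness is inherited from the decomposition of $F$. The factor $2\dimensionality{}_i$ appears to guarantee that low-degree vertices cannot be "wasted" — a vertex of $G_i$ of degree less than $\dimensionality{}_i$ contributes fewer than $\dimensionality{}_i$ to the rank, and with $n$ large enough relative to $\sum_i \dimensionality{}_i$ one shows the deficiency would contradict $\MM$-rigidity of $G$.

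Finally, part (c) follows formally: to bound $\threshold{\MM}$ it suffices to exhibit an $\MM$-circuit on $\threshold{} + 1$ vertices with minimum degree $\dimensionality{\MM} + 1 = \sum_i \dimensionality{}_i + 1$. Take $G = K_{V}$ with $|V| = \threshold{}$; then $K_V$ is $\MM$-rigid, so by (b) it has edge-disjoint $\MM_i$-rigid spanning subgraphs $G_i$, and one augments $K_V$ by a new vertex $v$ joined to $\dimensionality{\MM}+1$ vertices of $V$ in such a way that $v$ gets $\dimensionality{}_i$ edges usable by $\MM_i$ plus one extra edge; the extra edge then lies in an $\MM$-circuit within $G_i + (\text{that edge at } v)$, and a minimal such circuit has the desired parameters, giving $\threshold{\MM} \leq \threshold{}$.

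The main obstacle I expect is part (b): making the uncrossing/augmentation argument fully rigorous, in particular verifying that the specific threshold $\sum_i \max(\threshold{}_i, 2\dimensionality{}_i)$ — rather than some smaller or larger quantity — is exactly what is needed to force each $F_i$ to span and to have maximal $\MM_i$-rank, and handling the case distinctions between the $\threshold{}_i$ and $2\dimensionality{}_i$ terms cleanly. Parts (a) and (c) are comparatively routine given \cref{lemma:ranklinear}, \cref{lemma:0extension}, \cref{lemma:gluing}, and the matroid union rank formula.
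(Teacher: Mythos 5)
Your part (a) matches the paper's argument and is fine. The trouble starts with parts (b) and (c).

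\textbf{Part (b).} Your uncrossing sketch never actually establishes the key equality $r(K_n) = \sum_{i=1}^k r_i(K_n)$ for $n \geq \threshold{}$. The matroid union formula only gives $r(K_n) \leq \sum_i r_i(K_n)$; equality is exactly the nontrivial content, and it is equivalent to exhibiting edge-disjoint bases of the $\MM_i(K_n)$. You gesture at ``a maximum $\MM_i$-independent subgraph of $F$ already spans and has the right rank'' and speculate that the factor $2\dimensionality{}_i$ prevents low-degree vertices from being wasted, but this is not an argument. The paper instead proves (b) by an \emph{explicit construction}: partition $V(K_n)$ into sets $V_i^1, V_i^2$ with $|V_i^j| \geq \dimensionality{}_i$ and $|V_i^1 \cup V_i^2| \geq \threshold{}_i$ (this is precisely where $n \geq \sum_i \max(\threshold{}_i, 2\dimensionality{}_i)$ is used), take $G_i$ to be the complete graph on $V_i^1 \cup V_i^2$ together with certain crossing edges, and verify via \cref{lemma:ranklinear} and \cref{lemma:0extension} that each $G_i$ is $\MM_i$-rigid and that the $G_i$ are pairwise edge-disjoint. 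Once this is done, the reduction from general $G$ to $K_n$ (via $r(G) = r(K_n) = \sum_i r_i(K_n)$ and the matroid union rank formula) is straightforward. Your proposal skips the construction that makes the rest go through.

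\textbf{Part (c).} Your argument locates an $\MM_i$-circuit inside $G_i$ plus an extra edge at $v$, but that is the wrong object: an $\MM_i$-circuit is not in general an $\MM$-circuit, since the union matroid $\MM$ has more independent sets than any individual $\MM_i$. The bound on $\threshold{\MM}$ requires exhibiting an \emph{$\MM$-circuit} on at most $\threshold{}+1$ vertices with minimum degree $\dimensionality{\MM}+1$. The paper does this by taking the union $G$ of edge-disjoint $\MM_i$-rigid spanning subgraphs of $K_{\threshold{}}$, attaching a new vertex $v$ of degree $\dimensionality{} = \sum_i \dimensionality{}_i$ (distributed $\dimensionality{}_i$ edges per $G_i$), observing via part (a) that the resulting $G'$ is $\MM$-rigid, and then noting that any further edge $e$ at $v$ lies in an $\MM$-circuit $C \subseteq G'+e$; the definitions of $\dimensionality{\MM}$ and $\threshold{\MM}$ then force $d_C(v) = \dimensionality{}+1$ and $\threshold{\MM} \leq |V(C)| - 1 \leq \threshold{}$. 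Working with the union matroid $\MM$ directly, rather than drilling down to an $\MM_i$, is essential here.

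You should also note that part (c) of the paper does not use the Gluing Lemma, contrary to your plan.
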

\begin{proof}
    Part \textit{(a)} follows from the observation that
    \[r(G) \leq r(K_{V(G)}) \leq \sum_{i=1}^k r_i(K_{V(G)}) = \sum_{i=1}^k r_i(G_i) \leq r(G),\] where $r$ and $r_1,\ldots,r_k$ denote the rank function of $\MM$ and $\MM_1,\ldots,\MM_k$, respectively.

    For \textit{(b)}, let $n$ denote the number of vertices of $G$. It suffices to consider the case $G = K_n$. Indeed, if $K_n$ contains edge-disjoint $\MM_i$-rigid spanning subgraphs, then \[r(G) = r(K_n) = \sum_{i=1}^k r_i(K_n),\] and hence $G$ must also contain suitable spanning subgraphs.
    We construct the subgraphs $G_1,\ldots,G_k$ as follows. Let us partition the vertex set of $K_n$ into sets of vertices $V_i^j, i \in \{1,\ldots,k\}, j \in \{1,2\}$ such that for all $i \in \{1,\ldots,k\}$, $|V_i^1| \geq {\dimensionality{}}_i,|V_i^2| \geq {\dimensionality{}}_i,$ and $|V_i^1 \cup V_i^2| \geq {\threshold{}}_i$. For each $i \in \{1,\ldots,k\}$, let $G_i$ consist of the complete graph on $V_i^1 \cup V_i^2$, plus the edge sets \[\bigcup_{\ell<i}\left(E(V^1_i,V^1_\ell) \cup E(V^2_i,V^2_\ell)\right) \cup \bigcup_{\ell > i}\left(E(V^1_i,V^2_\ell) \cup E(V^2_i,V^1_\ell) \right).\] Now by \cref{lemma:ranklinear}, $G_i$ is $\MM_i$-rigid, since it has a spanning subgraph obtained from a complete graph on at least ${\threshold{}}_i$ vertices by adding vertices of degree at least ${\dimensionality{}}_i$. It is also easy to see that $G_i$ and $G_j$ are edge-disjoint for $i \neq j$.

    For \textit{(c)}, fix edge-disjoint $\MM_i$-rigid spanning subgraphs $G_i, i \in \{1,\ldots,k\}$ in $K_{\threshold{}}$, and let $G$ denote the union of $G_1,\ldots,G_k$. Since $\threshold{} \geq {\threshold{}}_i$, adding a new vertex of degree $d_i$ to $G_i$ also results in an $\MM_i$-rigid graph, and thus adding a new vertex $v$ of degree $d$ to $G$ also results in a graph $G'$ that has edge-disjoint $\MM_i$-rigid spanning subgraphs for $i \in \{1,\ldots,k\}$. By \textit{(a)}, this implies that $G'$ is $\MM$-rigid. Hence adding another edge between $v$ and some vertex of $G$ must create an $\MM$-circuit $C$ in $G'$ such that $1 \leq d_C(v) \leq d+1$. By the definitions of the dimensionality and the threshold, we must have $d_C(v) = d+1$ and \[\threshold{\MM} \leq |V(C)| - 1 \leq |V(G')| - 1 = \threshold{}.\vspace{-1em}\] 
\end{proof}

\cref{lemma:unionthresholdbound}(c) gives a tight bound on the threshold for the $k$-fold union of the family $\graphicmatroid$ of graphic matroids. However, the bound is not tight in general, even for count matroids. See~\cite[Lemma 5.4]{garamvolgyi.etal_2024a} for a better bound for the union $\mathcal{R}_d \cup \graphicmatroid$.

There is no clear way to define the dual of a graph matroid family $\MM$. Indeed, taking the dual matroid $\MM(G)^*$ for each graph $G$ gives a well-defined, but not compatible family of matroids. That is, the restriction of $\MM(G)^*$ to the edge set of a subgraph $H$ of $G$ does not, in general, coincide with $\MM(H)^*$; instead, it is dual to the matroid obtained from $\MM(G)$ by contracting (in a matroidal sense) the edges not in $H$. 

Similarly, we cannot, in general, define the contraction of a graph matroid family. However, given an unbounded graph matroid family $\MM$, we can define another graph matroid family $\MM'$ by letting $\MM'(K_n)$ be the matroid obtained by the contraction of a vertex star in $\MM(K_{n+1})$. More explicitly, a graph is $\MM'$-independent if and only if its cone graph is $\MM$-independent, where the \emph{cone graph} of a graph $G$ is obtained by adding a new vertex to $G$ that is connected to every vertex of $G$. 

Let us call the construction of $\MM'$ from $\MM$ the \emph{de-coning} operation. 
It is a classical result of Whiteley~\cite{whiteley_1983} that by applying this operation to the family of $(d+1)$-dimensional generic rigidity matroids, we recover the family of $d$-dimensional generic rigidity matroids. It is also not difficult to see that by de-coning the family of $(k,\ell)$-count matroids, where $k \geq 2$, we obtain the family of $(k-1,\ell-k)$-count matroids. Since we will not investigate this operation further, we omit the details.

\section{The Lovász-Yemini and Whitney properties}\label{section:main}

We are now ready to begin our investigation of the main concepts of the paper: the Lovász-Yemini and the Whitney properties. We start by giving a rigorous definition of reconstructibility with respect to a graph matroid family. 

Let $\MM$ be a graph matroid family, and let $G$ be a graph. We say that $G$ is \emph{$\MM$-reconstructible} if $\mathcal{M}(G)$ uniquely determines $G$, in the sense that if $H$ is another graph and $\psi: E(G) \rightarrow E(H)$ is an isomorphism between $\MM(G)$ and $\MM(H)$, then $\psi$ is induced by a graph isomorphism. (A graph isomorphism $\varphi: V(G) \to V(H)$ \emph{induces} $\psi$ if $\psi(uv) = \varphi(u)\varphi(v)$ holds for every edge $uv \in E(G)$.) 

We say that a graph matroid family $\MM$ has the \emph{Whitney property (with constant $c$)} if there is a nonnegative integer $c$ such that every $c$-connected graph is $\MM$-reconstructible.
Similarly, let us say that a graph matroid family $\MM$ has the \emph{Lovász-Yemini property (with constant $c$)} if there is a nonnegative integer $c$ such that every $c$-connected graph is $\MM$-rigid.

In the next subsection we prove \cref{theorem:main}, which says that for an unbounded graph matroid family, the Whitney property is equivalent to the Lovász-Yemini property. The bounded case is investigated in \cref{subsection:boundedcase}: we observe that every bounded graph matroid family has the Lovász-Yemini property, and we give a characterization of the Whitney property for a bounded graph matroid family $\MM$ in terms of the existence of certain $\MM$-circuits. In \cref{subsection:unionreconstructibility}, we show that taking unions of graph matroid families preserves both the Whitney and the Lovász-Yemini properties. Finally, in \cref{subsection:extendable}, we define the class of $1$-extendable graph matroid families and show that every $1$-extendable graph matroid family has the Lovász-Yemini (and thus the Whitney) property.

\subsection{The unbounded case}

We begin by proving the easy direction of \cref{theorem:main}; namely, that for an unbounded graph matroid family, the Whitney property implies the Lovász-Yemini property. The key observation is the following.

\begin{lemma}\label{lemma:reconstructiblebridge}
    If $\MM$ is an unbounded graph matroid family and $G$ is an $\MM$-reconstructible graph with at least two edges, then $\MM(G)$ is bridgeless.
\end{lemma}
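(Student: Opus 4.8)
The plan is to prove the contrapositive: assuming $\MM(G)$ has a bridge $e = uv$, I will exhibit a graph $H$ and an isomorphism $\psi\colon E(G) \to E(H)$ between $\MM(G)$ and $\MM(H)$ that is \emph{not} induced by any graph isomorphism, which shows that $G$ is not $\MM$-reconstructible. The guiding idea is to ``relocate'' the bridge $e$ to a position where it is again a bridge, while changing the number of vertices of the underlying graph.

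The first step is to record a relocation principle for coloops. Suppose $H$ is a graph containing $G-e$ as a subgraph, with $E(H) = (E(G) \setminus \{e\}) \cup \{e''\}$ for some edge $e'' \notin E(G-e)$, and such that $e''$ is a bridge in $\MM(H)$. Then the map $\psi\colon E(G) \to E(H)$ that fixes $E(G-e)$ pointwise and sends $e$ to $e''$ is a matroid isomorphism: by compatibility both $\MM(G)$ and $\MM(H)$ restrict to $\MM(G-e)$ on the common edge set $E(G-e)$, and since $e$ is a coloop of $\MM(G)$ while $e''$ is a coloop of $\MM(H)$, a subset $S \cup \{e\} \subseteq E(G)$ with $S \subseteq E(G-e)$ is independent in $\MM(G)$ precisely when $S$ is independent in $\MM(G-e)$, precisely when $S \cup \{e''\}$ is independent in $\MM(H)$.

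It then remains to choose $H$ so that $e''$ is a bridge of $\MM(H)$ but $|V(H)| \neq |V(G)|$; the latter rules out any bijection $V(G) \to V(H)$, hence any graph isomorphism inducing $\psi$. Here I use that, because $\MM$ is unbounded, every $\MM$-circuit has minimum degree at least two (as noted in the proof of \cref{lemma:unboundedforest}, via \cref{lemma:0extension}), so any edge incident to a vertex of degree one lies in no $\MM$-circuit subgraph and is therefore a bridge. I split into two cases according to whether $e$ forms a connected component of $G$. If it does not — equivalently, $G-e$ still contains an endpoint of $e$ — let $H$ be the disjoint union of $G-e$ with a fresh edge $e''$ on two new vertices; then $e''$ is a bridge of $\MM(H)$ and $|V(H)| = |V(G-e)| + 2 > |V(G)|$. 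If instead $e$ is an isolated edge of $G$, then since $|E(G)| \geq 2$ the graph $G-e$ contains an edge $xy$; put $H = (G-e) + xz$ for a new vertex $z$ and take $e'' = xz$, which is a bridge of $\MM(H)$, and now $|V(H)| = |V(G-e)| + 1 = |V(G)| - 1 < |V(G)|$. Either way $|V(H)| \neq |V(G)|$, so $\psi$ is not induced by a graph isomorphism, and $G$ is not $\MM$-reconstructible.

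The argument is elementary once the right $H$ is identified, so I do not anticipate a genuine obstacle. The only point requiring care is the vertex bookkeeping — in particular, remembering that $G-e$ has its newly isolated vertices deleted — which is exactly what forces the case split: a single uniform choice of $H$ (either the disjoint new edge, or the new pendant) fails in precisely one of the two extreme configurations of the bridge $e$.
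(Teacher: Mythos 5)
Your proof is correct and rests on the same key idea as the paper's: relocate the coloop $e$ to a fresh position where it remains a coloop (so the matroids are isomorphic via the obvious edge bijection), but with a different vertex count, so the bijection cannot be induced by a graph isomorphism. The paper sidesteps your case split on whether $e$ is an isolated edge by producing \emph{two} auxiliary graphs at once --- $G_1 = G-e+vu_1$ (a pendant at a vertex $v$ not incident to $e$) and $G_2 = G-e+u_1u_2$ (a fresh disjoint edge), with $u_1,u_2$ new --- and observing that $|V(G_2)| = |V(G_1)|+1$ in all cases, so $G$ can be graph-isomorphic to at most one of them; hence at least one of the two natural matroid isomorphisms out of $\MM(G)$ is not induced. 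Your version, while needing the case analysis to ensure $|V(H)| \neq |V(G)|$ (precisely because the disjoint-edge construction fails when $e$ is itself a component, where it would reproduce a graph isomorphic to $G$), is a self-contained single-$H$ construction and equally valid.
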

\begin{proof}
    Suppose in the contrapositive that $\MM(G)$ contains a bridge $e$. Let $v \in V(G)$ be a vertex not incident to $e$, and let $u_1,u_2$ be a pair of new vertices. It follows from \cref{lemma:0extension} that $vu_1$ is a bridge in $G_1 = G-e+vu_1$, and similarly, $u_1u_2$ is a bridge in $G_2 = G-e+u_1u_2$. Hence $\MM(G)$, $\MM(G_1)$ and $\MM(G_2)$ are all isomorphic. Since $G$ cannot be isomorphic to both $G_1$ and $G_2$, we conclude that $G$ is not $\MM$-reconstructible.     
\end{proof}

\begin{lemma}\label{lemma:whitneytoLY}
    If an unbounded graph matroid family $\MM$ has the Whitney property with constant $c$, then it has the Lovász-Yemini property with constant $c$.
\end{lemma}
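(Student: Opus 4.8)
The plan is to prove the statement directly in the form: if $\MM$ has the Whitney property with some constant $c$, then every $c$-connected graph is $\MM$-rigid. Since the Whitney property with constant $c$ implies the Whitney property with every constant $c'\ge c$ (a $c'$-connected graph is $c$-connected, hence $\MM$-reconstructible), I may assume from the outset that $c\ge 2$; this normalization is needed only so that the auxiliary graph constructed below has at least two edges and \cref{lemma:reconstructiblebridge} applies to it.

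So fix a $c$-connected graph $G$ and suppose, for contradiction, that $G$ is not $\MM$-rigid, i.e. $r(G)<r(K_{V(G)})$. The first step is to extract from non-rigidity a single rank-increasing non-edge. Because $\MM(G)$ is by definition the restriction of $\MM(K_{V(G)})$ to $E(G)$, the inequality $r(G)<r(K_{V(G)})$ says precisely that $E(G)$ does not span $\MM(K_{V(G)})$; hence there is an edge $xy\in E(K_{V(G)})\setminus E(G)$ with $r(G+xy)=r(G)+1$. Put $G':=G+xy$.

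The second step is to run $G'$ through \cref{lemma:reconstructiblebridge} and reach a contradiction. Adding an edge cannot decrease connectivity, so $G'$ is still $c$-connected; and since $c\ge 2$, the graph $G$ (and therefore $G'$) has at least three vertices and minimum degree at least $2$, hence at least two edges. By the Whitney property $G'$ is $\MM$-reconstructible, so \cref{lemma:reconstructiblebridge} shows $\MM(G')$ is bridgeless. On the other hand $G'-xy=G$ and $r(G)=r(G')-1$ by the choice of $xy$, so $xy$ is a bridge of $\MM(G')$ — contradiction. This shows $G$ must be $\MM$-rigid, so $\MM$ has the Lovász--Yemini property (with the same constant $c$, after the normalization $c\ge 2$). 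I do not anticipate a real obstacle: the whole argument is short, with the only point of care being the "at least two edges" hypothesis of \cref{lemma:reconstructiblebridge}, which is why I normalize $c\ge 2$ first; the conceptual content sits entirely in \cref{lemma:reconstructiblebridge} plus the observation that a non-rigid graph acquires a matroid bridge once a rank-increasing non-edge is added.
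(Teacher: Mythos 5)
Your proof is correct and takes essentially the same route as the paper: extract from non-rigidity a non-edge $xy$ whose addition raises the rank by $1$ (hence is a bridge of $\MM(G+xy)$), note that $G+xy$ is still $c$-connected and hence $\MM$-reconstructible, and invoke \cref{lemma:reconstructiblebridge} for the contradiction. The only cosmetic difference is your normalization $c\ge 2$; the paper gets the "at least two edges" hypothesis for free, since a non-$\MM$-rigid graph is non-complete, hence has at least three vertices and (having no isolated vertices) at least two edges.
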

\begin{proof}
    Let us suppose, for a contradiction, that every $c$-connected graph is $\MM$-reconstructible, but there is some $c$-connected graph that is not $\MM$-rigid. Then there is a pair of nonadjacent vertices $u,v$ in $G$ such that $uv$ is a bridge in $\MM(G+uv)$. Now $G+uv$ is a $c$-connected, and hence $\MM$-reconstructible graph on at least two edges, such that $\MM(G+uv)$ contains a bridge. But this contradicts \cref{lemma:reconstructiblebridge}. 
\end{proof}

Our main goal in this subsection is to prove the other direction of \cref{theorem:main}, in the following, more precise form. 

\begin{theorem}\label{theorem:mainprecise}
    Let $\MM$ be a nontrivial, unbounded graph matroid family with threshold $t$ and rank function $r$. If $\MM$ has the Lovász-Yemini property with constant $c$, then $\MM$ has the Whitney property with constant $c' = \max(c,t) + r(K_{\max(c,t)-1}) + 2$.
\end{theorem}

Our proof relies on the notion of vertical connectivity of matroids, which we introduce next.

Let $\mathcal{M}_0 = (E,r)$ be a matroid with rank function $r$, and let $k$ be a positive integer.
We say that a bipartition $(E_1,E_2)$ of $E$ is a \emph{vertical $k$-separation} of $\mathcal{M}_0$ if 
\begin{equation}\label{eq:vertseparator1}
    r(E_1), r(E_2) \geq k    
\end{equation}
and 
\begin{equation}\label{eq:vertseparator2}
    r(E_1) + r(E_2) \leq r(E) + k -1 
\end{equation}
hold. Note that \cref{eq:vertseparator1} and \cref{eq:vertseparator2} together imply $r(E_i) < r(E)$ for $i \in \{1,2\}$.
We say that $\mathcal{M}_0$ is \emph{vertically $k$-connected} if $k \leq r(\MM_0)$ and $\mathcal{M}_0$ does not have vertical $k'$-separations for any positive integer $k' < k$. For $k=1$, the latter condition is vacuous, and hence every matroid with positive rank is vertically $1$-connected.

The following lemma shows that to certify that a matroid is not vertically $k$-connected, it suffices to consider ``vertical separations'' $E = E_1 \cup E_2$ in which $E_1$ and $E_2$ are not necessarily disjoint. 

\begin{lemma}\label{lemma:verticalconnsimpler}
    Let $\mathcal{M}_0 = (E,r)$ be a matroid, and let $E_1,E_2 \subseteq E$ be subsets of $E$, not necessarily disjoint, with $E = E_1 \cup E_2$. Suppose that $r(E_1), r(E_2) \geq k$ and $r(E_1) + r(E_2) \leq r(E) + k - 1$. Then $\mathcal{M}_0$ has a vertical $k'$-separation for some $k' \leq k$.
\end{lemma}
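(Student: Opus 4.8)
The plan is to turn the overlapping pair $(E_1,E_2)$ into an honest bipartition by sweeping the overlap onto one side: set $F_1 = E_1$ and $F_2 = E \setminus E_1$. Since $E_1 \cup E_2 = E$, we have $F_2 \subseteq E_2$, so monotonicity of the rank function gives $r(F_2) \le r(E_2)$; thus $F_2$ retains enough of the ``bulk'' of $E_2$. The subtlety is that one should aim not for the parameter $k$ but for
\[
k' := r(F_1) + r(F_2) - r(E) + 1,
\]
one more than the matroid connectivity of the separation $(F_1,F_2)$. The goal is then to show that $(F_1,F_2)$ is a vertical $k'$-separation and that $1 \le k' \le k$.

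First I would record the consequences of the hypotheses that are needed. Submodularity of $r$ gives $r(F_1) + r(F_2) \ge r(F_1 \cup F_2) + r(F_1 \cap F_2) = r(E)$, so $k'$ is a positive integer. From $r(E_1) \ge k$ and $r(E_1) + r(E_2) \le r(E) + k - 1$ we obtain $r(E_2) \le r(E) - 1$, and symmetrically $r(E_1) \le r(E) - 1$. Combined with $r(F_2) \le r(E_2)$, this yields $r(F_1) + r(F_2) \le r(E_1) + r(E_2) \le r(E) + k - 1$, hence $k' \le k$.

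It then remains to verify the three defining inequalities of a vertical $k'$-separation for $(F_1, F_2)$. The condition $r(F_1) + r(F_2) \le r(E) + k' - 1$ holds with equality, by the definition of $k'$. For the rank bounds, $k' = r(F_1) + r(F_2) - r(E) + 1 \le r(F_1)$ because $r(F_2) \le r(E_2) \le r(E) - 1$, and similarly $k' \le r(F_2)$ because $r(F_1) = r(E_1) \le r(E) - 1$; in particular $F_1$ and $F_2$ are both nonempty, their ranks being at least $k' \ge 1$. This produces the desired vertical $k'$-separation with $k' \le k$.

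The proof uses only monotonicity and submodularity of the rank function, so I do not expect a genuine obstacle. The one place that requires care is precisely the choice of parameter: after deleting the overlap $E_1 \cap E_2$, the rank on the $F_2$ side may drop below $k$, so $(F_1, F_2)$ need not be a vertical $k$-separation; one must instead target $k' = r(F_1) + r(F_2) - r(E) + 1$ and then check that this smaller value still satisfies $k' \le k$, which is exactly where the hypothesis $r(E_1) + r(E_2) \le r(E) + k - 1$ enters.
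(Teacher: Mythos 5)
Your proof is correct and follows essentially the same route as the paper: both pass to the bipartition $(E_1,\,E \setminus E_1)$ and show it is a vertical $k'$-separation for some $k' \le k$. The only difference is the choice of $k'$: the paper takes $k' = k - \bigl(r(E_2) - r(E_2 \setminus E_1)\bigr)$, while you take the smallest value for which the rank inequality holds with equality, namely $k' = r(E_1) + r(E \setminus E_1) - r(E) + 1$; your version spells out the verification in more detail (in particular the positivity of $k'$ via submodularity), but the underlying idea is the same.
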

\begin{proof}
    Let $E'_2 = E_2 - E_1$. Then $(E_1,E'_2)$ is a bipartition of $E$, and it is straightforward to check that it is a vertical $k'$-separation for $k' = k - (r(E_2) - r(E_2'))$.
\end{proof}

It is well-known that a graph without isolated vertices is $k$-connected if and only if its graphic matroid is vertically $k$-connected (see, e.g.,~\cite[Theorem 8.6.1]{oxley_2011}).
The main ingredients to our proof of \cref{theorem:mainprecise}, \cref{theorem:vertconntoconn} and \cref{theorem:vertexredundantrigidtovertconn} below, are partial generalizations of this fact to arbitrary graph matroid families. \cref{theorem:vertconntoconn} states that given an unbounded, nontrivial graph matroid family $\MM$, if $\MM(G)$ has sufficiently high vertical connectivity, then $G$ must have high vertex-connectivity (provided that it has no isolated vertices).
Conversely, \cref{theorem:vertexredundantrigidtovertconn} shows that if $G$ has high vertex-connectivity and is highly vertex-redundantly $\MM$-rigid, then $\MM(G)$ must have high vertical connectivity. Finally, if $\MM$ has the Lovász-Yemini property, then high vertex-connectivity implies high vertex-redundant $\MM$-rigidity. By combining this triangle of implications with ideas from previous Whitney-type results, we can deduce that the Lovász-Yemini property implies the Whitney property.

Let us note that this proof strategy roughly follows the strategy used in~\cite{garamvolgyi.etal_2024} to show that the family of generic $C^1_
2$-cofactor matroids has the Whitney property. The crucial difference is that in that case, a combinatorial characterization of the rank function was available, while here we can only use the assumptions that $\MM$ is unbounded, nontrivial, and has the Lovász-Yemini property.

We start by giving a bound on the number of vertices of a graph $G$, provided that $\MM(G)$ is vertically $k$-connected. The proof of this simple statement turns out to be surprisingly subtle.

\begin{lemma}\label{lemma:verticalconnbound}
    Let $G = (V,E)$ be a graph, and let $\MM$ be a nontrivial, unbounded graph matroid family with dimensionality $\dimensionality{}$.  
    Let $k \geq 2$ be an integer. If $\MM(G)$ is vertically $k$-connected, then its minimum degree is at least $k+ \dimensionality{} -1$. In particular, $|V| \geq k + \dimensionality{}$.
\end{lemma}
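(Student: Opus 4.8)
The plan is to establish the equivalent statement that \emph{every} vertex of $G$ has degree at least $k + \dimensionality{} - 1$; the bound $|V| \geq k + \dimensionality{}$ then follows at once, since in the simple graph $G$ a vertex of that degree has at least $k + \dimensionality{} - 1$ distinct neighbours. So fix a vertex $v$, write $\delta = d_G(v)$, and suppose towards a contradiction that $\delta \leq k + \dimensionality{} - 2$. The goal is to exhibit a vertical $k'$-separation of $\MM(G)$ for some $k' < k$, contradicting vertical $k$-connectivity; recall that the latter hypothesis also gives $r(G) = r(\MM(G)) \geq k$ and forbids any vertical $k'$-separation with $k' < k$. Here $r$ is the rank function of $\MM$, and $G - v$ denotes $G$ with $v$ and its incident edges deleted.

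First I would record three estimates involving the rank increase $\rho := r(G) - r(G-v)$ obtained by adding $v$ back to $G-v$. (i) $\rho \leq \delta$, since $G$ is $G-v$ plus $\delta$ edges. (ii) $\rho \geq \min(\delta, \dimensionality{})$: re-attaching $v$ to $\min(\delta,\dimensionality{})$ of its neighbours produces a subgraph $H \subseteq G$ in which $v$ has degree at most $\dimensionality{}$, so by \cref{lemma:0extension} the new edges are bridges of $\MM(H)$ and $r(G) \geq r(H) = r(G-v) + \min(\delta,\dimensionality{})$. (iii) $r(G) \geq \delta$, because $\partial_G(v)$ induces a star, which is a forest and hence $\MM$-independent by \cref{lemma:unboundedforest}(a) --- this is the one place the unboundedness of $\MM$ enters. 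Setting $j := \delta - \rho + 1$, estimate (i) yields $j \geq 1$; estimate (ii) together with $\delta \leq k + \dimensionality{} - 2$ yields $j \leq k-1$ whenever $\delta \geq \dimensionality{}$; and if $\delta < \dimensionality{}$, then \cref{lemma:0extension} applied to $v$ inside $G$ itself gives $\rho = \delta$, hence $j = 1 \leq k-1$ using $k \geq 2$. Thus $1 \leq j < k$ in all cases.

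Next I would test the naive bipartition $F_1 = E(G-v)$, $F_2 = \partial_G(v)$ of $E(G)$, aiming to apply \cref{lemma:verticalconnsimpler} with parameter $j$. Here $r(F_2) = \delta \geq j$, $r(F_1) = r(G-v) = r(G) - \rho$, and $r(F_1) + r(F_2) = r(G-v) + \delta = r(G) + (j-1)$, so both $r(F_2) \geq j$ and the sum condition hold automatically; the only requirement that can fail is $r(F_1) \geq j$, which is equivalent to $r(G) \geq \delta + 1$. If $r(G) \geq \delta+1$, we are done. Otherwise $r(G) = \delta$ by (iii), and I would instead move one edge $e \in \partial_G(v)$ into the first part: with $F_1 = E(G-v) \cup \{e\}$ and $F_2 = \partial_G(v) \setminus \{e\}$, the vertex $v$ has degree $1$ in $G[F_1]$, so $e$ is a bridge of $\MM(G[F_1])$ by \cref{lemma:0extension}, giving $r(F_1) = r(G-v) + 1 = j$; meanwhile $r(F_2) = \delta - 1$ (still a forest), the sum is again $r(G) + (j-1)$, and the condition $r(F_2) \geq j$ now reduces to $\rho \geq 2$. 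This last inequality holds because $r(G) = \delta \geq k \geq 2$, so by (ii) $\rho \geq \min(\delta,\dimensionality{}) \geq 2$ when $\dimensionality{} \geq 2$, while $\dimensionality{} = 1$ is impossible here (it would give $\delta \leq k-1$, contradicting $\delta \geq k$). In both cases \cref{lemma:verticalconnsimpler} then produces a vertical $k'$-separation with $k' \leq j < k$, the desired contradiction. (The two cases can be merged by taking $F_1 = E(G-v) \cup A$ with $A \subseteq \partial_G(v)$ of size $\max(0, \delta + 1 - r(G)) \in \{0, 1\}$.)

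I expect the main obstacle to be exactly the point where the naive separation breaks down: this happens precisely when the star at $v$ already spans $\MM(G)$, i.e.\ $r(G) = \delta$, and getting past it needs both the partition tweak above and the separate check that $\rho \geq 2$ in that regime --- which is where the standing hypotheses $k \geq 2$ and $\dimensionality{} \geq 1$ (nontriviality) are genuinely used. Everything else is routine bookkeeping with \cref{lemma:0extension}, \cref{lemma:unboundedforest}, and \cref{lemma:verticalconnsimpler}.
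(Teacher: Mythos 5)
Your proof is correct, and it takes a genuinely different route from the paper's. The paper fixes a \emph{minimum-degree} vertex $v$, isolates a small set $E_1$ of exactly $k_0 = d_G(v) - \dimensionality{} + 1$ edges at $v$, and shows $(E_1, E \setminus E_1)$ is a vertical $k_0$-separation; the sticking point there is proving $r(E \setminus E_1) \geq k_0$, which splits into cases (find a high-degree vertex avoiding $E_1$, or $G$ complete, handled via \cref{lemma:minimumrank} and a separate treatment of the graphic case). You instead take the complementary split $F_1 = E(G-v)$, $F_2 = \partial_G(v)$ (or its one-edge shift), and work with the sharper parameter $j = d_G(v) - \rho + 1 \leq k_0$ tied to the actual rank drop $\rho = r(G) - r(G-v)$ rather than to $\dimensionality{}$. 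This lets you take $v$ arbitrary rather than of minimum degree, and replaces the appeal to \cref{lemma:minimumrank} with a direct edge-transfer patch in the degenerate case $r(G) = d_G(v)$ (where you correctly observe that $\dimensionality{}=1$ is already ruled out, forcing $\rho \geq 2$). The bookkeeping is a bit heavier, but the argument is self-contained modulo \cref{lemma:0extension,lemma:unboundedforest,lemma:verticalconnsimpler} and does not need the minimum-rank machinery; both proofs are sound.
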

\begin{proof}
    Let $r$ denote the rank function of $\MM$. 
    Note that since $\MM(G)$ is vertically $2$-connected, it is bridgeless, and hence it follows from \cref{lemma:0extension} that every vertex of $G$ has degree at least $\dimensionality{}+1$. 
    
    Let $v \in V$ be a vertex of smallest degree in $G$. Set $k_0 = d_G(v) - \dimensionality{} + 1$, and let $E_1$ be a set of $k_0$ edges incident to $v$. (Note that $\dimensionality{} \geq 1$ and thus $k_0 \leq d_G(v)$.)
    We will show that $(E_1,E - E_1)$ is a vertical $k_0$-separation of $\MM(G)$. Since the latter is vertically $k$-connected, it follows that $k_0 \geq k$, and hence that $d_G(v) \geq k + \dimensionality{} - 1$, as claimed.

    To see that $(E_1, E- E_1)$ is a vertical $k_0$-separation, note that by \cref{lemma:unboundedforest}, $E_1$ is independent in $\MM(G)$, and hence $r(E_1) = k_0$. Moreover, since $\deg_{E - E_0}(v) \leq d-1$, it follows from \cref{lemma:0extension} that adding any edge to $E - E_1$ incident to $v$ increases the rank, and thus we have $r(E) \geq r(E - E_1) + 1$ and \[r(E_1) + r(E - E_1) \leq r(E) + k_0 - 1.\] Hence all that is left to show is that $r(E - E_1) \geq k_0$.

    First, suppose that either $\dimensionality{} \geq 2$ or $v$ is not adjacent to every other vertex in $G$. In both cases we can find a vertex $w$ such that $d_{E-E_1}(w) = d_G(w)$, and thus, using \cref{lemma:unboundedforest} again, \[r(E - E_1) \geq d_{E - E_1}(w) = d_G(w) \geq d_G(v) \geq k_0,\] as claimed.  

    Thus we may assume that $\dimensionality{} = 1$ and that $v$ is adjacent to every vertex in $G-v$. Since $v$ has smallest degree, it follows that $G$ is a complete graph. Note that in this case $k_0 = d_G(v) = |V(G-v)|$. Now if $\mathcal{M}$ is not the family of graphic matroids, then we can use \cref{lemma:minimumrank} to deduce that $r(E-E_1) = r(G-v) \geq |V(G-v)| = k_0$, as required. Finally, if $\MM$ is the family of graphic matroids, then by the definition of vertical $k$-connectivity, we have $k + 1 \leq r(G) + 1 = |V|$, and hence the minimum degree of $G$ is $|V| - 1 = k$, as desired.
\end{proof}

\begin{proposition}\label{theorem:vertconntoconn}
    Let $G = (V,E)$ be a graph, and let $\MM$ be a nontrivial, unbounded graph matroid family with dimensionality $\dimensionality{}$ and threshold $\threshold{}$. 
    Let $k$ be an integer with $k \geq \threshold{}$. If $\MM(G)$ is vertically $(r(K_k) + 2)$-connected, then $G$ is $(k+1)$-connected.
\end{proposition}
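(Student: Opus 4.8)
The plan is to prove the contrapositive: assuming $\MM(G)$ is vertically $(r(K_k)+2)$-connected, I want to conclude that $G$ is $(k+1)$-connected. First, \cref{lemma:verticalconnbound} shows that $G$ has minimum degree at least $r(K_k)+d+1$, so in particular $|V|\ge r(K_k)+d+2\ge k+2$ (using $r(K_k)\ge k-d$, a consequence of \cref{lemma:minimumrank}); this already handles the degenerate case $|V|\le k+1$. Hence it suffices to rule out a set $S\subseteq V$ with $|S|\le k$ such that $G-S$ is disconnected. Suppose some such $S$ exists, and write $V\setminus S=V_1\sqcup V_2$ with $V_1,V_2$ nonempty and no edges between them (if $G$ itself is disconnected, take $S=\emptyset$). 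Since $|V|\ge k+2\ge t+2$, I may enlarge $S$ by moving vertices of $V_1$ and $V_2$ into it so that $t\le|S|\le k$ while keeping $V_1,V_2$ nonempty. Set $G_i=G[V_i\cup S]$ and $E_i=E(G_i)$, so that $E_1\cup E_2=E(G)$ and $E_1\cap E_2=E(G[S])$.

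The goal is then to invoke \cref{lemma:verticalconnsimpler} for the pair $(E_1,E_2)$ with parameter $k_0:=r(K_k)+1$, which requires $r(E_1),r(E_2)\ge k_0$ and $r(E_1)+r(E_2)\le r(E(G))+k_0-1$. The first condition is immediate: any $v\in V_i$ has all of its incident edges inside $E_i$, and those edges form a star, hence an $\MM$-independent set by \cref{lemma:unboundedforest}, so $r(E_i)\ge\deg_G(v)\ge r(K_k)+d+1\ge k_0$.

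The heart of the argument is the inequality $r(E_1)+r(E_2)-r(E(G))\le r(K_k)$. I would prove it by reducing the two sides of the separation to complete graphs. Working in $\MM(K_V)$, the ``overlap'' $r(A)+r(B)-r(A\cup B)$ is non-decreasing when $A$ or $B$ is enlarged (this is submodularity, equivalently monotonicity of rank under contraction). Since $E_i\subseteq E(K_{V_i\cup S})$, enlarging both sides gives
\[
r(E_1)+r(E_2)-r(E(G))\ \le\ r(K_{V_1\cup S})+r(K_{V_2\cup S})-r\bigl(E(K_{V_1\cup S})\cup E(K_{V_2\cup S})\bigr).
\]
Now $K_{V_1\cup S}$ and $K_{V_2\cup S}$ are $\MM$-rigid and share exactly the $|S|\ge t$ vertices of $S$, so by \cref{lemma:gluing} their union is $\MM$-rigid, i.e.\ has rank $r(K_V)$. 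Since $|V_1\cup S|,|V_2\cup S|,|V|$ all exceed $t$, \cref{lemma:ranklinear} evaluates the right-hand side as $d(|S|-t)+r(K_t)=r(K_{|S|})\le r(K_k)$, the last step using $|S|\le k$. This is the required bound, and then \cref{lemma:verticalconnsimpler} produces a vertical $k'$-separation of $\MM(G)$ with $k'\le r(K_k)+1<r(K_k)+2$, contradicting the hypothesis; hence $G$ is $(k+1)$-connected.

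The main obstacle is this last inequality: naive attempts run into the fact that matroid rank is submodular in the ``wrong'' direction, so one cannot bound $r(E_1)+r(E_2)$ from above by $r(E(G))+r(E_1\cap E_2)$ directly. The trick that unlocks it is the two-step enlargement to complete graphs — legitimate precisely because the overlap $r(A)+r(B)-r(A\cup B)$ only grows under enlargement — followed by the Gluing lemma, which forces the amalgamated complete graph to be $\MM$-rigid and thereby pins the right-hand side down exactly via linearity of the rank. A secondary, routine point is that $S$ can be taken with $t\le|S|\le k$ and both sides of the separation nonempty; this is exactly where the bound $|V|\ge k+2$ coming from the minimum-degree estimate is needed.
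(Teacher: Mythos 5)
Your proof is correct and reaches the same contradiction as the paper's, but the way you obtain the key inequality is genuinely different, and cleaner. Both arguments pad the separator $S$ (the paper to size exactly $k$, you to $\threshold{}\le|S|\le k$), pass to $G_i=G[V_i\cup S]$, use the Gluing lemma, and invoke \cref{lemma:verticalconnsimpler}. The paper then introduces the deficits $a_i=r(K_{V_i\cup S})-r(G_i)$ and $b_i=r(K_{V_i\cup S})-r(G_i+K_S)$, orders them so that $a_1-b_1\le a_2-b_2$, and proves the two bounds $r(G_i)\ge r(K_k)-(a_2-b_2)+1$ and $r(G_1)+r(G_2)\le r(G)+r(K_k)-(a_2-b_2)$; the latter is derived by making $G$ rigid with $a_1+b_2$ added edges and appealing to gluing. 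You instead get the lower bound $r(E_i)\ge r(K_k)+1$ immediately from the minimum-degree estimate of \cref{lemma:verticalconnbound} (the star at any vertex of $V_i$ is an $\MM$-independent subset of $E_i$), and for the upper bound you observe that the overlap $r(A)+r(B)-r(A\cup B)$ is non-decreasing in $A$ and $B$ by local submodularity of the rank, so you may enlarge both sides to $E(K_{V_i\cup S})$ and pin down the enlarged overlap as exactly $r(K_{|S|})\le r(K_k)$ via the Gluing lemma and linearity of the rank. This eliminates the $a_i,b_i$ bookkeeping and the WLOG ordering step at the modest cost of relying more directly on the minimum-degree bound; both routes produce a vertical $k'$-separation with $k'\le r(K_k)+1$, which is all that is needed.
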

\begin{proof}
    It follows from \cref{lemma:minimumrank} that $r(K_k) \geq k - 1$, and hence from \cref{lemma:verticalconnbound} that $|V| \geq r(K_k) + \dimensionality{} + 2 \geq k + 2$. 
    Suppose, for a contradiction, that there exists a set $S \subseteq V$ of vertices of size at most $k$ such that $G-S$ is disconnected.
    By adding vertices to $S$, we may suppose that $|S| = k$. (Here we use the fact that $|V| \geq k+2$.) 
    
    Let $V_1$ be the union of some, but not all components of $G - S$, and let $V_2 = V - (V_1 \cup S)$. 
    Set $G_i = G[V_i \cup S]$ and $G_i' = G_i + E(K_S)$, for $i \in \{1,2\}$. Finally, let 
    \[ a_i = r(K_{V_i \cup S}) - r(G_i), \hspace{1em} \text{and} \hspace{1em} b_i = r(K_{V_i \cup S}) - r(G'_i),\]for $i \in \{1,2\}$. 
    By symmetry, we may assume that $a_1 - b_1 \leq a_2 - b_2$.
    Note that since $k \geq \threshold{}$, we have, by \cref{lemma:ranklinear},
    \begin{align*}
        r(K_{V_i \cup S}) &= d|V_i| + r(K_k), \hspace{1em} i \in \{1,2\}, \text{ and } \\
        r(K_V) &= \dimensionality{}|V_1 \cup V_2| + r(K_k).
    \end{align*}

    We shall show that
    \begin{equation}\label{eq:first}
        r(G_i) \geq r(K_k) - (a_2 - b_2) + 1, \hspace{1em} i \in \{1,2\},
    \end{equation}
    and 
    \begin{equation}\label{eq:second}
        r(G_1) + r(G_2) \leq r(G) + r(K_k) - (a_2 - b_2).
    \end{equation}
    \cref{lemma:verticalconnsimpler}, \cref{eq:first}, and \cref{eq:second} together imply that $\MM(G)$ has a vertical $c$-separation for some positive integer $c$ with \[c \leq r(K_k) - (a_2 - b_2) + 1 \leq r(K_k) + 1,\] contradicting our assumption that $\MM(G)$ is vertically $(r(K_k) + 2)$-connected.

    To show that \cref{eq:first} holds, note that $r(G_i) = d|V_i| + r(K_k) - a_i$. Hence (using the assumption that $a_1 - b_1 \leq a_2 - b_2$) it suffices to show that
    \[ d|V_i| + r(K_k) - a_i \geq r(K_k) - (a_i - b_i) + 1,\] or equivalently, that $d|V_i| - 1 \geq b_i$, for $i \in \{1,2\}$. Since $S$ is a clique in $G_i'$, and $G_i'$ contains at least one edge not spanned by $S$, we have $r(G_i') \geq r(K_k) + 1$. It follows that 
    \[b_i = r(K_{V_i \cup S}) - r(G_i') \leq d|V_i| + r(K_k) - (r(K_k) + 1) = d|V_i| - 1,\]as required.

    It only remains to show that \cref{eq:second} holds. We have
    \begin{align*}
    r(G_1) + r(G_2) &= d|V_1| + r(K_k) - a_1 + d|V_2| + r(K_k) - a_2 \\ &= d|V_1 \cup V_2| + 2r(K_k) - (a_1 + a_2) \\
    &= r(K_V) + r(K_k) - (a_1+a_2)
    \\ &= r(K_V) - (a_1 + b_2) + r(K_k) - (a_2 - b_2).
    \end{align*}
    Hence we only need to show that
    \[r(K_V) - (a_1 + b_2) \leq r(G),\]or equivalently, that we can make $G$ $\MM$-rigid by adding at most $a_1 + b_2$ suitable edges. 
    
    Let $A_1$ and $B_2$ be edge sets such that $|A_1| = a_1, |B_2| = b_2$ and $G_1 + A_1$ and $G_2' + B_2$ are $\MM$-rigid. 
    We claim that $G + A_1 + B_2$ is $\MM$-rigid. Indeed, since $G+A_1$ is $\MM$-rigid, connecting each pair of vertices in $V_1 \cup S$ by an edge does not increase the rank of $G+A_1+B_2$. Since the graph obtained in this way contains $G_2'+B_2$ as a subgraph, and the latter is $\MM$-rigid, we can further add the edges spanned by  $K_{V_2 \cup S}$ without increasing the rank. The resulting graph is the union of the two complete graphs $K_{V_1 \cup S}$ and $K_{V_2 \cup S}$, which intersect in $k \geq \threshold{}$ vertices. It follows from \cref{lemma:gluing}(a) that this graph is $\MM$-rigid, and hence so is $G + A_1 + B_2$, as required. 
\end{proof}

Given a graph matroid family $\MM$ and a nonnegative integer $k$, we say that a graph $G$ is \emph{$k$-vertex-redundantly $\MM$-rigid} if it is $\MM$-rigid and remains so after the deletion of any set of at most $k$ vertices. Note that the $0$-vertex-redundantly $\MM$-rigid graphs are precisely the $\MM$-rigid graphs.

\begin{proposition}\label{theorem:vertexredundantrigidtovertconn}
    Let $G = (V,E)$ be a graph and let $\MM$ be a nontrivial, unbounded graph matroid family with rank function $r$, dimensionality $\dimensionality{}$, and threshold $\threshold{}$. If $G$ is $k$-connected, $k$-vertex-redundantly $\MM$-rigid, and has at least $k + \threshold{}$ vertices, then $\MM(G)$ is vertically $(k+1)$-connected.
\end{proposition}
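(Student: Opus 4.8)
The plan is to verify the two requirements in the definition of vertical $(k+1)$-connectivity separately: (1) $r(\MM(G)) \geq k+1$, and (2) $\MM(G)$ has no vertical $k'$-separation for any $k' \leq k$. Requirement (1) is immediate from $|V| \geq k+t$: by \cref{lemma:ranklinear} and \cref{lemma:minimumrank}(a),
\[r(\MM(G)) = r(K_V) = d(|V|-t) + r(K_t) \geq dk + r(K_t) \geq k+1,\]
since $d \geq 1$ (as $\MM$ is unbounded) and $r(K_t) \geq 1$ (a single edge is $\MM$-independent by \cref{lemma:unboundedforest}(a)). For requirement (2) we may assume $k \geq 1$, the case $k=0$ being vacuous. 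Unwinding the definitions, (2) is equivalent to the following superadditivity statement: \emph{for every bipartition $(E_1,E_2)$ of $E$ with $r(E_1) < r(E)$ and $r(E_2) < r(E)$ one has $r(E_1)+r(E_2) \geq r(E)+k$}. Indeed, any such bipartition with $r(E_1)+r(E_2) \leq r(E)+k-1$ is a vertical $k'$-separation for $k' = r(E_1)+r(E_2)-r(E)+1 \leq k$ (note $k' \geq 1$ by submodularity), and conversely every vertical $k'$-separation with $k' \leq k$ has this form.

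So fix a bipartition $(E_1,E_2)$ with $r(E_i) < r(E)$ for $i \in \{1,2\}$, write $V_i = V(E_i)$, and set $W = V_1 \cap V_2$, $A = V_1 \setminus V_2$, $B = V_2 \setminus V_1$. Since $G$ has no isolated vertices, $V_1 \cup V_2 = V$, so $V = A \sqcup W \sqcup B$; moreover every $G$-edge incident to $A$ lies in $E_1$ and every $G$-edge incident to $B$ lies in $E_2$, so there is no $G$-edge between $A$ and $B$, and hence $W$ separates $A$ from $B$ in $G$. Consider first the main case $A \neq \emptyset \neq B$: then $W$ is a genuine vertex separator, so $|W| \geq k$ by $k$-connectivity. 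Here the plan is to control each $r(E_i)$ from below: since $G$ is $\MM$-rigid so is $G^+ = G + E(K_W)$, which is the union of its induced subgraphs on $V_1$ and $V_2$ glued along the clique $K_W$, and using \cref{lemma:0extension} and \cref{lemma:gluing} (together with the fact that, by $k$-connectivity, every vertex of $A$ has enough neighbours inside $V_1$) one obtains that $G^+[V_i]$ is $\MM$-rigid; passing from $G^+[V_i]$ and from vertex-deleted subgraphs of $G[E_i]$ — the latter $\MM$-rigid by $k$-vertex-redundant $\MM$-rigidity — back to $E_i$, one bounds $r(E_i)$ in terms of $r(K_{V_i})$ and the edges of $G$ interior to $W$, and feeds this into \cref{lemma:ranklinear} (applied to $K_{V_1},K_{V_2},K_V,K_W$, all on at least $t$ vertices after first enlarging $W$ within a separator if need be) to reach $r(E_1)+r(E_2) \geq r(E)+k$, contradicting $r(E_1)+r(E_2) \leq r(E)+k-1$. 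In the degenerate cases, where $V_1 = V$ or $V_2 = V$ and $W$ carries no separation information, one argues directly from $k$-vertex-redundant $\MM$-rigidity and $|V| \geq k+t$: deleting any $k$ vertices leaves an $\MM$-rigid graph on at least $t$ vertices, so \cref{lemma:gluing} and \cref{lemma:ranklinear} again force $r(E_1)+r(E_2) \geq r(E)+k$. (This proposition is, in a sense, a converse to \cref{theorem:vertconntoconn}, and the two are meant to be used in tandem.)

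The crux — and the main obstacle — is the main case, specifically extracting the full additive term $k$ rather than $k-1$. Plain submodularity gives only $r(E_1)+r(E_2) \geq r(E)$, and even after replacing the $G[E_i]$ by the $\MM$-rigid graphs $G^+[V_i]$ one loses, when passing back from $G^+[V_i]$ to $E_i$, roughly the $\binom{|W|}{2}$ edges of $K_W$ — far more than one can afford. Thus the proof must exploit the strength of $k$-vertex-redundant $\MM$-rigidity genuinely beyond mere $k$-connectivity, while simultaneously keeping careful track of how the edges of $G$ lying inside the separator $W$ are distributed between $E_1$ and $E_2$ (in the graphic case this amounts to the observation that each connected component of $G[E_i]$ must meet $W$ in many vertices). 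A secondary technical point is the possibility of a separator that is small relative to the threshold $t$, handled by padding $W$ and invoking the hypothesis $|V| \geq k+t$.
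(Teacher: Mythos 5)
Your proposal is a sketch that stalls at exactly the point you flag as ``the crux'': you never actually extract the additive $+k$, and the plan you outline would not produce it. The obstacle you identify is real. When you try to lower-bound $r(E_i)$ by passing to the $\MM$-rigid graph $G^+[V_i] = G[V_i] + E(K_W)$ and back, you discard up to $\binom{|W|}{2}$ edges, and $W = V(E_1)\cap V(E_2)$ has no small upper bound (it can be nearly all of $V$). Neither $k$-connectivity (which only gives $|W|\ge k$) nor $k$-vertex-redundant $\MM$-rigidity (which only licenses deletion of up to $k$ vertices) controls this loss. Likewise, your ``degenerate case'' sentence is not an argument; if, say, $V_2 = V$ but $V_1 \subsetneq V$, it is unclear what \cref{lemma:gluing} is gluing, and the requisite inequality is simply asserted. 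So there is a genuine gap, one you yourself anticipate.

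The paper's proof takes a different and decidedly more local route: induction on $k$. Rather than analyzing the whole separator $W$, it picks a \emph{single} vertex $v \in V(E_1)\cap V(E_2)$ (which exists because both $E_i$ are nonempty and $G$ is connected) and restricts the putative $k'$-separation to $G-v$ by setting $E_i' = E_i \setminus \partial_G(v)$. Since $G-v$ is $(k-1)$-connected and $(k-1)$-vertex-redundantly $\MM$-rigid, the inductive hypothesis says $\MM(G-v)$ is vertically $k$-connected and hence has no small vertical separation. The key quantitative inputs are then: $r(G)=r(G-v)+\dimensionality{}$ (both graphs are $\MM$-rigid on at least $\threshold{}$ vertices); $d_G(v)\ge \dimensionality{}+k$ (otherwise deleting at most $k$ vertices would leave an $\MM$-rigid graph with a vertex of degree below $\dimensionality{}$, contradicting \cref{lemma:0extension}); $d_{E_1}(v), d_{E_2}(v)\ge 1$; and the vertex-addition lemma to relate $r(E_i)$ and $r(E_i')$. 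A short case analysis then either produces a vertical $c$-separation of $\MM(G-v)$ with $c\le k-1$ (contradicting the inductive hypothesis) or forces $r(E_1)+r(E_2)\ge r(G-v)+d_G(v)\ge r(G)+k$, contradicting the assumed $k'$-separation. In short, the $+k$ comes not from global bookkeeping over $W$ but from the degree bound $d_G(v)\ge\dimensionality{}+k$ at one peeled-off vertex, accumulated through the induction. If you want to salvage your approach, this is the mechanism to import: replace the separator-wide accounting with a one-vertex-at-a-time peel.
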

\begin{proof}
    We prove by induction on $k$; the $k = 0$ case is trivial. Let us thus assume that $k \geq 1$. Note that since $G$ is $\MM$-rigid, we have \[r(G) \geq r(K_{k + \threshold{}}) \geq k + t - 1 \geq k+1,\]
    where the second inequality follows from \cref{lemma:minimumrank}.

    Let us suppose, for a contradiction, that $\MM(G)$ has a vertical $k'$-separation $(E_1,E_2)$ for some $k' \leq k$. Since $E_1$ and $E_2$ are both nonempty and $G$ is connected, there exists a vertex $v \in V(E_1) \cap V(E_2)$. Note that $G-v$ is $(k-1)$-connected and $(k-1)$-vertex-redundantly $\MM$-rigid, and hence by the induction hypothesis, $\MM(G-v)$ is vertically $k$-connected. Also, $G-v$ and $G$ are both $\MM$-rigid, and since they both have at least $\threshold{}$ vertices, we have $r(G) = r(G-v) + d$. 
    
    Let $E_i' = E_i - \partial_G(v)$ for $i \in \{1,2\}$. From the assumption on $G$, we have \[d_G(v) \geq d + k \geq d + 1,\] for otherwise by deleting a suitable set of at most $k$ vertices, we would obtain an $\MM$-rigid graph on at least $\threshold{}$ vertices in which $v$ has degree less than $\dimensionality{}$, contradicting \cref{lemma:0extension}. We also have $d_{E_1}(v), d_{E_2}(v) \geq 1$. Using \cref{lemma:0extension}, we deduce that
    \begin{align*}
    r(E_1') + r(E_2') &\leq r(E_1) + r(E_2) - (d+1) \\ &\leq r(G) + (k' - 1) - d - 1  \\ &= r(G-v) + (k' - 1) - 1.
    \end{align*}
    It follows that
    \begin{equation}\label{eq:c}
    r(E_1') + r(E_2') = r(G-v) + c - 1
    \end{equation}
    for some integer $c$ with $1 \leq c \leq k'-1 \leq k - 1$.
    If $r(E_1'),r(E_2') \geq c,$ then $(E_1',E_2')$ is a vertical $c$-separation of $\MM(G-v)$, a contradiction. Thus we must have $r(E_1') \leq c - 1$ or $r(E_2') \leq c - 1$; by symmetry, we may suppose that it is the former. 
    
    Now \cref{eq:c} implies that $r(E_1') = c-1$ and $r(E_2') = r(G-v)$. 
    If $d_{E_2}(v) \geq d$, then by \cref{lemma:0extension}, we have $r(E_2) = r(E_2') + d = r(G-v) + d = r(G)$, contradicting the fact that $(E_1,E_2)$ is a vertical $k'$-separation. Hence $d_{E_2}(v) \leq d-1$. It follows from \cref{lemma:0extension,lemma:unboundedforest} that
    \begin{align*}
    r(E_1) + r(E_2) &\geq d_{E_1}(v) + r(E'_2) + d_{E_2}(v) = r(G-v) + d_G(v) \\ &\geq r(G-v) + d + k = r(G) + k \\ &> r(G) + k' - 1.
    \end{align*}
    But this contradicts, again, the fact that $(E_1,E_2)$ is a vertical $k'$-separation.
\end{proof}

Let us note that by combining \cref{theorem:vertconntoconn} and \cref{theorem:vertexredundantrigidtovertconn}, we recover the fact that a graph is $k$-vertex-connected if and only if its graphic matroid is vertically $k$-connected.

Note that if $\MM$ has the Lovász-Yemini property with constant $c$, then every $(c+k)$-connected graph is $k$-vertex-redundantly $\MM$-rigid.
Thus the following is an immediate corollary of \cref{theorem:vertexredundantrigidtovertconn}.
\begin{proposition}\label{theorem:conntovertconn}
    Let $G = (V,E)$ be a graph and let $\MM$ be a nontrivial, unbounded graph matroid family with rank function $r$, dimensionality $\dimensionality{}$, and threshold $\threshold{}$. Suppose that $\MM$ has the Lovász-Yemini property with constant $c$. If $G$ is $(\max(\threshold{},c) + k)$-connected for some positive integer $k$, then $\MM(G)$ is vertically $(k+1)$-connected.
\end{proposition}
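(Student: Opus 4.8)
The plan is to derive this statement as an immediate corollary of \cref{theorem:vertexredundantrigidtovertconn}, so the proof is essentially a matter of verifying that the three hypotheses of that proposition are met.

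First I would record the observation made just before the statement: if $\MM$ has the Lovász-Yemini property with constant $c$ and $G$ is $(c+k)$-connected, then $G$ is $k$-vertex-redundantly $\MM$-rigid. Indeed, deleting an arbitrary set of at most $k$ vertices from a $(c+k)$-connected graph yields a $c$-connected graph, which by the Lovász-Yemini property is $\MM$-rigid; taking the deleted set to be empty shows that $G$ itself is $\MM$-rigid as well. Since $G$ is assumed to be $(\max(\threshold{},c)+k)$-connected, it is in particular $(c+k)$-connected, so this applies.

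Next I would check the two remaining hypotheses. Because $\max(\threshold{},c)+k \geq k$, the graph $G$ is $k$-connected. Moreover, a $(\max(\threshold{},c)+k)$-connected graph has at least $\max(\threshold{},c)+k+1 \geq \threshold{}+k+1 > k+\threshold{}$ vertices. Hence all three hypotheses of \cref{theorem:vertexredundantrigidtovertconn} --- that $G$ is $k$-connected, that $G$ is $k$-vertex-redundantly $\MM$-rigid, and that $G$ has at least $k+\threshold{}$ vertices --- are satisfied, and I would conclude that $\MM(G)$ is vertically $(k+1)$-connected.

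I do not anticipate a genuine obstacle here: the substantive work has already been done in \cref{theorem:vertexredundantrigidtovertconn}. The only point worth flagging is bookkeeping in the connectivity bound: passing from the Lovász-Yemini property to $k$-vertex-redundant $\MM$-rigidity costs $c$ extra units of connectivity on top of $k$, whereas the cardinality requirement of \cref{theorem:vertexredundantrigidtovertconn} only needs $\threshold{}$ extra vertices. This is exactly why the connectivity hypothesis in the statement is $\max(\threshold{},c)+k$ rather than simply $\threshold{}+k$, and why the corollary is phrased in terms of this maximum.
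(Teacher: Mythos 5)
Your proof is correct and follows exactly the route the paper takes: the paper notes just before the proposition that the Lovász-Yemini property with constant $c$ makes every $(c+k)$-connected graph $k$-vertex-redundantly $\MM$-rigid, and then invokes \cref{theorem:vertexredundantrigidtovertconn} as an immediate corollary. Your write-up simply spells out the same verification of hypotheses, including the correct bookkeeping for why the bound is $\max(\threshold{},c)+k$.
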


We are now ready to prove \cref{theorem:mainprecise}. 
We shall use the following folklore statement.

\begin{lemma}\cite[Lemma 5.1]{garamvolgyi.etal_2024}\label{lemma:star}
    Let $G = (V,E)$ and $H = (V',E')$ be graphs without isolated vertices, and let $\psi : E \rightarrow E'$ be a bijection that ``sends stars to stars'', that is, such that for every $v \in V$, there is a vertex $v' \in V'$ with $\psi(\partial_G(v)) = \partial_H(v')$. Then $\psi$ is induced by a graph isomorphism.
\end{lemma}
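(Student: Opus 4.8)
The plan is to build the vertex bijection $\varphi : V \to V'$ directly from the hypothesis: for $v \in V$ we want to set $\varphi(v)$ to be the vertex $v' \in V'$ with $\psi(\partial_G(v)) = \partial_H(v')$, and then check that $\varphi$ is a graph isomorphism satisfying $\psi(uv) = \varphi(u)\varphi(v)$ for every edge $uv$. The one place this breaks down is at components that consist of a single edge (copies of $K_2$), where a vertex of degree one does not determine its target uniquely; these have to be dealt with separately first.

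So I would begin by splitting the edge set. Let $E_0 \subseteq E$ be the set of edges whose component in $G$ is a single edge, and $E_1 = E \setminus E_0$; define $E_0', E_1' \subseteq E'$ analogously in $H$. Using the star condition and simplicity of the graphs one checks that $\psi$ maps each edge of $E_0$ to an edge of $H$ both of whose endpoints have degree one (hence into $E_0'$), while each edge of $E_1$ has an endpoint of degree at least two, so its $\psi$-image is incident to a vertex of $H$ of degree at least two (hence lies in $E_1'$). Since $\psi$ is a bijection and $\{E_0', E_1'\}$ partitions $E'$, it follows that $\psi$ restricts to bijections $E_0 \to E_0'$ and $E_1 \to E_1'$. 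On the single-edge components, any assignment of endpoints consistent with $\psi|_{E_0}$ is a graph isomorphism inducing $\psi|_{E_0}$, so it remains to handle $G[E_1]$ and $H[E_1']$, which have no isolated vertices and no single-edge component.

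In this reduced case the target vertex is unique: if distinct $v', v'' \in V'$ satisfied $\partial_H(v') = \partial_H(v'') = \psi(\partial_G(v))$, then since two vertices of a simple graph lie on at most one common edge this common star would have to be a single edge $f = v'v''$ with $d_H(v') = d_H(v'') = 1$; writing $\partial_G(v) = \{e\}$ with $e = vw$, the vertex $w$ has degree at least two (no single-edge component), so the target of $w$ is incident to $f$ and has degree at least two, a contradiction. Thus $\varphi$ is well defined. For an edge $e = uv$ we have $\psi(e) \in \partial_H(\varphi(u)) \cap \partial_H(\varphi(v))$, and $\varphi(u) \neq \varphi(v)$ (equality would give $\partial_G(u) = \partial_G(v)$, forcing $uv$ to be a single-edge component, which is excluded), so $\psi(uv) = \varphi(u)\varphi(v)$. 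The same observation gives injectivity of $\varphi$, and surjectivity follows because every vertex of $V'$ lies on some edge $\psi(e)$, hence is $\varphi(u)$ or $\varphi(v)$ for $e = uv$; together with $\psi$ being onto this also shows $\varphi$ reflects edges, so it is a graph isomorphism. Assembling the single-edge-component part with the reduced case yields the graph isomorphism inducing $\psi$. The only genuine obstacle is exactly the uniqueness of the target vertex at degree-one vertices — which is why the single-edge components must be removed at the outset — and I expect the remainder to be routine bookkeeping with the star condition.
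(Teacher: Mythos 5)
Your overall strategy --- peeling off the single-edge ($K_2$) components, where a degree-one vertex does not determine its target uniquely, and then defining $\varphi$ directly on what remains --- is the natural one, and your argument in the reduced case is correct. The gap is in the reduction step, specifically in the claim that $\psi$ maps $E_0$ into $E_0'$. For $e = uv \in E_0$ the star condition produces $u',v' \in V'$ with $\partial_H(u') = \partial_H(v') = \{\psi(e)\}$, but you need $u' \neq v'$ to conclude that \emph{both} endpoints of $\psi(e)$ have degree one, and the hypothesis (which is purely existential) does not rule out $u' = v'$; in that case the other endpoint of $\psi(e)$ can have arbitrary degree. You do correctly show $\psi(E_1) \subseteq E_1'$, but $\psi(E_0) \subseteq E_0'$ can fail, so $\psi$ need not respect the partition. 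Concretely, let $G$ consist of two disjoint edges $u_1v_1$ and $u_2v_2$, let $H$ be the path with edges $a_1'b'$ and $a_2'b'$, and set $\psi(u_iv_i) = a_i'b'$. Every vertex of $G$ has degree one and the star condition holds with $v' = a_i'$ in each instance, yet $|V| = 4 \neq 3 = |V'|$, so $\psi$ is not induced by any graph isomorphism.

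This example shows that the lemma, as literally restated here, needs a small extra hypothesis to be true: either that neither $G$ nor $H$ has a $K_2$-component (so that $E_0 = E_0' = \emptyset$ and your reduced case applies directly), or that the star condition is two-sided, i.e.\ for every $v' \in V'$ there is $v \in V$ with $\psi(\partial_G(v)) = \partial_H(v')$. Under the two-sided condition your reduction does go through: if $\psi(e)$ had an endpoint $b'$ with $d_H(b') \geq 2$, the inverse star condition would produce a $w \in V$ with $d_G(w) = d_H(b') \geq 2$ and $e \in \partial_G(w)$, contradicting that both endpoints of $e$ have degree one. In the two places the paper invokes the lemma, $G$ is highly connected and every edge of $H$ is shown to lie in a large vertex star, so neither graph has a $K_2$-component and the deficiency does not arise there; the rest of your argument is then sound.
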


\begin{proof}[Proof of \cref{theorem:mainprecise}]
    Let $d \geq 1$ denote the dimensionality of $\MM$.
    Let us fix a $c'$-connected graph $G$, let $H$ be a graph without isolated vertices, and let $\psi: E(G) \rightarrow E(H)$ be an isomorphism between $\MM(G)$ and $\MM(H)$. Our goal is to show that $\psi$ is induced by a graph isomorphism. 
    
    Fix $v \in V(G)$ and consider $F = E(G) - \partial_G(v)$. Set $F' = \psi(F)$ and $H' = H[F']$; note that $\MM(G-v)$ and $\MM(H')$ are isomorphic matroids. By \cref{theorem:conntovertconn}, $\MM(G)$ and $\MM(G-v)$ are both vertically $\left(r(K_{c-1}) + 2\right)$-connected, and hence \cref{theorem:vertconntoconn} implies that $H$ and $H'$ are both $c$-connected. It follows from the Lovász-Yemini property that $G, G-v, H$, and $H'$ are all $\MM$-rigid.

    Since $\MM$ is unbounded and $F$ is the edge set of an induced subgraph of $G$, it is closed in $\MM(G)$, and hence $F'$ is closed in $\MM(H)$. On the other hand, $F'$ induces the $\MM$-rigid graph $H'$, and thus adding edges of $H$ induced by $V(H')$ to $F'$ cannot increase its rank. We deduce that every such edge must already be contained in $F'$; in other words, that $H'$ is an induced subgraph of $H$.
    Also note that \[r(H) = r(G) = r(G-v)+d = r(H') + d.\]
    Since $H$ and $H'$ are both $\MM$-rigid and have at least $t$ vertices, it follows that $|V(H')| = |V(H)| - 1$.

    Thus $H'$, being an induced subgraph of $H$ on $|V(H)|-1$ vertices, can be written as $H - w$ for some $w \in V(H)$. This shows that $\psi$ maps complements of vertex stars to complements of vertex stars. Since $\psi$ is a bijection, it must also map vertex stars to vertex stars. \cref{lemma:star} now implies that $\psi$ is induced by a graph isomorphism, as desired. 
\end{proof}

\cref{theorem:main} follows immediately from \cref{lemma:whitneytoLY}, \cref{theorem:mainprecise}, and the observation that the trivial graph matroid family has neither the Lovász-Yemini nor the Whitney properties.

As an immediate consequence of \cref{theorem:vertconntoconn}, we can also deduce the following reconstructibility result, in which the requirement is not that $G$ is highly connected, but rather that $\MM(G)$ is highly vertically connected.

\begin{theorem}\label{theorem:verticalconnwhitney}
    Let $\MM$ be an unbounded graph matroid family with rank function $r$, and let $G$ be a graph. Suppose that $\MM$ has the Whitney property with constant $c$. If $\MM(G)$ is vertically $(r(K_{c-1})+2)$-connected, then $G$ is $\MM$-reconstructible.
\end{theorem}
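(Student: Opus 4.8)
The plan is to read the statement off \cref{theorem:vertconntoconn} together with the definition of the Whitney property; essentially all the substance has already been done, and what remains is a single line once the parameters are matched up correctly.

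First I would record two preliminaries. Since $\MM$ has the Whitney property it must be nontrivial (a trivial family records nothing beyond the number of edges, and so cannot have the Whitney property), and as $\MM$ is moreover unbounded it has dimensionality $\dimensionality{} \geq 1$ and a well-defined threshold $\threshold{} = \threshold{\MM}$; in particular \cref{lemma:ranklinear} is available. I would also arrange that the Whitney constant satisfies $c \geq \threshold{} + 1$; this is the only point at which the precise numerical value of $c$ matters, and any valid Whitney constant may be replaced by a larger one, so there is no loss in assuming it.

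The main step is then immediate. Suppose $\MM(G)$ is vertically $(r(K_{c-1}) + 2)$-connected. Since $c - 1 \geq \threshold{}$, \cref{theorem:vertconntoconn} applies with parameter $k = c - 1$ and yields that $G$ is $(k+1) = c$-connected. Because $\MM$ has the Whitney property with constant $c$, it follows at once that $G$ is $\MM$-reconstructible, as required.

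I do not expect any real obstacle: the content lies entirely in \cref{theorem:vertconntoconn}, which converts high vertical connectivity of $\MM(G)$ into high vertex-connectivity of $G$. The only care needed is to ensure that the index $c-1$ appearing in $r(K_{c-1})$ is at least the threshold, so that \cref{theorem:vertconntoconn} may legitimately be invoked — this is precisely why we assumed $c \geq \threshold{} + 1$ above. Note in particular that we never have to inspect a matroid-isomorphic copy $H$ of $G$ or analyse the isomorphism $\psi$ directly, since the Whitney property is used as a black box the moment $G$ is known to be $c$-connected (of course, vertical $k$-connectivity being an invariant of the rank function, any such $H$ inherits the hypothesis automatically).
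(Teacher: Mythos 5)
Your argument is the same as the paper's: the paper introduces the theorem as ``an immediate consequence of \cref{theorem:vertconntoconn},'' and applying that proposition with $k = c-1$ and then invoking the Whitney property is precisely that consequence. One caveat about your preliminary step, however. Replacing the Whitney constant $c$ by a larger $c'$ does preserve the Whitney hypothesis, but it also \emph{strengthens} the hypothesis on $G$: since $r(K_{c'-1}) \geq r(K_{c-1})$, the condition ``vertically $(r(K_{c'-1})+2)$-connected'' is more demanding than the one given. So the passage to $c \geq \threshold{} + 1$ is not literally a ``without loss'' reduction; what you prove is the theorem restricted to constants $c \geq \threshold{}+1$. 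The same restriction is tacit in the paper itself (its \cref{theorem:vertconntoconn} requires $k \geq \threshold{}$), and the theorem is only invoked downstream with $c$ far above $\threshold{}$, so the discrepancy is inconsequential in practice — but it would be cleaner to record $c \geq \threshold{}+1$ as an explicit standing hypothesis rather than to assert that it costs nothing.
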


Let us end this subsection by specializing the previous results to the family $\mathcal{R}_d$ of $d$-dimensional generic rigidity matroids. As noted in the introduction, Villányi recently established the Lovász-Yemini property for this family.

\begin{theorem}\label{theorem:Soma}\cite{villanyi_2023}
    Every $d(d+1)$-connected graph is $\mathcal{R}_d$-rigid.
\end{theorem}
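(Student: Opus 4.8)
The plan is to obtain this as a special case of \cref{theorem:1extendable}, by showing that the family $\mathcal{R}_d$ of generic $d$-dimensional rigidity matroids is $1$-extendable and then extracting the connectivity constant from (the proof of) that theorem. Most of the groundwork is already laid in the discussion around \cref{theorem:nguyen,theorem:abstractrigidity}: $\mathcal{R}_d$ is a nontrivial family of $d$-dimensional abstract rigidity matroids, so by \cref{theorem:nguyen}(a) the addition of a vertex of degree at most $d$ preserves $\mathcal{R}_d$-independence, while \cref{theorem:nguyen}(b) (equivalently, \cref{theorem:abstractrigidity}) gives that $K_{d+2}$ is an $\mathcal{R}_d$-circuit, hence no $\mathcal{R}_d$-circuit has minimum degree at most $d$. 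Together these say that $\dimensionality{\mathcal{R}_d} = d$, which is finite and positive, and that $\threshold{\mathcal{R}_d} = d+1$. It then remains to verify the one nontrivial axiom of $1$-extendability, namely that the $d$-dimensional edge split operation preserves $\mathcal{R}_d$-independence.

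This last statement is the classical ``$1$-extension'' (Henneberg type-$2$) move of combinatorial rigidity theory, and I would prove it by the standard perturbation argument. Let $G$ be $\mathcal{R}_d$-independent, so $R(G,p)$ has full row rank for generic $p$, and fix an edge $uv \in E(G)$; let $G'$ be obtained by deleting $uv$ and inserting a new vertex $z$ adjacent to $u$, $v$, and to $d-1$ further vertices $w_1,\dots,w_{d-1}$. I would first realize $z$ at a point $q$ lying on the line through $p(u)$ and $p(v)$, in general position otherwise. A short computation shows that after this placement the rows of $R(G',(p,q))$ indexed by $uz$ and $vz$, restricted to the columns of the vertices of $G$, become scalar multiples of the row that the deleted edge $uv$ would contribute to $R(G,p)$; since that row is not in the span of the remaining rows of the full-rank matrix $R(G,p)$, a rank count then shows that the rows $uz$, $vz$, $zw_1,\dots,zw_{d-1}$ add exactly $d$ to the rank, matching the $d$ new columns of $z$, so $R(G',(p,q))$ has full row rank at this special realization. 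Because full row rank is a Zariski-open condition, it holds for generic $p'$ as well, and hence $G'$ is $\mathcal{R}_d$-independent. (Alternatively one may simply invoke the Tay--Whiteley theorem.) Combined with the previous paragraph this shows that $\mathcal{R}_d$ is $1$-extendable, so \cref{theorem:1extendable} applies and $\mathcal{R}_d$ has the Lovász-Yemini property.

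The step I expect to be the main obstacle is showing that the constant that comes out this way is exactly $d(d+1)$. The proof of \cref{theorem:1extendable} is written for arbitrary $1$-extendable families, so one has to substitute the true numerical data for $\mathcal{R}_d$ --- namely $\dimensionality{} = d$, $\threshold{} = d+1$, $r(K_n) = dn - \binom{d+1}{2}$, together with the vertex-connectivity thresholds supplied by \cref{theorem:vertconntoconn,theorem:vertexredundantrigidtovertconn} --- and check that no slack is lost, so that the bound collapses to $d(d+1)$ rather than a larger polynomial in $d$. If the general argument turns out to be too lossy for the sharp constant, the fallback is to run Villányi's original inductive proof directly: build highly connected graphs from small ones by repeated $d$-dimensional edge splits and track how vertex-connectivity behaves under this move, which is precisely the route by which the $d(d+1)$ bound (generalizing the Lovász--Yemini bound $d=2$ and the Clinch--Jackson--Tanigawa bound $d=3$) was obtained.
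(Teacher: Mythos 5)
The statement is cited in the paper from Villányi's work; the paper does not give its own proof, so there is nothing internal to compare against. Your reduction to \cref{theorem:1extendable} is sound as far as it goes: $\mathcal{R}_d$ is indeed nontrivial with $\dimensionality{\mathcal{R}_d}=d$ and $\threshold{\mathcal{R}_d}=d+1$ via \cref{theorem:nguyen} and \cref{theorem:abstractrigidity}, and the $d$-dimensional edge split preserving $\mathcal{R}_d$-independence is the classical Tay--Whiteley $1$-extension lemma, which your collinear-placement sketch proves correctly. So \cref{theorem:1extendable} does give the Lovász--Yemini property for $\mathcal{R}_d$.

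The gap is exactly the one you flag, and it is fatal to the proposal as a proof of the stated theorem: the constant emerging from the proof of \cref{theorem:1extendable} is $\delta_\MM$, chosen via \cref{theorem:dominating} so that every graph of minimum degree at least $\delta_\MM$ has a $d$-dominating set of size at most $n/(6d)$. Even with the best available bounds for \cref{theorem:dominating}, this threshold grows like $d^2\log d$ (or worse) and certainly does not collapse to $d(d+1)$ by ``substituting the true numerical data for $\mathcal{R}_d$.'' The dominating-set machinery is inherently lossy; no bookkeeping of $\dimensionality{}$, $\threshold{}$, and $r(K_n)$ will recover the sharp bound. Your ``fallback'' is to run Villányi's original argument, but that is not a step in your proof --- it is the whole theorem. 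As written, the proposal proves the weaker qualitative statement (``some constant $c$ works'') but not the quantitative statement with $c=d(d+1)$, which is the actual content of \cref{theorem:Soma}. The paper itself is explicit on this point: it notes that the proof of \cref{theorem:1extendable} deliberately ``is not concerned with optimizing the connectivity constant,'' which is precisely why \cref{theorem:Soma} is cited rather than rederived from \cref{theorem:1extendable}.
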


By combining \cref{theorem:mainprecise,theorem:verticalconnwhitney,theorem:Soma}, we can deduce the following Whitney-type result for $\mathcal{R}_d(G)$. This answers positively a question of Brigitte and Herman Servatius~\cite[Problem 17]{alexandrov.etal_2010}.

\begin{theorem}\label{theorem:rigiditywhitney}
    Let $G$ be a graph. If $G$ is $d(d+1)^2$-connected, or $\mathcal{R}_d(G)$ is vertically $(d(d+1))^2$-connected, then $G$ is $\mathcal{R}_d$-reconstructible. 
\end{theorem}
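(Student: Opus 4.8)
The plan is to assemble three earlier results — Villányi's theorem (\cref{theorem:Soma}), the equivalence \cref{theorem:main}, and its vertical-connectivity refinement \cref{theorem:verticalconnwhitney} — the only real work being to track the explicit constants. First I would record the parameters of $\mathcal{R}_d$. As discussed in \cref{subsection:operations}, $\mathcal{R}_d$ is a family of abstract $d$-dimensional rigidity matroids; in particular it is nontrivial, has dimensionality $d \geq 1$ (hence is unbounded), and by \cref{theorem:abstractrigidity} has threshold $\threshold{\mathcal{R}_d} = d+1$ and rank function $r(K_n) = dn - \binom{d+1}{2}$ for all $n \geq d$. By \cref{theorem:Soma}, $\mathcal{R}_d$ has the Lovász-Yemini property with constant $c_0 = d(d+1)$, and $c_0 \geq \threshold{\mathcal{R}_d}$.

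For the first assertion I would extract an explicit Whitney constant from the proof of \cref{theorem:main}: that proof shows that an unbounded nontrivial graph matroid family with threshold $t$ having the Lovász-Yemini property with a constant $c \geq t$ in fact has the Whitney property with constant $c + r(K_{c-1}) + 2$. Applied with $c = c_0$, this gives the Whitney property with constant
\[
c_1 = d(d+1) + \Big(d\big(d(d+1)-1\big) - \binom{d+1}{2}\Big) + 2,
\]
and a direct computation yields $2\big(d(d+1)^2 - c_1\big) = (d+4)(d-1) \geq 0$, so $c_1 \leq d(d+1)^2$. Hence every $d(d+1)^2$-connected graph is $\mathcal{R}_d$-reconstructible.

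For the second assertion I would feed the Whitney constant $d(d+1)^2$ into \cref{theorem:verticalconnwhitney}: it then suffices that $\mathcal{R}_d(G)$ be vertically $\big(r(K_{d(d+1)^2-1}) + 2\big)$-connected. Using $r(K_n) = dn - \binom{d+1}{2}$ again, one checks
\[
(d(d+1))^2 - \Big(r\big(K_{d(d+1)^2-1}\big) + 2\Big) = \frac{(d+4)(d-1)}{2} \geq 0,
\]
so vertical $(d(d+1))^2$-connectivity of $\mathcal{R}_d(G)$ already implies $\mathcal{R}_d$-reconstructibility of $G$, completing the proof.

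There is no substantive obstacle beyond bookkeeping: the mathematical content lies entirely in the three cited theorems together with the fact that $\mathcal{R}_d$ is an unbounded abstract $d$-dimensional rigidity matroid family. The one point needing care is that \cref{theorem:main} as stated does not name a constant, so one must invoke its proof — and the closed form $r(K_n) = dn - \binom{d+1}{2}$ — to see that the two connectivity thresholds come out to be exactly $d(d+1)^2$ and $(d(d+1))^2$ (the displayed inequalities being tight precisely when $d=1$).
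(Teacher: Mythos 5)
Your proposal is correct and takes essentially the same approach as the paper, which merely states that the result follows ``by combining'' \cref{theorem:main,theorem:verticalconnwhitney,theorem:Soma} without doing the constant-tracking; your explicit verification that the Whitney constant $c + r(K_{c-1}) + 2$ with $c = d(d+1)$ is at most $d(d+1)^2$, and that $r\bigl(K_{d(d+1)^2-1}\bigr)+2 \leq (d(d+1))^2$, correctly fills in the arithmetic the paper leaves implicit.
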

\begin{proof}
    \cref{theorem:mainprecise,theorem:Soma} imply that $\mathcal{R}_d$ has the Whitney property with constant 
    \begin{align*}
        c' &= d(d+1) + d\big(d(d+1) - 1\big) - \binom{d+1}{2} + 2 \\ &= d(d+1)(d+1/2) - d + 2 \\ &\leq d(d+1)^2.
    \end{align*}
    This means that every $d(d+1)^2$-connected graph is $\mathcal{R}_d$-reconstructible. The second part of the statement follows from \cref{theorem:verticalconnwhitney} by noting that
    \begin{align*}
        d(c'-1) - \binom{d+1}{2} + 2 \leq d\big(d(d+1)^2-1\big) - \binom{d+1}{2} + 2 \leq \big(d(d+1)\big)^2.
    \end{align*}
\end{proof}

\cref{theorem:rigiditywhitney} shows that $\mathcal{R}_d$ has the Whitney property, for all $d$. Previously this was only known for $d=1$, given by Whitney's theorem, and for $d=2$, where Jordán and Kaszanitzky~\cite{jordan.kaszanitzky_2013} showed that the constant $7$ suffices.
It is apparent from the proof of \cref{theorem:rigiditywhitney} that the constant $d(d+1)^2$ is not tight, and we believe it to be far from optimal. In fact, for $d \geq 2$ it is open whether every $d(d+1)$-connected graph is $\mathcal{R}_d$-reconstructible. 

\subsection{The bounded case}\label{subsection:boundedcase}

Next, we consider rigidity and reconstructibility in bounded graph matroid families.
We start with a simple observation.

\begin{lemma}\label{lemma:boundedLY}
    Let $\MM$ be a graph matroid family. If $\MM$ is bounded, then it has the Lovász-Yemini property.
\end{lemma}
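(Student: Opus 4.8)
The plan is to reduce to the essential case, isolate one structural fact about circuits, and then run a short fundamental‑circuit argument. First note that a trivial graph matroid family is unbounded, since then $r(K_n)=\binom{n}{2}$; hence $\MM$ is nontrivial and, by the remarks following \cref{lemma:ranklinear}, has dimensionality $0$. Write $\rho:=r(\MM)$. If $\rho=0$ then $r(G)=0=r(K_{V(G)})$ for every graph $G$, so every graph is $\MM$‑rigid; thus assume $\rho\geq 1$. Since $\{r(K_n)\}$ is monotone and bounded, there is a least integer $N_0$ with $r(K_{N_0})=\rho$, and then $r(K_n)=\rho$ for all $n\geq N_0$. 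I will show that $\MM$ has the Lovász–Yemini property with constant $c:=\max(N_0-1,\,2\rho-1)$.

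The key step, which I would prove first, is: \emph{no $\MM$‑circuit with at most $\rho$ edges has a connected component consisting of a single edge.} Suppose such a circuit $C$ had a single‑edge component $e$, and let $C_0=C-e$ (deleting the two now‑isolated endpoints of $e$), so $E(C)=E(C_0)\cup\{e\}$, $V(C_0)\cap V(e)=\emptyset$, and $|E(C_0)|\leq\rho-1$. Here $C_0\neq\emptyset$: otherwise $e$ is a loop of $\MM$, forcing $r(K_2)=0$ and $\rho=0$. Since $C$ is a circuit, $C_0$ is $\MM$‑independent and $r(C)=|E(C_0)|$, so $e\in\cl(C_0)$ in $\MM(C)$. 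For $q\geq 1$ let $M_q$ be the graph consisting of $q$ independent edges on fresh vertices and put $H_q:=C_0\sqcup M_q$, so $H_1\cong C$. By compatibility $e$ lies in the closure of $C_0$ in $\MM(H_q)$ as well, and since $H_q$ has automorphisms fixing $C_0$ and permuting the $q$ edges of $M_q$, isomorphism‑invariance gives that \emph{every} edge of $M_q$ lies in that closure; hence $r(H_q)=r(C_0)=|E(C_0)|$ and therefore $r(M_q)\leq r(H_q)=|E(C_0)|\leq\rho-1$ for all $q$. But $M_{\rho+1}$ is an $\MM$‑circuit by \cref{lemma:unboundedforest}(b), so $M_\rho$ is $\MM$‑independent and $r(M_\rho)=\rho$; taking $q=\rho$ yields $\rho\leq\rho-1$, a contradiction.

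With the key step established, the main argument is immediate. Let $G$ be a $c$‑connected graph. Then $|V(G)|\geq c+1\geq N_0$, so $r(K_{V(G)})=\rho$, and it suffices to show $r(G)=\rho$. Suppose not, and choose a maximal $\MM$‑independent edge set $F\subseteq E(G)$, i.e.\ a basis of $\MM(G)$; then $|F|=r(G)\leq\rho-1$, so the set $W$ of endpoints of edges of $F$ satisfies $|W|\leq 2|F|\leq 2\rho-2<c$. As $|W|<|V(G)|$ there is a vertex $u\notin W$, and $u$ has at least $c>|W|$ neighbours, hence a neighbour $u'\notin W$; let $e=uu'\in E(G)$. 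Since $F$ is a basis, $F\cup\{e\}$ contains an $\MM$‑circuit $C$ with $e\in E(C)$ and $E(C)\subseteq F\cup\{e\}$, so $|E(C)|\leq|F|+1\leq\rho$. Because $u,u'\notin V(F)$, the only edge of $F\cup\{e\}$ incident to $u$ (respectively $u'$) is $e$, so $u$ and $u'$ both have degree $1$ in $C$; thus $e$ is a single‑edge component of $C$, contradicting the key step. Hence $r(G)=\rho=r(K_{V(G)})$, and $G$ is $\MM$‑rigid.

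The main obstacle is the key step: it is the one place where boundedness is used in an essential (rather than bookkeeping) way, propagating a single coincidence $e\in\cl(C_0)$ — via compatibility, isomorphism‑invariance, and \cref{lemma:unboundedforest}(b) — into arbitrarily many independent edges all collapsing into the closure of a fixed set of at most $\rho-1$ edges. One could instead split into cases according to whether $\MM$ has a circuit of minimum degree one with at most $\rho$ edges and invoke \cref{lemma:smallcircuits}, but the self‑contained key step above seems cleaner and produces an explicit constant.
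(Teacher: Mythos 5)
Your proof is correct, but it takes a substantially longer route than the paper's. The paper's argument is just: a $(2r(\MM)-1)$-connected graph contains a matching of size $r(\MM)$ (a maximal matching covering fewer than $2r(\MM)$ vertices misses some vertex, whose degree $\geq 2r(\MM)-1$ then forces an augmenting edge), and by \cref{lemma:unboundedforest}(b) such a matching is an independent set of size $r(\MM)$, so $r(G) = r(\MM) = r(K_{V(G)})$. Your ``key step'' --- no small $\MM$-circuit has a single-edge component --- is a correct and rather pleasant structural fact, and the closure-propagation argument you give for it (using compatibility, isomorphism-invariance, and \cref{lemma:unboundedforest}(b)) is sound. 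But it is not needed for this lemma: in your final step, once you have $u,u'\notin V(F)$ you could equally well have built a maximal matching rather than a basis $F$ and invoked \cref{lemma:unboundedforest}(b) directly, which is exactly what the paper does. Your connectivity constant $c=\max(N_0-1,\,2r(\MM)-1)$ also carries an unnecessary term: $r(K_{2r(\MM)})=r(\MM)$ already (by the matching argument), so $N_0\leq 2r(\MM)$ and the max is always $2r(\MM)-1$. In spirit your key step is closer to the circuit-manipulation arguments of \cref{lemma:smallcircuits}, which the paper develops separately for the bounded Whitney characterization rather than for this lemma.
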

\begin{proof}
    Let $r$ denote the rank function of $\MM$, and let $k = 2(r(\MM) - 1) + 1$. It is not difficult to see that every $k$-connected graph contains a matching of size at least $r(\MM)$. It follows from \cref{lemma:unboundedforest}(b) that such a matching has rank $r(\MM)$. This implies that every $k$-connected graph has rank $r(\MM)$, and hence is $\MM$-rigid.
\end{proof}

On the other hand, not every bounded graph matroid family has the Whitney property. For example, the family $\mathcal{U}_k$ of rank-$k$ uniform matroids does not: in this case $\mathcal{U}_k(G)$ is determined by the number of edges of $G$, rather than $G$ itself. %

\cref{theorem:boundedwhitney} below provides a complete characterization of bounded graph matroid families with the Whitney property. It turns out that the example of $\mathcal{U}_k$ is characteristic, in the sense that if a bounded graph matroid family $\MM$ does not have the Whitney property, then there exist arbitrarily highly connected graphs $G$ for which $\MM(G)$ is a uniform matroid.

One of the main tools in the proof of \cref{theorem:boundedwhitney} is the following result from extremal graph theory.
For a finite family of graphs $\mathcal{C}$, let $ex(\mathcal{C},n)$ denote the maximum number of edges in a graph on $n$ vertices that does not contain a copy of any member of $\mathcal{C}$ as a subgraph. 

\begin{theorem}(See, e.g.,~\cite[Theorem 2.33]{Füredi2013})\label{theorem:turan}
    Let $\mathcal{C}$ be a finite family of graphs. If $\mathcal{C}$ does not contain a forest, then there exists a positive constant $c$ such that $ex(\mathcal{C},n) \geq n^{1+c}$.
\end{theorem}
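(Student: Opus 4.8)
The plan is to deduce this from the classical Erdős girth/deletion argument, after first noting that forbidding long girth already forbids every member of $\mathcal{C}$. Set $L := \max_{C \in \mathcal{C}}|V(C)|$, which is finite since $\mathcal{C}$ is finite, and note $L \ge 3$. Since no member of $\mathcal{C}$ is a forest, each $C \in \mathcal{C}$ contains a cycle, and the shortest cycle of $C$ has length at most $|V(C)| \le L$. Consequently, any graph of girth greater than $L$ (i.e.\ with no cycle of length in $\{3,\dots,L\}$) contains no copy of any $C \in \mathcal{C}$, so $ex(\mathcal{C},n) \ge ex(\{C_3,\dots,C_L\},n)$. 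It therefore suffices to construct, for all sufficiently large $n$, a graph on $n$ vertices with girth $>L$ and at least $n^{1+c}$ edges, for a fixed $c = c(L) > 0$.

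For this I would run the alteration method on the random graph $G(n,p)$ with $p = \varepsilon\, n^{-(L-2)/(L-1)}$, where $\varepsilon = \varepsilon(L) \in (0,1]$ is a small constant to be fixed. Let $X$ count the edges and $Y$ count the cycles of length at most $L$. Then $\mathbb{E}[X] = \binom{n}{2}p \ge \tfrac{\varepsilon}{3}\, n^{1 + 1/(L-1)}$ for $n$ large, using $2 - \tfrac{L-2}{L-1} = 1 + \tfrac{1}{L-1}$. For each $j$ with $3 \le j \le L$, since $np = \varepsilon\, n^{1/(L-1)}$, the expected number of $j$-cycles is at most $\tfrac{1}{2j}(np)^j \le \varepsilon^j n^{j/(L-1)} \le \varepsilon^3 n^{1+1/(L-1)}$, hence $\mathbb{E}[Y] \le L\varepsilon^3 n^{1+1/(L-1)}$. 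Choosing $\varepsilon$ with $L\varepsilon^2 \le \tfrac16$ gives $\mathbb{E}[X - Y] \ge \tfrac{\varepsilon}{6}\, n^{1+1/(L-1)}$ for $n$ large, so some outcome $G$ has $X - Y$ at least this value. Deleting one edge from each cycle of length $\le L$ in $G$ destroys at most $Y$ edges and leaves a graph $G'$ on $n$ vertices with girth $>L$ and $|E(G')| \ge X - Y \ge \tfrac{\varepsilon}{6}\, n^{1+1/(L-1)}$. For large $n$ this exceeds $n^{1 + 1/(2(L-1))}$, so with $c := \tfrac{1}{2(L-1)}$ we obtain $ex(\mathcal{C},n) \ge |E(G')| \ge n^{1+c}$ for all sufficiently large $n$ (and, if one insists on all $n$, shrink $c$ slightly and absorb the finitely many small cases).

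The only point that genuinely needs care — and the reason for carrying the extra constant $\varepsilon$ rather than taking $p = n^{-(L-2)/(L-1)}$ outright — is that at that value of $p$ the expected edge count and the expected number of $L$-cycles are both of order $\Theta(n^{1+1/(L-1)})$, so a naive first-moment comparison fails; the slack $\varepsilon^j \le \varepsilon^3$ for $j \ge 3$, obtained by shrinking $\varepsilon$, is exactly what makes $\mathbb{E}[Y]$ a lower-order term relative to $\mathbb{E}[X]$. Everything else — the reduction to short-cycle-free graphs, the moment estimates, and the deletion step — is routine, and concentration is not even needed since a single good outcome (via $\mathbb{E}[X-Y]$) suffices.
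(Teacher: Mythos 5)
Your proof is correct. The paper does not supply a proof of this theorem but cites the Füredi--Simonovits survey for it; the argument you give -- reduce to excluding short cycles via the girth of the non-forest members, then apply the Erdős alteration method to $G(n,p)$ at $p=\varepsilon n^{-(L-2)/(L-1)}$, with the small constant $\varepsilon$ making $\mathbb{E}[Y]$ a negligible fraction of $\mathbb{E}[X]$ before the deletion step -- is the standard proof of this bound, and your bookkeeping of exponents and the choice $c=\tfrac{1}{2(L-1)}$ are all in order. (Only a cosmetic caveat: as literally stated the inequality cannot hold at $n=1,2$, so the theorem is really a statement for $n$ sufficiently large, which is all the paper ever uses.)
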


\cref{theorem:turan} lets us construct $\mathcal{C}$-free graphs with arbitrarily high average degree. The following classical theorem of Mader allows us to replace ``high average degree'' with ``high vertex-connectivity''.

\begin{theorem}\label{theorem:mader}\cite{mader_1972} (see also~\cite[Theorem 1.4.3]{diestel_2017})
    Let $G$ be a graph, let $\delta = 2|E(G)|/|V(G)|$ denote the average degree of $G$, and let $k$ be a positive integer. If $\delta \geq 4k$, then $G$ contains a $k$-connected subgraph.
\end{theorem}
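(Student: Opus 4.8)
The plan is to reduce Mader's theorem to a sharper, self-strengthening statement amenable to induction on the number of vertices. Concretely, for $k \geq 2$ I would prove: every graph $H$ with $|V(H)| \geq 2k-1$ and $|E(H)| > (2k-3)(|V(H)| - k + 1)$ contains a $k$-connected subgraph. Granting this, the stated theorem follows quickly. If $k = 1$, then $\delta \geq 4$ forces $G$ to contain an edge, and $K_2$ is a $1$-connected subgraph. If $k \geq 2$, then $\delta = |E(G)|/|V(G)| \geq 4k$ gives $|E(G)| \geq 4k|V(G)| > (2k-3)(|V(G)| - k + 1)$, while the crude bound $|E(G)| \leq \binom{|V(G)|}{2}$ combined with $|E(G)| \geq 4k|V(G)|$ forces $|V(G)| \geq 8k+1 \geq 2k-1$; hence the sharper statement applies to $G$ itself.

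I would organize the induction on $n := |V(H)|$ as follows. For the base case $n = 2k-1$, the hypothesis reads $|E(H)| > (2k-3)k$, and since $(2k-3)k + 1 = \binom{2k-1}{2}$ this forces $H = K_{2k-1}$, which is $(2k-2)$-connected and therefore $k$-connected. For the inductive step with $n > 2k-1$ I would split into two cases. First, if $H$ has a vertex $v$ of degree at most $2k-3$, then $H - v$ still has at least $2k-1$ vertices and more than $(2k-3)(n-k+1) - (2k-3) = (2k-3)((n-1)-k+1)$ edges, so by induction $H - v$, hence $H$, contains a $k$-connected subgraph. Second, if every vertex of $H$ has degree at least $2k-2$ and $H$ is not itself $k$-connected, I would fix a vertex cut $S$ with $|S| \leq k-1$ and write $H = H_1 \cup H_2$ with $V(H_1) \cap V(H_2) = S$, no edges between $V(H_1)\setminus S$ and $V(H_2)\setminus S$, and both sides properly containing $S$. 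A vertex of $V(H_i)\setminus S$ has all of its (at least $2k-2$) neighbours inside $V(H_i)$, so each $H_i$ has at least $2k-1$ vertices, and each has fewer than $n$ vertices. Writing $n_i = |V(H_i)|$ and using $n_1 + n_2 = n + |S| \leq n + k - 1$, if both $|E(H_i)| \leq (2k-3)(n_i - k+1)$ held then
\[
|E(H)| \leq |E(H_1)| + |E(H_2)| \leq (2k-3)(n_1 + n_2 - 2k + 2) \leq (2k-3)(n - k + 1),
\]
contradicting the hypothesis; so at least one $H_i$ inherits the edge bound and the induction hypothesis applies to it.

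I expect the main obstacle to be the second case of the inductive step. There one must check simultaneously that both sides of the separation are large enough to feed back into the induction — this is exactly where the degree-$\geq 2k-2$ assumption is used, via the observation that a vertex outside the cut drags all its neighbours into its own side — and that the edge-count hypothesis is inherited by at least one side, which is the displayed counting argument (valid because $2k-3 \geq 0$). Everything else, namely the base case, the low-degree reduction, and the passage from the sharper statement back to the stated form, is routine bookkeeping.
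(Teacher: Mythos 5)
Your proof is correct and is essentially the standard argument found in Diestel's \emph{Graph Theory} (Theorem 1.4.3), which is exactly the reference the paper points to; the paper itself does not supply a proof, only the citation. The self-strengthening induction hypothesis $|E(H)| > (2k-3)(|V(H)|-k+1)$, the two-case analysis (delete a low-degree vertex vs.\ count across a small separator), and the observation that minimum degree $\geq 2k-2$ forces both sides of the separation to have $\geq 2k-1$ vertices all match that source.
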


We shall also need the following simple combinatorial lemma.

\begin{lemma}\label{lemma:mindegone}
    Let $G$ be a graph and let $m \geq 2$ be an integer. If $|E(G)| \geq 2m - 2$, then $G$ contains a subgraph on $m$ edges and with minimum degree one.
\end{lemma}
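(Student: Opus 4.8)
The plan is to exhibit the $m$ edges directly, by a case distinction on the minimum degree of $G$; we may assume $G$ has no isolated vertices, since a subgraph ``on $m$ edges with minimum degree one'' has none anyway. The guiding observation is that such a subgraph is easy to build in one of two ways: either as $m$ edges that happen to form a forest — any forest with an edge has a vertex of degree one — or as a single pendant edge $uv$ together with $m-1$ further edges chosen to avoid its leaf endpoint $v$.

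If the minimum degree of $G$ is at least $m+1$, I would note that every connected component $C$ of $G$ also has minimum degree at least $m+1$ (all edges at a vertex of $C$ lie in $C$), hence at least $m+2$ vertices, so a spanning tree of $C$ has at least $m+1$ edges. Thus a spanning forest of $G$ has at least $m+1 \ge m$ edges, and choosing any $m$ of them yields a forest with $m \ge 1$ edges which — viewed without isolated vertices — has minimum degree one, as required.

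In the complementary case, $G$ has a vertex $v$ with $d_G(v) \le m$; I would fix such a $v$ together with a neighbour $u$ and form $G'$ from $G$ by deleting the $d_G(v)-1$ edges incident to $v$ other than $uv$. Then $v$ is a leaf of $G'$, and $|E(G')| = |E(G)| - (d_G(v)-1) \ge (2m-1) - (m-1) = m$, so $G'-v$ has at least $m-1$ edges; taking $uv$ together with any $m-1$ edges of $G'-v$ produces a subgraph with exactly $m$ edges in which $v$ has degree one, completing the argument.

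Both cases are immediate, so there is no real obstacle here; the only points requiring any care are the edge count in the second case — which is where the hypothesis $|E(G)| \ge 2m-1$ is used, in combination with $d_G(v) \le m$ — and the remark in the first case that minimum degree at least $m+1$ forces each component to contribute at least $m+1$ edges to a spanning forest. One could instead avoid passing to components and compare $|E(G)|$ with the number of edges of a spanning forest of $G$ directly, but the component-wise estimate is slightly cleaner.
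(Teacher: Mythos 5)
Your proof is correct. The overall shape matches the paper's: a case split on degrees, with the hypothesis $|E(G)| \geq 2m-1$ invoked only in the ``low-degree vertex'' case, where a pendant edge plus $m-1$ further edges is exhibited. The difference is in the first case: the paper simply observes that a vertex of degree at least $m$ yields the star $K_{1,m}$ on the nose, whereas you take the threshold at minimum degree $m+1$ and pass through spanning forests, using that any $m$ edges of a forest (without isolated vertices) form a graph with a leaf. Both work; the paper's star is a little more direct and makes the two cases symmetric in how they produce a degree-one vertex (as a leaf of an explicit pendant structure), while your forest argument is a touch more roundabout. Your second case is also slightly different in detail: the paper deletes the vertex $v$ entirely and uses $|E(G - v)| \geq m$ (with $d_G(v) \leq m-1$), while you delete only the extra edges at $v$ and allow $d_G(v) \leq m$; the arithmetic lands in the same place either way.
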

\begin{proof}
    If $G$ has a vertex of degree at least $m$, then it contains the star $K_{1,m}$ as a subgraph. On the other hand, if $G$ has a vertex $v$ of degree at least $1$ and at most $m-1$, then $G-v$ has at least $m - 1$ edges. Hence we may take an arbitrary subgraph of $G-v$ on $m-1$ edges, plus an edge incident to $v$.
\end{proof}

Recall that given a bounded graph matroid family $\MM$ with rank $r(\MM)$, we call an $\MM$-circuit \emph{small} if it has at most $r(\MM)$ edges.

\begin{theorem}\label{theorem:boundedwhitney}
    Let $\MM$ be a bounded graph matroid family. Then $\MM$ has the Whitney property if and only if there is a small $\MM$-circuit with minimum degree one.
\end{theorem}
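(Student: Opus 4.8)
The plan is to prove the two implications separately. The nontrivial ``if'' direction rests on a purely matroidal characterization of the vertex stars of a highly connected graph, and the ``only if'' direction on a Turán-type construction of highly connected graphs whose $\MM$-matroid is uniform. For the ``if'' direction, suppose there is a small $\MM$-circuit with minimum degree one. By \cref{lemma:smallcircuits} there is an integer $m \geq 2$ such that $K_{1,m}$ is the unique (up to isomorphism) small $\MM$-circuit with at most $m$ edges and minimum degree one; in particular $m \leq r(\MM)$, and (from the minimality of $m$) there is no small $\MM$-circuit with minimum degree one and fewer than $m$ edges. Set $N_0 = \max(2m-1,4)$. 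The heart of the argument is the following claim: \emph{for every graph $F$, the maximal edge sets $S \subseteq E(F)$ with $|S| \geq N_0$ such that $\MM(F[S])$ is a uniform matroid of rank $m-1$ are exactly the vertex stars $\partial_F(v)$ with $d_F(v) \geq N_0$.}

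To prove the claim, first observe that if $\MM(F[S])$ is a uniform matroid of rank $m-1$ then every $m$-element subset of $S$ spans an $\MM$-circuit (such a subset is $\MM$-dependent, and any proper dependent subset would contradict uniformity), and these circuits are small since $m \leq r(\MM)$. Applying \cref{lemma:mindegone} to $F[S]$ (which has $|S| \geq 2m-1$ edges and no isolated vertices) yields a subgraph on $m$ edges with minimum degree one; as it spans an $\MM$-circuit it must be a copy of $K_{1,m}$, so $F[S]$ has a vertex $v$ with $d_{F[S]}(v) \geq m$. Now for any $f \in S$, taking $m-1$ of the edges at $v$ together with $f$ gives an $\MM$-circuit on $m$ edges with minimum degree one, hence another copy of $K_{1,m}$; when $m \geq 3$ this forces $f$ to be incident to $v$, while when $m = 2$ one instead argues that every two edges of $S$ are adjacent and $|S| \geq 4$, so $F[S]$ is a star rather than a triangle. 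Either way $S \subseteq \partial_F(v)$, and maximality gives $S = \partial_F(v)$. The reverse inclusion is routine: the $(m-1)$-subsets of $\partial_F(v)$ are copies of $K_{1,m-1}$, which are $\MM$-independent by the minimality of $m$, and the $m$-subsets are copies of the $\MM$-circuit $K_{1,m}$, so $\MM(F[\partial_F(v)])$ is uniform of rank $m-1$; maximality then follows from the inclusion just proved.

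Granting the claim, the ``if'' direction follows. If $G$ is $N_0$-connected then every vertex has degree at least $N_0$, so the maximal sets in the claim are precisely the vertex stars of $G$. Given a graph $H$ without isolated vertices and an isomorphism $\psi\colon \MM(G)\to\MM(H)$, the defining property of these maximal sets is invariant under matroid isomorphism, so $\psi$ maps $\{\partial_G(v):v\in V(G)\}$ onto the corresponding family in $\MM(H)$, which by the claim applied to $H$ consists of vertex stars of $H$. Thus $\psi$ sends stars to stars, and \cref{lemma:star} shows $\psi$ is induced by a graph isomorphism; hence $\MM$ has the Whitney property with constant $N_0$. For the ``only if'' direction I argue contrapositively: suppose no small $\MM$-circuit has minimum degree one. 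Let $\mathcal{C}$ be the finite set of (isomorphism classes of) small $\MM$-circuits; each has minimum degree at least two, so $\mathcal{C}$ contains no forest. By \cref{theorem:turan} there are $\mathcal{C}$-free graphs of arbitrarily large average degree, and by \cref{theorem:mader} each contains, for any given $k$, a $k$-connected subgraph $G_k$, which is again $\mathcal{C}$-free; for $k$ large we may also assume $|E(G_k)| > r(\MM)$. Since $G_k$ contains no $\MM$-circuit with at most $r(\MM)$ edges, every subset of $E(G_k)$ of size at most $r(\MM)$ is $\MM$-independent and every larger one is $\MM$-dependent, so $\MM(G_k)$ is the uniform matroid of rank $r(\MM)$ on $|E(G_k)|$ elements; the same holds for the forest $K_{1,|E(G_k)|}$ (also $\mathcal{C}$-free, of rank $r(\MM)$ by \cref{lemma:unboundedforest}). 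Hence $\MM(G_k) \cong \MM(K_{1,|E(G_k)|})$, while $G_k$, being $k$-connected with $k \geq 2$, is not isomorphic to the star; so $G_k$ is not $\MM$-reconstructible, and as $k$ was arbitrary $\MM$ does not have the Whitney property.

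I expect the main obstacle to be the matroidal characterization of vertex stars in the ``if'' direction: identifying the correct invariant (being a large maximal flat on which $\MM$ restricts to a uniform matroid of rank $m-1$), verifying it is immune to the degenerate configurations that appear for small $m$ (notably triangles when $m=2$), and — crucially — checking that the characterization is stated so that it applies verbatim to the \emph{a priori} unknown graph $H$. This last point is what lets the bounded case avoid any connectivity-transfer machinery of the kind used in the unbounded case.
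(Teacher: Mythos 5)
Your proof is correct and takes essentially the same approach as the paper's. Both directions hinge on the same ingredients: the Turán-plus-Mader construction for the ``only if'' direction, and \cref{lemma:smallcircuits}, \cref{lemma:mindegone}, and \cref{lemma:star} for the ``if'' direction. The cosmetic differences are that (i) in the ``only if'' direction you exhibit the star $K_{1,|E(G_k)|}$ as an explicit counterexample graph $H$, whereas the paper notes that $\MM(G_k)$ admits non-graph-induced self-isomorphisms; and (ii) in the ``if'' direction you package the argument as a standalone matroid-invariant characterization of vertex stars (maximal large flats on which $\MM$ restricts to a uniform matroid of rank $m-1$), which is then applied symmetrically to $G$ and $H$, whereas the paper reasons directly about $H' = H[\psi(\partial_G(v))]$; the underlying computation is identical.

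One step is stated too loosely: in establishing the claim, ``taking $m-1$ of the edges at $v$ together with $f$ gives an $\MM$-circuit on $m$ edges with minimum degree one'' is not true for an arbitrary choice of those $m-1$ edges. If $f=xy$ with both $x,y$ among the chosen neighbours of $v$ and $m=3$, the resulting graph is a triangle, which has minimum degree two (and indeed \emph{is} a legitimate small $\MM$-circuit, so no contradiction arises). You must specify that, when $f$ is not incident to $v$, the $m-1$ edges at $v$ are chosen so as to include at least one edge $vw$ with $w\notin\{x,y\}$; since $d_{F[S]}(v)\geq m$ and at most two neighbours of $v$ lie in $\{x,y\}$, such a $w$ exists whenever $m\geq 3$. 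With that choice the $m$-edge graph has $w$ as a leaf, so the contradiction goes through. (The paper's wording has the analogous gap in the phrase ``we could augment it to a forest in $H'$ of size $m$ that is not a star,'' which likewise needs a careful choice of the augmenting edges; so your proof is not worse off in this respect.)
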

More precisely, we shall see that if $m$ is the minimum number of edges in a small $\MM$-circuit with minimum degree one, then $\MM$ has the Whitney property with constant $c = \max(2m-2,4)$. 
\begin{proof}
    First, suppose that there are no graphs among the small $\MM$-circuits with minimum degree one; in particular, this means that there are no small $\MM$-circuits that are forests. We shall show that for every positive integer $k$, there exists a $k$-connected graph $G$ that is not $\MM$-reconstructible. 
    
    Let $\mathcal{C}$ denote the set of (isomorphism classes of) small $\MM$-circuits. It follows from \cref{theorem:turan} that there exists a $\mathcal{C}$-free graph $G'$ with $2|E(G')|/|V(G')| \geq 4k$, and by \cref{theorem:mader}, $G'$ has a $k$-connected subgraph $G$. Since $G$ does not contain small circuits of $\MM$, every $\MM$-circuit in $G$ has size $r(\MM) + 1$. Combining this with the fact that every graph on $r(\MM) + 1$ edges is $\MM$-dependent, we deduce that $\MM(G)$ is a uniform matroid. It follows that any permutation of $E(G)$ induces an automorphism of $\MM(G)$. On the other hand, not every permutation of $E(G)$ is induced by an automorphism of $G$ (unless $G$ is a star or a triangle, which is ruled out for $k \geq 3$), since a permutation induced by an automorphism must map incident edges to incident edges. This means that $G$ is not $\MM$-reconstructible, as required.

    Now let us suppose that there is a small $\MM$-circuit with minimum degree one. By \cref{lemma:smallcircuits}, there is a positive integer $m \geq 2$ such that the star $K_{1,m}$ is the unique small $\MM$-circuit with minimum degree one on at most $m$ edges.

    Let $c = \max(2m-2,4)$. We shall show that every $c$-connected graph is $\MM$-reconstructible. In fact, we show that minimum degree at least $c$ suffices. Let $G$ be a graph with minimum degree at least $c$, and suppose that $\psi: E(G) \to E(H)$ is an isomorphism between $\MM(G)$ and $\MM(H)$, for some graph $H$ without isolated vertices. Fix a vertex $v \in V(G)$, and consider the subgraph $H'$ of $H$ induced by $\psi(\partial_G(v))$. Since $|E(H')| = d_G(v) \geq 2m - 2$, \cref{lemma:mindegone} implies that $H'$ has a subgraph $C$ on $m$ edges and with minimum degree one. 
    
    Note that since $K_{1,m}$ is an $\MM$-circuit, every subset of $m$ edges in $\partial_G(v)$ induces an $\MM$-circuit, and thus every subset of $m$ edges in $H'$ induces an $\MM$-circuit. This means that $C$ is an $\MM$-circuit, and hence by the choice of $m$, $C$ is a star. In particular, $H'$ has a component of size at least $m+1$. 
    
    By the same reasoning, every forest of size $m$ in $H'$ is a star. This implies that $H'$ is a star, for if it contained a pair of disjoint edges, then we could augment it to a forest in $H'$ of size $m$ that is not a star. (It is easy to verify that apart from vertex stars, every graph on at least four edges has a pair of disjoint edges.)

    This shows that $\psi$ maps vertex stars to vertex stars. It follows by \cref{lemma:star} that $\psi$ is induced by a graph isomorphism, as desired.
\end{proof}

\subsection{Reconstructibility and matroid unions}\label{subsection:unionreconstructibility}

Using the results of the previous subsections, we can give a quick proof of \cref{theorem:unionWhitney}.
We will need the following recent result.

\begin{theorem}\cite[Theorem 1.6]{garamvolgyi.etal_2024a}\label{theorem:dconnpacking}
    For every pair of positive integers $k$ and $c$, there exists a positive integer $c'$ such that every $c'$-connected graph contains $k$ edge-disjoint $c$-connected spanning subgraphs.
\end{theorem}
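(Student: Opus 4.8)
The plan is to prove the statement by induction on $k$, peeling off one $c$-connected spanning subgraph at a time. The base case $k=1$ holds trivially with $c'=c$. For the inductive step, suppose the claim holds for $k-1$ with some constant $c'_{k-1} = c'_{k-1}(c)$. The heart of the matter is the following \emph{peeling lemma}: for every pair of positive integers $c$ and $m$ there is an $N = N(c,m)$ such that every $N$-connected graph $G$ contains a $c$-connected spanning subgraph $H$ for which $G - E(H)$ is still $m$-connected. Granting this, set $c'_k := N(c, c'_{k-1})$; given a $c'_k$-connected graph $G$, extract $H_1$ as in the peeling lemma, apply the inductive hypothesis to the $c'_{k-1}$-connected graph $G - E(H_1)$ to obtain $k-1$ further pairwise edge-disjoint $c$-connected spanning subgraphs inside it, and add $H_1$ to this collection.

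It remains to prove the peeling lemma. The \emph{edge}-connectivity analogue is easy, and already suggests the strategy: since an $N$-connected graph is $N$-edge-connected, the Nash--Williams--Tutte theorem yields $\lfloor N/2\rfloor$ pairwise edge-disjoint spanning trees $T_1,\dots,T_{\lfloor N/2\rfloor}$ of $G$; then $H_0 := T_1 \cup \dots \cup T_c$ is a $c$-edge-connected spanning subgraph, and its complement contains $T_{c+1},\dots,T_{\lfloor N/2\rfloor}$, hence is $(\lfloor N/2\rfloor - c)$-edge-connected. Two things are still needed to get the \emph{vertex}-connectivity version: we must upgrade $H_0$ to a spanning subgraph $H$ that is $c$-connected in the vertex sense (using only edges of $G$), and we must guarantee that $G - E(H)$ is $m$-connected in the vertex sense, not merely the edge sense.

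For the second point, observe that for any vertex set $S$ with $|S|\le m-1$ the graph $G-S$ is $(N-m+1)$-edge-connected, so every bipartition of $V(G)-S$ is crossed by at least $N-m+1$ edges; thus $G-E(H)-S$ can fail to be connected only if $H$ contains \emph{all} the crossing edges of some such bipartition, i.e. $H$ contains at least $N-m+1$ edges across some bipartition of some $G-S$. Hence it suffices to choose $H$ ``spread out'', containing at most (say) half the edges of $G$ across any bipartition of any $G-S$. The proposed way to arrange this is to choose $H$, among all $c$-connected spanning subgraphs of $G$, so as to lexicographically minimize its degree sequence, and then to rule out an over-saturated cut by a local exchange argument: an over-saturated cut would force a vertex of $H$ whose incident $H$-edges can be partially rerouted back into the leftover without destroying $c$-connectivity, contradicting the minimal choice of $H$. (Making $H$ exist and $c$-vertex-connected is automatic, since $G$ itself is $N\ge c$-connected.)

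The step I expect to be the main obstacle is exactly this interplay between vertex connectivity and balancedness. Vertex connectivity is not submodular, so the clean Nash--Williams--Tutte bookkeeping cannot be imitated directly; instead one has to control, via Menger's theorem and a fan- or ear-decomposition analysis, which edges of $H$ can possibly lie on a small vertex cut of $G$, and show that these can always be chosen so as to form a low-congestion structure, so that removing $H$ leaves every small vertex cut of $G$ ``unbroken''. An appealing alternative route, if it can be pushed through, is probabilistic: colour $E(G)$ with $k$ colours uniformly at random and argue that each colour class, being a random $\tfrac1k$-fraction of a highly connected graph, is $c$-connected with high probability; the difficulty there is again vertex-local, namely to show that no small vertex cut of $G$ is ``unlucky'' in any one colour.
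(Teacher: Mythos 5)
Your induction-on-$k$ reduction to the ``peeling lemma'' is sound as a reduction, but the peeling lemma is exactly where the entire difficulty of the theorem lives, and your sketch does not prove it. The lexicographic degree-sequence minimization controls the degrees of $H$, not its cuts: in the bad scenario $H$ contains all $N-m+1$ edges of $G$ crossing some bipartition of $G-S$, and these edges may be spread over many vertices so that no vertex of $H$ has unusually high degree; moreover, since \emph{every} $G$-edge across that cut already lies in $H$, there is no edge available to reroute ``back into the leftover'', so the proposed exchange has no move to make. The probabilistic alternative fails outright for a constant $c'$: take $G$ to be $N$-regular and $N$-connected with $N$ a constant; a fixed vertex sees no edge of a given colour with probability $(1-1/k)^N$, a positive constant, so with high probability linearly many vertices are isolated in some colour class --- uniform $\tfrac1k$-sparsification preserves connectivity only when degrees grow like $\log n$, which is incompatible with a constant $c'$. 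Finally, note that your peeling lemma already in the special case $c=1$ (a connected spanning subgraph whose removal keeps the graph $m$-connected) is Kriesell's long-standing spanning-tree problem, for which no elementary cut/exchange argument is known; it was resolved only by the rigidity-based method below. So the step you flag as ``the main obstacle'' is not a technical loose end but an open-problem-sized gap in the argument.

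For comparison, the paper does not prove this statement at all: it is quoted from \cite{garamvolgyi.etal_2024a}, where it is obtained by showing that the $k$-fold union of the family $\mathcal{R}_d$ of generic $d$-dimensional rigidity matroids has the Lovász--Yemini property (building on \cref{theorem:Soma}, Villányi's theorem that $d(d+1)$-connected graphs are $\mathcal{R}_d$-rigid). High connectivity then yields $k$ edge-disjoint spanning $\mathcal{R}_d$-rigid subgraphs (cf.\ the rank bookkeeping in \cref{lemma:unionthresholdbound}), and for $d\geq c$ each such subgraph is $c$-connected because rigid graphs on at least $d+1$ vertices are $d$-connected. In other words, the only known source of the vertex-connectivity of the pieces is rigidity; no Nash-Williams--Tutte-style or exchange-based bookkeeping is known to deliver it, which is precisely why your elementary route stalls where it does.
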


In fact, \cref{theorem:dconnpacking} is proved in~\cite{garamvolgyi.etal_2024a} by showing, using our terminology, that the $k$-fold union of the family of $d$-dimensional generic rigidity matroids has the Lovász-Yemini property. %

\begin{proof}[Proof of \cref{theorem:unionWhitney}]
    \emph{(a)}     By assumption, there exists a constant $c$ such that every $c$-connected graph is $\MM_i$-rigid for all $i \in \{1,\ldots,k\}$. It follows from \cref{theorem:dconnpacking} that there exist a constant $c'$ such that every $c'$-connected graph contains $k$ edge-disjoint spanning $c$-connected subgraphs. This also means that every $c'$-connected graph contains edge-disjoint $\MM_{i}$-rigid spanning subgraphs for $i \in \{1,\ldots,k\}$, and hence by \cref{lemma:unionthresholdbound}(a) that every $c'$-connected graph is $\MM$-rigid.

    \emph{(b)}
    First, let us suppose that there is an unbounded graph matroid family among $\MM_1,\ldots,\MM_k$, say $\MM_1$. 
    By \cref{theorem:main} and \cref{lemma:boundedLY}, each of $\MM_1,\ldots,\MM_k$ has the Lovász-Yemini property, and hence by %
    part \emph{(a)}, so does $\MM$. Moreover, since $\MM_1$ is unbounded, so is $\MM$. It follows from \cref{theorem:main} that $\MM$ has the Whitney property.

    Thus, we may assume that each of $\MM_1,\ldots,\MM_k$ is bounded. By \cref{theorem:boundedwhitney} and \cref{lemma:smallcircuits}, there exist positive integers $m_1,\ldots,m_k$ such that $K_{1,m_i}$ is the unique (small) $\MM_i$-circuit with at most $m_i$ edges that is a forest, for $i \in \{1,\ldots,k\}$. Let $m = \sum_{i =1}^k(m_{i}-1) + 1$. It is easy to verify now that $K_{1,m}$ is a small $\MM$-circuit. It follows from \cref{theorem:boundedwhitney} that $\MM$ has the Whitney property. 
\end{proof}

We note, without details, that for a nontrivial graph matroid family of dimensionality at least two, the de-coning operation (introduced at the end of \cref{subsection:operations}) also preserves the Lovász-Yemini and the Whitney properties.

\subsection{Extendability}\label{subsection:extendable}

Let us recall the following pair of definitions from the introduction. Given a graph $G$ and a positive integer $d$, the \emph{$d$-dimensional edge split operation} replaces an edge $uv$ of $G$ with
a new vertex joined to $u$ and $v$, as well as to $d-1$ other vertices of $G$. 
Let $\MM$ be a nontrivial, unbounded graph matroid family with dimensionality $d \geq 1$. We say that $\MM$ is \emph{$1$-extendable} if the $d$-dimensional edge split operation preserves $\MM$-independence. 

Nguyen defined $1$-extendability in the context of abstract rigidity matroids~\cite{nguyen_2010}. The naming comes from the fact that the $d$-dimensional edge-split operation is sometimes also called the $d$-dimensional $1$-extension operation.

Our goal in this subsection is to prove \cref{theorem:1extendable}, which says that $1$-extendable graph matroid families have the Lovász-Yemini property. This can be shown by adapting the proof of \cref{theorem:Soma} in~\cite{villanyi_2023}. Since we are not concerned with optimizing the connectivity constant $c$, we instead follow another, more straightforward argument, also due to Villányi~\cite{somaunpublished}. The same proof strategy was used in~\cite{garamvolgyi.etal_2024a} to establish the Lovász-Yemini property for unions of generic rigidity matroids.

Given a graph $G$ and a positive integer $k$, a set of vertices $U \subseteq V(G)$ is a \emph{$k$-dominating set} in $G$ if every vertex $v \in V(G) - U$ has at least $k$ neighbors in $U$ in $G$. It is well-known that high minimum degree forces the existence of small $k$-dominating sets, in the following sense.

\begin{theorem}(See, e.g.,~\cite[Corollary 1]{hansberg.volkmann_2009})\label{theorem:dominating}
    Let $k$ be a positive integer. For every $\varepsilon > 0$ there exists a positive integer $\delta$ such that every graph $G$ with minimum degree at least $\delta$ has a $k$-dominating set $U$ of size at most $\varepsilon |V(G)|$.
\end{theorem}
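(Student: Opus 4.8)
The plan is to use the probabilistic alteration method. Fix the integer $k$ and the target $\varepsilon > 0$; we may assume $\varepsilon < 1$. Set the sampling probability $p = \varepsilon/2$. The first step is to choose $\delta = \delta(k,\varepsilon)$. Let $\mathrm{Bin}(m,p)$ denote a binomial random variable with parameters $m$ and $p$. A routine estimate shows that $\Pr[\mathrm{Bin}(m,p) \le k-1] \le k\, m^{k-1}(1-p)^{m-k+1}$, which tends to $0$ as $m \to \infty$ because $k$ and $p$ are fixed and the exponential factor $(1-p)^m$ dominates the polynomial factor $m^{k-1}$. Moreover, writing $\mathrm{Bin}(m,p) = \mathrm{Bin}(m',p) + \mathrm{Bin}(m-m',p)$ for $m \ge m'$ gives a coupling showing that $\Pr[\mathrm{Bin}(m,p) \le k-1]$ is nonincreasing in $m$. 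Hence we may fix $\delta$ large enough that $\Pr[\mathrm{Bin}(m,p) \le k-1] \le \varepsilon/2$ for every $m \ge \delta$.

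Now let $G$ be an arbitrary graph with minimum degree at least $\delta$, and put $n = |V(G)|$. Form a random vertex set $X \subseteq V(G)$ by including each vertex independently with probability $p$, and let $Y$ be the (random) set of those $v \in V(G)$ that have fewer than $k$ neighbours inside $X$. By construction, every vertex of $V(G) - (X \cup Y)$ has at least $k$ neighbours in $X$, hence at least $k$ neighbours in $X \cup Y$, so $X \cup Y$ is a $k$-dominating set of $G$. For the size bound, note $\mathbb{E}[|X|] = pn$, and for each fixed $v$ the number of its neighbours in $X$ is distributed as $\mathrm{Bin}(d_G(v),p)$ with $d_G(v) \ge \delta$, so $\Pr[v \in Y] = \Pr[\mathrm{Bin}(d_G(v),p) \le k-1] \le \varepsilon/2$ by the choice of $\delta$; thus $\mathbb{E}[|Y|] \le (\varepsilon/2)n$. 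By linearity of expectation, $\mathbb{E}[|X \cup Y|] \le \mathbb{E}[|X|] + \mathbb{E}[|Y|] \le pn + (\varepsilon/2)n = \varepsilon n$. Since $|X \cup Y|$ is a nonnegative random variable with expectation at most $\varepsilon n$, some outcome satisfies $|X \cup Y| \le \varepsilon n$; the set $U = X \cup Y$ for that outcome is the desired $k$-dominating set.

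\textbf{Where the work is.} The argument is elementary; the only quantitative input is the binomial tail estimate that produces $\delta$ from $k$ and $\varepsilon$. The one point to handle with care is that the vertices of $G$ may have wildly different (large) degrees, so one must bound $\Pr[v \in Y]$ uniformly over all $d_G(v) \ge \delta$ — this is exactly what the monotonicity of the binomial tail in $m$ provides. (If one prefers to avoid randomness, the same conclusion follows from a greedy argument: repeatedly add to $U$ a vertex that newly $k$-dominates the largest number of vertices, and track the decrease of the potential $\sum_{v} \max(0,\, k - |N_G(v) \cap U|)$ — each step of the right kind reduces it by a $\Theta(1/n)$-fraction once $|U|$ is a small constant fraction of $V(G)$ — but the probabilistic proof above is shorter.) We note that this statement is not original and is used here only as a black box; a reference (e.g.\ \cite[Corollary 1]{hansberg.volkmann_2009}) is given above.
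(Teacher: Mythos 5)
Your proof is correct. Note, however, that the paper does not prove this statement at all: it is invoked as a known result, with a pointer to Hansberg and Volkmann, so there is no internal proof to compare yours against. Your probabilistic alteration argument (sample each vertex with probability $\varepsilon/2$, add the vertices with fewer than $k$ sampled neighbours, and bound the expected size using the binomial tail estimate together with monotonicity of the tail in the number of trials) is a standard and complete derivation of exactly the qualitative form the paper needs; the only hypotheses used are $d_G(v)\ge\delta$ for every vertex and the choice of $\delta$ from $k$ and $\varepsilon$, and the verification that $X\cup Y$ is $k$-dominating is immediate since vertices placed in $Y$ need not themselves be dominated. The cited reference proves a sharper quantitative statement — an explicit upper bound on the $k$-domination number in terms of the minimum degree, from which the form stated here follows by taking $\delta$ large — whereas your argument trades explicit constants (your $\delta$ is only defined implicitly by the tail inequality, though it could easily be made explicit from your bound $k\,m^{k-1}(1-p)^{m-k+1}$) for a short self-contained proof. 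Either route fully supports the way the theorem is used in the paper, namely to define $\delta_\MM$ in \cref{subsection:extendable}.
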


Let us fix a nontrivial, unbounded graph matroid family $\MM$ with dimensionality $\dimensionality{}$, threshold $\threshold{}$ and rank function $r$ for the remainder of this section. Let $\extendableconstant = r(K_{\threshold{}}) - d \cdot \threshold{}$, so that for every $n \geq \threshold{}$, $r(K_n) = dn + \extendableconstant$. Let $\varepsilon_\MM = \frac{1}{6d}$, and let $\delta_\MM$ be a positive integer such that every graph $G$ with minimum degree at least $\delta_\MM$ has a $\dimensionality{}$-dominating set of size at most $\varepsilon_\MM \cdot |V(G)|$. The existence of $\delta_\MM$ is guaranteed by \cref{theorem:dominating}. We may also assume that $\delta_\MM \geq \max(6\extendableconstant,\threshold{})$. 

The following is the main technical lemma in our proof of \cref{theorem:1extendable}.

\begin{lemma}\label{lemma:removablevertex}
    Let $\MM$ and $\delta_\MM$ be as in the above discussion and let $G = (V,E)$ be a graph. If the minimum degree of $G$ is at least $\delta_\MM$, then $G$ has a vertex $v_0 \in V$ such that $r(G) \leq r(G-v_0) + d$.
\end{lemma}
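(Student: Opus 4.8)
The plan is to produce an $\MM$-independent subgraph $H$ of $G$ that is a basis of $\MM(G)$ (so $|E(H)| = r(G)$) and that has a vertex $v_0$ of degree at most $d$. Given such an $H$, the subgraph $H - v_0$ of $G - v_0$ is $\MM$-independent and has at least $r(G) - d$ edges, hence $r(G - v_0) \geq r(G) - d$, which is exactly the claimed inequality. So everything reduces to producing this $H$.

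First I would invoke \cref{theorem:dominating}: since $G$ has minimum degree at least $\delta_\MM$, it has a $d$-dominating set $U$ with $|U| \leq \varepsilon_\MM |V| = \tfrac{|V|}{6d}$. Write $W = V \setminus U$ and $n = |V|$; since $\varepsilon_\MM < 1$ we have $W \neq \emptyset$, and since the minimum degree is at least $\delta_\MM$ we have $n \geq \delta_\MM + 1$. Next I would take a maximal $\MM$-independent subgraph $H_0$ of $G[U]$, so that $|E(H_0)| = r(G[U])$ and $E(H_0)$ spans $\MM(G[U])$, and build $H_1$ from $H_0$ by adding each vertex $w \in W$ together with exactly $d$ edges of $G$ joining $w$ to $U$ (possible because $U$ is $d$-dominating). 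By \cref{lemma:0extension}, applied once for each of these vertex additions, $H_1$ is an $\MM$-independent subgraph of $G$ with $|E(H_1)| = r(G[U]) + d|W|$. Finally I would extend $H_1$ to a basis $H$ of $\MM(G)$.

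The crucial point is that this last extension never needs a $U$--$U$ edge: since $\MM(G[U])$ is the restriction of $\MM(G)$ to $E(G[U])$ and $E(H_0)$ spans $\MM(G[U])$, every edge of $G[U]$ already lies in the closure of $E(H_0) \subseteq E(H_1)$ in $\MM(G)$, hence cannot be added to $H_1$ while keeping independence. So $H \setminus E(H_1)$ consists of exactly $s := r(G) - r(G[U]) - d|W|$ edges, each incident to at most two vertices of $W$. Thus at most $2s$ vertices of $W$ have degree exceeding $d$ in $H$, while every other vertex of $W$ still has precisely its $d$ edges to $U$, hence degree exactly $d$ in $H$. To finish I would check that $|W| - 2s > 0$: since $\MM(G)$ is a restriction of $\MM(K_n)$, \cref{lemma:ranklinear} gives $r(G) \leq r(K_n) = dn + \extendableconstant$ (valid as $n \geq \threshold{}$), so $s \leq d|U| + \extendableconstant \leq \tfrac{n}{6} + \extendableconstant$; combined with $|W| = n - |U| \geq n(1 - \tfrac{1}{6d}) \geq \tfrac{5n}{6}$ this yields $|W| - 2s \geq \tfrac{n}{2} - 2\extendableconstant > 0$, the positivity holding because $n \geq \delta_\MM + 1 \geq 6\extendableconstant + 1$. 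Any $v_0 \in W$ with $d_H(v_0) = d$ then works.

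I expect the main obstacle to be the middle paragraph: making rigorous, using only compatibility of $\MM$, that a basis extension of $H_1$ avoids all $U$--$U$ edges, so that the "rank deficit" $s$ of $H_1$ --- which is really the non-$\MM$-rigidity of $G[U]$ plus the slack $r(K_n) - r(G)$ --- can damage at most $2s$ of the many vertices of $W$. The remaining estimates are routine and are precisely what dictated the choices $\varepsilon_\MM = \tfrac{1}{6d}$ and $\delta_\MM \geq \max(6\extendableconstant, \threshold{})$; note in particular that this argument does not seem to use $1$-extendability of $\MM$, consistent with \cref{lemma:removablevertex} being stated for arbitrary nontrivial unbounded families.
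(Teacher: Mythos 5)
Your proof is correct and follows essentially the same strategy as the paper: find a $d$-dominating set $U$ via \cref{theorem:dominating}, build an $\MM$-independent spanning subgraph in which every vertex of $W = V \setminus U$ has degree exactly $d$ (by \cref{lemma:0extension}), extend to a basis, and count to show that the extension leaves some $W$-vertex untouched. The one small difference is that you seed your initial subgraph with a basis $H_0$ of $\MM(G[U])$ and then argue via a closure computation that the final extension can never add a $U$--$U$ edge, whereas the paper simply starts with the stars $E_0$ from $W$ to $U$ (no $U$--$U$ edges at all), lets the extension $E_1$ be arbitrary, and bounds $|E_1| < n/3$ directly; this sidesteps your closure argument entirely. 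In fact your closure observation, while correct, is not needed even in your own counting: each of the $s$ extension edges touches at most two vertices of $W$ whether or not it is a $U$--$U$ edge, so the bound ``at most $2s$ vertices of $W$ gain extra edges'' holds regardless. Both proofs are valid; the paper's is marginally leaner because it never examines the internal structure of $G[U]$.
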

\begin{proof}
    Let $n$ denote the number of vertices of $G$; note that $n \geq \delta_\MM + 1 > \max(6\extendableconstant,\threshold{}).$ By the definition of $\delta_\MM$, there exists a $\dimensionality{}$-dominating set $U \subseteq V$ in $G$ of size at most $\varepsilon_\MM \cdot n$. Observe that \[\varepsilon_\MM = \frac{1}{6d} = \frac{1}{3d} - \frac{1}{6d} < \frac{1}{3d} - \frac{\extendableconstant}{dn},\]where for the last inequality we used the assumption that $n > 6\extendableconstant$.
    
    Let us construct a subgraph $G_0 = (V,E_0)$ of $G$ by fixing $\dimensionality{}$ edges between $v$ and $U$, for every $v \in V - U$. It follows from \cref{lemma:0extension} that $G_0$ is $\MM$-independent. Thus we may extend $G_0$ to an $\MM$-independent subgraph $G_1 = (V,E_0 \cup E_1)$ of $G$ with $r(G_1) = r(G)$.

    Now \[|E_1| = r(G) - r(G_0) \leq dn + \extendableconstant - d(1 - \varepsilon_\MM)n = r_0 + dn \cdot \varepsilon_\MM < \extendableconstant + (\frac{n}{3} - \extendableconstant) = \frac{n}{3}.\]Hence there are at least $\frac{n}{3} > \varepsilon_\MM \cdot n$ vertices not incident to any edge in $E_1$. In particular, there is such a vertex $v_0 \in V - U$. By construction, this means that $\deg_{G_1}(v_0) = d$. It follows that \[r(G- v_0) \geq r(G_1) - d = r(G) - d,\] as claimed.
\end{proof}

\begin{proof}[Proof of \cref{theorem:1extendable}]
    Let $c = \delta_\MM$, where $\delta_\MM$ is defined as above, and let $G = (V,E)$ be a $c$-connected graph. We prove, by induction on $|V|$, that $G$ is $\MM$-rigid. This shows that $\MM$ has the Lovász-Yemini property with constant $c$. 

    In the base case, $G$ is a complete graph on $c+1$ vertices, which is trivially $\MM$-rigid. Let us thus assume that $|V| \geq c+2$. It follows from \cref{lemma:removablevertex} that $G$ has a vertex $v_0 \in V$ such that $r(G) \leq r(G-v_0) + d$. Note that $\delta_\MM \geq t$, and hence $d_G(v_0) \geq \delta_\MM \geq t \geq d+1$.

    We claim that for every pair of nonadjacent vertices $x,y \in N_G(v_0)$, we have $r(G-v_0) = r(G-v_0+xy)$. Indeed, otherwise we could take an $\MM$-independent subgraph $H$ of $G-v_0+xy$ that contains $xy$ and has $r(G-v_0) + 1$ edges. We can obtain an $\MM$-independent subgraph $H'$ of $G$ with $r(G-v_0) + d +1$ edges by performing a $\dimensionality{}$-dimensional edge split on $x$ and $y$, where the newly added vertex is $v_0$, and using the fact that $\MM$ is $1$-extendable. It follows that $r(G) \geq r(G-v_0) + d + 1$, contradicting the choice of $v_0$.
    
    Let $G' = G-v_0+K_{N_G(v_0)}$. The argument in the previous paragraph shows that $r(G-v_0) = r(G')$. Note that $G'$ is also $c$-connected, since it can be obtained from $G + K_{N_G(v_0)}$ by the deletion of $v_0$, and it is easy to see that deleting a vertex whose neighbors form a clique cannot destroy $c$-connectivity, unless the graph has only $c+1$ vertices. By induction, $G'$ is $\MM$-rigid, and hence so is $G-v_0$. Since $d_G(v_0) \geq c \geq d$, \cref{lemma:0extension} implies that $G$ is also $\MM$-rigid, as desired.
\end{proof}

Among the graph matroid families considered in \cref{subsection:operations}, the family of count matroids $\MM_{k,\ell}$, the families $\MM_{\CC}$ constructed by Schmidt, the family of generic rigidity matroids $\mathcal{R}_d$, and the family $\CC_d$ of generic $C^{d-1}_{d-2}$-cofactor matroids are all $1$-extendable. It is also not difficult to show that the operations of taking unions and de-coning both preserve $1$-extendability.

However, the family $\evencycle$ of even cycle matroids is not $1$-extendable, since applying the $1$-dimensional edge split operation (i.e., edge subdivision) to an odd cycle, which is $\evencycle$-independent, results in an even cycle, which is not. Also note that $\evencycle$ does not have the Lovász-Yemini property, since a bipartite graph on at least three vertices is not $\evencycle$-rigid, no matter how highly vertex-connected it is. It is unclear if there exists a nontrivial, unbounded graph matroid family that is not $1$-extendable, but does have the Lovász-Yemini property.

\section{Concluding remarks}\label{section:concluding}

We conclude by discussing potential extensions of the ideas considered throughout the paper.

\vspace{-.5em}

\paragraph*{Other properties.} Both the Lovász-Yemini and the Whitney properties are assertions about sufficiently highly vertex-connected graphs. 
We may also consider other one-parameter families of graph properties $\mathcal{P}(c)$ besides $c$-connectivity. (Here the range of the parameter $c$ is the natural numbers, and we assume that property $\mathcal{P}(c+1)$ implies property $\mathcal{P}(c)$, for every $c$.) Let us say that a graph matroid family $\MM$ \emph{has the Lovász-Yemini property with respect to $\mathcal{P}$} if there is some constant $c$ such that every graph satisfying $\mathcal{P}(c)$ is $\MM$-rigid. We also define the Whitney property w.r.t.\ $\mathcal{P}$ similarly. \cref{lemma:whitneytoLY} generalizes to this setting as follows. 

\begin{lemma}\label{lemma:whitneytoLYgeneral}
    Let $\mathcal{P}(c)$ be a one-parameter family of graph properties such that $\mathcal{P}(c)$ is edge-monotone for every nonnegative integer $c$. If an unbounded graph matroid family $\MM$ has the Whitney property w.r.t.\ $\mathcal{P}$, then it has the Lovász-Yemini property w.r.t.\ $\mathcal{P}$.
\end{lemma}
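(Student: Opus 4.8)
The plan is to follow the proof of \cref{lemma:whitneytoLY} almost verbatim, with the role of ``$c$-connected'' taken over by ``satisfies $\mathcal{P}(c)$'', and the role of ``adding an edge preserves $c$-connectivity'' taken over by the assumed edge-monotonicity of $\mathcal{P}(c)$. The key input, \cref{lemma:reconstructiblebridge}, holds for every unbounded graph matroid family, so it can be cited as is.

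In detail, suppose $\MM$ has the Whitney property w.r.t.\ $\mathcal{P}$ with constant $c$, so that every graph satisfying $\mathcal{P}(c)$ is $\MM$-reconstructible, and assume for a contradiction that some graph $G$ satisfying $\mathcal{P}(c)$ is not $\MM$-rigid. Then $r(G) < r(K_{V(G)})$, so the closure of $E(G)$ in $\MM(K_{V(G)})$ is a proper subset of $E(K_{V(G)})$; hence there is a pair of nonadjacent vertices $u,v$ of $G$ with $uv \notin \cl(E(G))$, i.e., with $r(G+uv) = r(G) + 1$, which is to say that $uv$ is a bridge in $\MM(G+uv)$. Since edgeless graphs are trivially $\MM$-rigid, $G$ has at least one edge, so $G+uv$ has at least two edges. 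By edge-monotonicity of $\mathcal{P}(c)$, the graph $G+uv$ also satisfies $\mathcal{P}(c)$, and is therefore $\MM$-reconstructible. Applying \cref{lemma:reconstructiblebridge} to $G+uv$ then forces $\MM(G+uv)$ to be bridgeless, contradicting the fact that $uv$ is a bridge.

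I do not anticipate any genuine obstacle: the argument is essentially a mechanical transcription of the proof of \cref{lemma:whitneytoLY}, and the only point requiring a line of justification is that a graph that is not $\MM$-rigid already contains a non-edge whose addition raises the rank, which is immediate from the closure characterization of matroid rank. Note that the hypothesis ``$\mathcal{P}(c+1)$ implies $\mathcal{P}(c)$'' plays no role in this direction; only edge-monotonicity of each individual $\mathcal{P}(c)$ is used.
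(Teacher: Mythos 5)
Your proposal is correct and is essentially the paper's own argument: the paper presents \cref{lemma:whitneytoLYgeneral} in the concluding remarks without a written proof, stating only that \cref{lemma:whitneytoLY} ``generalizes to this setting,'' and your transcription of that proof (with edge-monotonicity substituting for the fact that adding an edge preserves $c$-connectivity) is exactly what is intended. Your extra care in justifying that $G+uv$ has at least two edges, and in noting that only edge-monotonicity (not the nesting of the $\mathcal{P}(c)$) is used, are sound and slightly more explicit than the original.
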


\noindent It would be interesting to identify other properties $\mathcal{P} = \mathcal{P}(c)$ for which the converse holds.

\vspace{-.5em}

\paragraph*{Other settings.}

Viewed abstractly, a graph matroid family consists of a collection of matroids defined on some class of combinatorial objects (in this case, edge sets of finite simple graphs) that is invariant under certain maps (in this case, graph isomorphisms), as well as compatible, in the sense that the matroid assigned to a sub-object (in this case, a subgraph) is obtained as a restriction of the original matroid.

This blueprint can be easily adapted to other settings, such as
\begin{itemize}
    \item bipartite graph matroid families, where the objects are edge sets of finite bipartite graphs, and the morphisms are rooted isomorphisms (graph isomorphisms that preserve the color classes);
    \item multigraph matroid families, where the objects are edge sets of finite multigraphs, and the morphisms are graph isomorphisms;
    \item $k$-hypergraph matroid families, where the objects are edge sets of $k$-uniform hypergraphs, and the morphisms are hypergraph isomorphisms.
\end{itemize}

These models can capture additional matroid families appearing in the literature, such as bipartite rigidity matroids~\cite{kalai.etal_2015}, count matroids on hypergraphs~\cite{whiteley_1996}, or low-rank tensor completion matroids~\cite{cruickshank.etal_2023}.
It would be interesting to investigate whether some analogues of \cref{theorem:main,theorem:unionWhitney,theorem:1extendable} hold in these settings. 

In this direction, it was recently shown~\cite{garamvolgyi.etal_2025a} that the family of \emph{generic $(a,b)$-birigidity matroids}, defined on the edge sets of bipartite graphs, has the Lovász-Yemini-property, in the sense that every sufficiently highly connected bipartite graph has the same rank in these matroids as the complete bipartite graph on the same bipartition. More generally, following the methods of~\cite{garamvolgyi.etal_2025a} it can be shown that every bipartite graph matroid family in which the so-called \emph{double $1$-extension} operation preserves independence has the Lovász-Yemini property. We suspect that these families also have the (bipartite version of the) Whitney property. See~\cite{kiraly.etal_2013} for basic results about the structure of bipartite graph matroid families.

We note that $k$-hypergraph matroid families were investigated by Kalai~\cite{kalai_1990} under the name of symmetric $k$-matroids. Given such a family of matroids $\MM$, the \emph{growth function} of $\MM$ is the function mapping each positive integer $n$ to the $\MM$-rank of the complete $k$-uniform hypergraph on $n$ vertices. Kalai gave a characterization of growth functions of $k$-hypergraph matroid families. In particular, he proved a generalization of \cref{lemma:ranklinear}, saying that the growth function of $\MM$ is eventually polynomial, for any $\MM$.

\vspace{-.5em}

\paragraph*{Reconstruction from matroids.}

As noted in the introduction, throughout the paper, the term $\MM$-reconstructibility is used in the sense that the problem of reconstructing $G$ from $\MM(G)$ has a unique solution, up to isomorphism.~\cref{theorem:main,theorem:unionWhitney,theorem:1extendable} provide many examples of graph matroid families $\MM$ for which every sufficiently highly connected graph is $\MM$-reconstructible in this sense. 

Nonetheless, even if $G$ is $\MM$-reconstructible, it is unclear whether we can actually construct $G$ from $\MM(G)$ efficiently (e.g., using a polynomial number of $\MM$-independence oracle calls). Finding such reconstruction algorithms for particular graph matroid families remains an interesting open problem. It seems that such an algorithm is only available for the family of graphic matroids~\cite{seymour_1981}.

\section*{Acknowledgements}

I am grateful to the anonymous referees for their careful reading of the manuscript.
I would also like to thank Tibor Jordán, Csaba Király, and Soma Villányi for useful discussions.

This research was supported by the Lendület Programme of the Hungarian Academy of Sciences (grant number LP2021-1/2021), the National Research, Development and Innovation Fund of Hungary, financed under the ELTE TKP 2021-NKTA-62 funding scheme, and the NKFIH Advanced grant no.\ 152786.

\hypersetup{linkcolor=blue}

\printbibliography

\end{document}